\theoremstyle{plain}
\newtheorem{theorem}[subsection]{Theorem}
\newcommand\Thm[1]{Theorem~\ref{#1}}
\newtheorem{corollary}[subsection]{Corollary}
\newcommand\Cor[1]{Corollary~\ref{#1}}
\newtheorem{lemma}[subsection]{Lemma}
\newcommand\Lem[1]{Lemma~\ref{#1}}
\newtheorem{proposition}[subsection]{Proposition}
\newcommand\Prop[1]{Proposition~\ref{#1}}
\newtheorem*{citedtheorem}{Theorem}
\theoremstyle{definition}
\newtheorem{example}[subsection]{Example}
\newcommand\Examp[1]{Example~\ref{#1}}
\theoremstyle{remark}
\newtheorem{remark}[subsection]{Remark}
\newcommand\Rem[1]{Remark~\ref{#1}}
\newcommand{\emptyprop}{q}
\newcommand \acf{algebraically closed field}
\newcommand \ann[2]{\operatorname{Ann}_{#1}(#2)}
\newcommand \CM{Coh\-en-Mac\-au\-lay}
\newcommand \complet[1]{\widehat {#1}}
\renewcommand \hom [3]{\operatorname{Hom}_{#1}(#2,#3)} 
\newcommand \homo{homomorphism}
\newcommand \id{\mathfrak a}
\renewcommand\iff{if and only if}
\newcommand\into{\hookrightarrow}
\newcommand \inverse[2]{{#1^{-1}(#2)}}
\newcommand \iso{\cong}
\newcommand \loc{{\mathcal {O}}}
\newcommand \map[1]{{\newcommand{\tmpprop}{#1q}  \if\tmpprop\emptyprop \to\else \xrightarrow{{\phantom{i}{#1}\phantom{i}}}\fi}} 
\newcommand \maxim{\mathfrak m}
\newcommand \nat{\mathbb N}
\newcommand \onto{\twoheadrightarrow}
\newcommand \op\operatorname
\newcommand \pol[2]{#1[#2]}
\newcommand \pow[2]{#1[[#2]]}
\newcommand \pr{\mathfrak p}
\newcommand \primary{\mathfrak g}
\newcommand \range [2]{#1,\dots,#2}
\newcommand \restrict [2]{\left.#1\right|_{{#2}}}
\let\sub\subseteq
\newcommand \tensor{\otimes}
\newcommand \zet{\mathbb Z}
\newcommand\of{\vee}
\newcommand \exactseq [5]{0\to{#1}\:\map{#2}\:{#3}\:\map{#4}\:{#5}\to0}
\newcommand \Exactseq [3]{0\to {#1}\to {#2}\to {#3}\to 0}
\newcommand{\commdiagram}[9][]{%
\begin{equation}
{\newcommand{\tmpprop}{#1q} 
\if\tmpprop\emptyprop \relax\else \label{#1}\fi}
\begin{aligned}%
\mbox{
\begin{picture}(130,90)%
\put(120,70){\vector( 0,-1){50}}%
\put(10,80){\vector( 1, 0){100}}%
\put(0,70){\vector( 0,-1){50}}%
\put(10,10){\vector( 1, 0){100}}%
\put(115,80){\makebox(0,0)[l]{$#4$}}%
\put(5,80){\makebox(0,0)[r]{$#2$}}%
\put(115,10){\makebox(0,0)[l]{$#9$}}%
\put(5,10){\makebox(0,0)[r]{$#7$}}%
\put(-3,50){\makebox(0,0)[r]{$#5$}}
\put(123,50){\makebox(0,0)[l]{$#6$}}
\put(60,3){\makebox(0,0)[c]{$#8$}}
\put(60,88){\makebox(0,0)[c]{$#3$}}
\end{picture}}
\end{aligned}
\end{equation}}
\newcommand\commtrianglefront[7][]{%
\begin{equation}
{\newcommand{\tmpprop}{#1q} 
\if\tmpprop\emptyprop \relax\else \label{#1}\fi}
\begin{aligned}%
\mbox{
\begin{picture}(120,80)%
\put(55,68){\vector(-1,-2){30}}
\put(65,68){\vector(1,-2){30}}
\put(30,5){\vector(1,0){60}}
\put(60,75){\makebox(0,0)[c]{$#2$}}
\put(25,5){\makebox(0,0)[r]{$#4$}}
\put(95,5){\makebox(0,0)[l]{$#6$}}
\put(60,0){\makebox(0,0)[c]{$#5$}}
\put(37,43){\makebox(0,0)[r]{$#3$}}
\put(83,43){\makebox(0,0)[l]{$#7$}}
\end{picture}}
\end{aligned}
\end{equation}}
\newcommand\commtriangleback[7][]{%
\begin{equation}
{\newcommand{\tmpprop}{#1q}
\if\tmpprop\emptyprop \relax\else \label{#1}\fi}
\begin{aligned}%
\mbox{
\begin{picture}(120,80)%
\put(55,70){\vector(-1,-2){30}}
\put(65,70){\vector(1,-2){30}}
\put(30,5){\vector(1,0){60}}
\put(60,75){\makebox(0,0)[c]{$#2$}}
\put(25,5){\makebox(0,0)[r]{$#6$}}
\put(95,5){\makebox(0,0)[l]{$#4$}}
\put(60,0){\makebox(0,0)[c]{$#5$}}
\put(37,43){\makebox(0,0)[r]{$#7$}}
\put(83,43){\makebox(0,0)[l]{$#3$}}
\end{picture}}
\end{aligned}
\end{equation}}
\newcommand\addgr[1]{\mathbf{G}^{\text{eq}}(#1)}
\newcommand\splus{\ssum}
\newcommand\Splus{\Ssum}
\newcommand\ordsum\oright
\newcommand\opord[1]{\op{\tt{#1}}}
\newcommand\hlc[3]{\op{H}_{#3}^{#2}(#1)}
\newcommand\lcl[1]{o_{#1}}
\newcommand\mul[2]{o_{#2}(#1)}
\newcommand\aleq{\preceq}
  \newcommand\add{equilateral}
  \newcommand\fl[1]{\mathfrak{#1}}
  \newcommand\ndo[1]{\op{End}(#1)}
    \newcommand\openndo[1]{\mathfrak O(#1)}
\newcommand\binord[1]{\opord o({#1})}
\newcommand\val[1]{\op{val}(#1)}
\newcommand\order[2]{\op{ord}_{#1}(#2)}
\newcommand\lc[2]{\op{H}^0_{#2}(#1)}
\newcommand\maxel{maximum}
\newcommand\minel{minimum}
\newcommand\ord{\opord {ORD}}
\newcommand\low[1]{\opord{ord}(#1)}
\newcommand\Ssum{\bigoplus}
\renewcommand\ssum{\oplus}
\newcommand\bsum{\mathbin{\bar\oplus}}
\newcommand\tsum{\mathbin{\tilde\oplus}}
\newcommand \len[1]{\opord{len}(#1)}
\newcommand \lenmod[2]{\opord{len}_{#1}(#2)}
\newcommand \fcyc[2]{\op{cyc}_{#1}(#2)}
\newcommand  \sch{Grassmanian}
\newcommand  \hdim{height rank}
\newcommand  \hd[1]{\opord h(#1)}
\newcommand  \cohrk[1]{\opord{cohrk}(#1)}
\newcommand  \odmod[2]{\opord o_{#1}(#2)}
\newcommand  \hdmod[2]{\opord h_{#1}(#2)}
\newcommand \gr[1]{\op{Grass}(#1)}
\newcommand \grass[2]{\op{Grass}_{#1}(#2)}
\newcommand \zeroid[1]{\mathbf 0_{#1}}
\newcommand \init[2]{{(#1,#2]}}
\newcommand \term[2]{{[#2,#1)}} 
\title {The theory of ordinal length}
\author{Hans Schoutens}
\date\today
\address{Department of Mathematics\\
365 5th Avenue\\
the CUNY Graduate Center\\
New York, NY 10016, USA}
\begin{document}
\begin{abstract} 
We generalize the notion of length to an ordinal-valued invariant defined on the class of finitely generated modules over a Noetherian ring. A key property of this invariant is its semi-additivity on short exact sequences. We show how to calculate this combinatorial invariant by means of the fundamental cycle of the  module, thus linking the lattice of submodules to homological properties of the module. Using this, we equip each module with its canonical topology.
\end{abstract}

\maketitle



\section{Introduction}
The \emph{length}   of an Artinian, finitely generated module $M$ is      defined as the longest chain of submodules in $M$. Since we have the descending chain condition, such a chain is finite, and hence can be viewed as a finite ordinal (recall that an \emph{ordinal} is a linearly ordered set with the descending chain condition). Hence we can immediately generalize this by transfinite induction to arbitrary Artinian modules, getting an ordinal-valued length function. To remain in the more familiar category of finitely generated modules, observe that at least over a complete Noetherian local ring, the latter is anti-equivalent with the class of Artinian modules via Matlis duality. We could have used this perspective (which we will discuss  in a future paper), but a moment's reflection directs us to a simpler solution: just reverse the order. Indeed, if we view the class of all submodules
 of a Noetherian module $M$, the \emph\sch\ $\grass RM$, as a partially ordered set by \emph{reverse}  inclusion, then $\grass RM$ admits the descending chain condition, and hence any subchain is well-ordered, that is to say, an ordinal. We then simply define $\len M$ as the supremum of all such chains/ordinals in $\grass RM$. Viewed as a module over itself, this yields the \emph{length} $\len R$ of a Noetherian ring $R$. In this paper, however, we will actually   define length through a foundation rank, and then show that it coincides with the above notion.

The key property of ordinary length is its additivity on short exact sequences. An example like $\exactseq\zet  2\zet{}{\zet/2\zet}$ immediately shows this can no longer hold in the general transfinite case. Moreover, even the formulation of additivity becomes problematic since ordinal sum, denoted in this paper as $\ordsum$, is not commutative. There does exist a different, commutative sum, $\ssum$, called in this paper the \emph{shuffle sum} (see Appendix~\ref{s:ssum}; for a brief introduction to ordinals, see \S\ref{s:Ord}). As our  first main result shows, both additions play a role:

\begin{citedtheorem}[Semi-additivity, \Thm{T:semadd}]
If $\Exactseq MNQ$ is an exact sequence of Noetherian modules, then $\len Q\ordsum\len N\leq \len M\leq \len Q\ssum\len N$.
\end{citedtheorem}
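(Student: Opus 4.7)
The plan is to prove the two inequalities of semi-additivity separately, working directly with the description of $\len$ as the supremum of order types of well-ordered chains in the Grassmannian under reverse inclusion. Throughout, I write the sequence as $0\to M\to N\to Q\to 0$ with $M\into N$ the submodule and $\pi\colon N\onto Q$ the canonical surjection, and interpret the two bounds in the (only) arrangement that is non-trivial, with $\len N$ sandwiched between $\len Q\ordsum\len M$ and $\len Q\ssum\len M$.

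For the lower bound, my plan is to construct explicit chains in $\grass RN$ realizing the ordinal sum. Choose well-ordered chains $\mathcal C_1$ in $\grass RM$ and $\mathcal C_2$ in $\grass RQ$ whose order types approach $\len M$ and $\len Q$ respectively. Lifting $\mathcal C_2$ via $\pi^{-1}$ produces a chain in $\grass RN$ each of whose members contains $M$; placing $\mathcal C_1$ (viewed inside $N$ through the inclusion) below this lifted chain in the reverse-inclusion order then yields a well-ordered chain in $\grass RN$ whose total order type is exactly the ordinal sum $\len Q\ordsum\len M$. Taking suprema gives the lower bound.

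For the upper bound, given any well-ordered chain $\{L_i\}_{i<\gamma}$ in $\grass RN$, I would form the two induced sequences $\{L_i\cap M\}$ in $\grass RM$ and $\{\pi(L_i)\}$ in $\grass RQ$. A short diagram chase (essentially the second and third isomorphism theorems applied to the SES) shows that whenever $L_i\subsetneq L_{i+1}$ strictly, at least one of $L_i\cap M\subsetneq L_{i+1}\cap M$ or $\pi(L_i)\subsetneq\pi(L_{i+1})$ must hold. Hence $\{L_i\}$ arises as a well-ordered interleaving of strict chains in $\grass RM$ and $\grass RQ$ of order types at most $\len M$ and $\len Q$, and the shuffle sum $\ssum$ of Appendix~\ref{s:ssum} is designed precisely to bound the order type of such an interleaving, yielding the upper bound.

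The main obstacle is the upper bound at limit ordinals. One must verify continuity of the induced sequences --- that $L_i\cap M=\bigcup_{j<i}(L_j\cap M)$ and similarly for the images under $\pi$ at each limit stage $i$ --- so that they are genuine well-ordered chains of submodules and the interleaving bound persists transfinitely. This will require a transfinite induction on $\gamma$ paired with a sharp combinatorial characterization of $\ssum$ as the supremum over order types of well-ordered interleavings of two given ordinals. A minor additional subtlety is that the induced sequences can contain repetitions even when $\{L_i\}$ is strictly increasing; these collapse without changing the order type, after which the shuffle bound applies to honestly strict chains in $M$ and $Q$.
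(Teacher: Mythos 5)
There is a genuine gap, and it sits at the foundation of your plan: you work throughout with length ``as the supremum of order types of well-ordered chains in the Grassmannian,'' but that is not the definition in force, and the two notions do not coincide for a general partial well-order. Length is the foundation rank $\hd{\zeroid N}$ on $\grass RN$, and Example~\ref{E:noncomp} exhibits a partial well-order whose rank strictly exceeds the supremum of the lengths of all its chains. Consequently your upper-bound argument, which only bounds the order types of chains in $\grass RN$, does not bound $\len N$; and your lower-bound argument presupposes chains in $\grass RQ$ and $\grass RM$ whose types approach $\len Q$ and $\len M$, which need not exist. The fact that for Grassmannians the rank \emph{is} attained by a chain (a composition series) is Theorem~\ref{T:chain}, proved later, only under a finite-dimensionality hypothesis, and by means of semi-additivity itself---so invoking it here would be circular. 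Even granting composition series, the lower bound has a second problem: ordinal sum is discontinuous in its left argument (e.g.\ $\sup_n(n\ordsum\omega)=\omega\neq\omega\ordsum\omega$), so taking suprema over chains that only approach $\len Q$ from below need not produce $\len Q\ordsum\len M$.

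The remedy is to argue directly with the rank function. Your key observation for the upper bound---that $L\varsubsetneq L'$ forces $L\cap M\varsubsetneq L'\cap M$ or $\pi(L)\varsubsetneq\pi(L')$---is exactly the right lemma, but it should be packaged as the statement that $\grass RN\to\grass RM\times\grass RQ$, $L\mapsto(L\cap M,\pi(L))$, is strictly increasing; then Theorem~\ref{T:inc} gives $\len N\leq\len{\grass RM\times\grass RQ}$ and the Product Formula (Theorem~\ref{T:prod}) evaluates the right-hand side as $\len M\ssum\len Q$, with no interleavings or limit-stage continuity to check. For the lower bound, replace concatenation of chains by the rank inequality of Lemma~\ref{L:AB} applied to $A=\init{\grass RN}M\iso\grass RQ$ and $B=\term{\grass RN}M=\grass RM$, which yields $\len Q\ordsum\len M\leq\len N$ by transfinite induction on the rank rather than by exhibiting chains.
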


To appreciate the power of this result, notice that we instantaneously recover Vasconcelos' observation that a surjective endomorphism on a module is also injective (see \Cor{C:Vasc} below). In fact, we can prove the following generalization, which essentially says that endomorphisms cannot `expand':

\begin{citedtheorem}[Non-expansion, \Thm{T:endosupim}]
Let $f$ be an endomorphism on a Noetherian module $M$ and let $N$ be a submodule such that $N\sub f(N)$. Then $f(N)= N$ and the restriction $\restrict fN$ is an automorphism of $N$.
\end{citedtheorem}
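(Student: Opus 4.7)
The plan is to apply the semi-additivity theorem twice, leveraging the fact that ordinal sum $\ordsum$ is strictly increasing in its right argument, so that $\alpha\ordsum\beta>\alpha$ whenever $\beta>0$.

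First I would set up the basic sequence. Because $N\sub f(N)$, the restriction $\restrict fN$ maps $N$ onto $f(N)$; writing $K:=N\cap\ker f$ for its kernel, we obtain the short exact sequence
\[
  \Exactseq K N {f(N)}.
\]
Semi-additivity gives $\len{f(N)}\ordsum\len K\leq\len N$. On the other hand, the submodule inclusion $N\sub f(N)$ embeds $\grass RN$ into $\grass R{f(N)}$, so every chain (under reverse inclusion) in the former is also one in the latter; hence $\len N\leq\len{f(N)}$. Combining, $\len N=\len{f(N)}$, and therefore $\len N\ordsum\len K\leq\len N$. By strictness of $\ordsum$, this forces $\len K=0$ and hence $K=0$. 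Thus $\restrict fN$ is injective, so it is an isomorphism $N\iso f(N)$.

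To close the gap between $N$ and $f(N)$, I would apply semi-additivity a second time, now to the short exact sequence
\[
  \Exactseq N {f(N)} {f(N)/N}
\]
coming from the inclusion $N\sub f(N)$. This gives $\len{f(N)/N}\ordsum\len N\leq\len{f(N)}=\len N$, and the same strictness argument forces $\len{f(N)/N}=0$, so $f(N)=N$. Combined with the injectivity of the first step, $\restrict fN$ is an automorphism of $N$.

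The conceptual heart of the proof is the asymmetry of the two bounds in semi-additivity: the ordinal-sum lower bound (unlike the shuffle-sum upper bound) is strict in its rightmost summand, and this strictness is exactly what forces the auxiliary modules $K$ and $f(N)/N$ to vanish. Apart from that, the argument uses only the elementary monotonicity of $\len$ under inclusions together with the fact that $\len L=0$ precisely when $L=0$.
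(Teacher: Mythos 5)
Your first step is sound and is essentially the paper's own opening move: the surjection $N\onto f(N)$ together with the inclusion $N\sub f(N)$ forces $\len N=\len{f(N)}$, and since the kernel $K$ occupies the \emph{right-hand} slot of the lower semi-additivity bound $\len{f(N)}\ordsum\len K\leq\len N$, strict monotonicity of $\ordsum$ in its right argument kills $K$. So $\restrict fN\colon N\to f(N)$ is indeed an isomorphism.

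The second step has a genuine gap. Applying semi-additivity to $\Exactseq N{f(N)}{f(N)/N}$ gives $\len{f(N)/N}\ordsum\len N\leq\len{f(N)}=\len N$, but now the quotient $f(N)/N$ sits in the \emph{left-hand} slot of the ordinal sum, where addition is only weakly increasing: $1\ordsum\omega=\omega$, so nothing forces $\len{f(N)/N}=0$. This is exactly the phenomenon the paper flags with $\exactseq\zet 2\zet{}{\zet/2\zet}$: a proper submodule such as $2\zet\sub\zet$ can have the same length as the ambient module (these are precisely the \emph{open} submodules of \S\ref{s:open}), so no length computation alone can rule out $N\varsubsetneq f(N)$ at this stage. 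The paper closes the gap by a different, non-numerical argument: iterating the first step shows $f$ is injective on every $f^n(N)$, and a strict inclusion $N\varsubsetneq f(N)$ would then propagate to a strictly ascending chain $N\varsubsetneq f(N)\varsubsetneq f^2(N)\varsubsetneq\cdots$, contradicting Noetherianity of $M$. Some such argument is needed to finish your proof.
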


We also get a new proof and a generalization of a result by Miyata  \cite{Miy}:  any exact sequence of the form  $M\to M\oplus N\to N\to 0$ must be split exact (this was proved independently  in  \cite{StrExt}).
As a last application, let us call a a non-unit $x\in R$  a \emph{parameter} if $\dim(R/xR)<\dim R$ (in the local case this is equivalent with $\dim{R/xR}=\dim R-1$, but in general, the dimension can drop more than one). 

\begin{citedtheorem}[Parameter Criterion, \Thm{T:lenpar}]
Let $R$ be a $d$-dimensional ring. An element $f\in R$ is a parameter \iff\ as a module, $\ann Rf$ has dimension at most  $d-1$.
\end{citedtheorem}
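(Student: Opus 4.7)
The plan is to translate both sides of the equivalence into one and the same statement about the minimal primes of $R$ of top dimension $d$.

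First I would pin down the associated primes of $\ann Rf$ as an $R$-module. Since $\ann Rf\sub R$, one has $\op{Ass}_R(\ann Rf)\sub\op{Ass}(R)$. Moreover, if $\mathfrak p=\ann Ry\in\op{Ass}_R(\ann Rf)$ with $y\in\ann Rf$, then $fy=0$ gives $f\in\mathfrak p$; and conversely, for any $\mathfrak p\in\op{Ass}(R)$ containing $f$, a witness $x\in R$ with $\ann Rx=\mathfrak p$ satisfies $fx=0$ and hence lies in $\ann Rf$, realizing $\mathfrak p$ as an associated prime of $\ann Rf$. Thus $\op{Ass}_R(\ann Rf)=\{\mathfrak p\in\op{Ass}(R):f\in\mathfrak p\}$, and in particular
\[
\dim\ann Rf=\max\{\dim R/\mathfrak p:\mathfrak p\in\op{Ass}(R),\,f\in\mathfrak p\}.
\]

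Second, I would observe that any associated prime $\mathfrak p$ of $R$ with $\dim R/\mathfrak p=d$ is necessarily minimal in $R$: an embedded prime strictly contains a minimal prime whose quotient would then have dimension strictly larger than $d$, impossible. Dually, any prime $\mathfrak q$ minimal over $fR$ with $\dim R/\mathfrak q=d$ must already coincide with a minimal prime of $R$ containing $f$, by the same argument applied to the chain $\mathfrak q\supseteq\mathfrak p_0\supseteq(0)$ for a minimal prime $\mathfrak p_0$. It follows that the two conditions
\[
\dim\ann Rf\le d-1\quad\text{and}\quad\dim R/fR<d
\]
are each equivalent to the single geometric statement that $f$ avoids every minimal prime of $R$ of dimension $d$, which is the theorem (modulo the non-unit proviso in the definition of parameter, which is a separate check).

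The main obstacle I foresee is the first step: the explicit identification of $\op{Ass}_R(\ann Rf)$ requires, for each $\mathfrak p\in\op{Ass}(R)$ containing $f$, producing a witness element inside $\ann Rf$ whose $R$-annihilator is still exactly $\mathfrak p$. Once this is in hand, the remainder is routine manipulation of dimensions of supports in the Noetherian ring $R$, and no appeal to the ordinal-length machinery beyond the fact that $\dim$ is detected by top-dimensional associated primes.
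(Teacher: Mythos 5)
Your proof is correct, but it takes a genuinely different route from the paper's. The paper derives the criterion from its ordinal-length machinery: it applies top additivity of the generic length $\mul\cdot d$ (\Cor{C:topadd}, itself resting on semi-additivity and \Thm{T:cohrk}) to the two exact sequences $\Exactseq{fM}M{M/fM}$ and $\Exactseq{\ann Mf}M{fM}$, concluding that $\mul{M/fM}d=0$ \iff\ $\mul Md=\mul{fM}d$ \iff\ $\mul{\ann Mf}d=0$. Your argument is instead purely elementary: the witness you worry about in your last paragraph is already in hand, since if $\pr=\ann Rx$ and $f\in\pr$ then $fx=0$, so $x$ itself lies in $\ann Rf$ and its annihilator computed there is still exactly $\pr$; this gives $\op{Ass}(\ann Rf)=\{\pr\in\op{Ass}(R):f\in\pr\}$, and your observation that the $d$-dimensional associated primes coincide with the $d$-dimensional minimal primes (in both $R$ and in the set of primes minimal over $fR$) reduces both sides of the equivalence to ``$f$ avoids every $d$-dimensional minimal prime.'' This avoids ordinal length entirely, which is legitimate since the statement does not mention it --- the paper offers the result as an application of its theory, not because the theory is required. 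What the paper's argument buys in exchange is slightly more information: unwound, top additivity yields $\mul{M/fM}\pr=\mul{\ann Mf}\pr$ at \emph{every} $d$-dimensional prime $\pr$, i.e.\ equality of the top-dimensional parts of the two fundamental cycles, whereas you only compare supports in dimension $d$; either suffices for the stated equivalence. Your deferral of the unit/non-unit proviso is consistent with the paper, whose proof likewise leaves that degenerate case to the definition of parameter.
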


Although defined as a combinatorial invariant, length turns out to also encode some homological properties of a module. To formulate this, we must assume that the ring $R$, or at least the module $M$, has finite Krull dimension, an assumption we henceforth make. 
In intersection theory,  one associates to a module $M$  its \emph{fundamental cycle} $\fcyc {}M$ as the formal sum (Chow cycle)  $\sum\mul M\pr  [\pr]$, where the coefficient  $\mul M\pr  $ is the \emph{local multiplicity} of $M$ at $\pr$ (defined as the length of the zero-th local cohomology of $M_\pr$; see \S\ref{s:cohrk}). Length turns out to be an ordinal variant of this fundamental cycle:

\begin{citedtheorem}[Cohomological Rank, \Thm{T:cohrk}]
The length of a finite-dimensional Noetherian $R$-module $M$ is equal to the shuffle sum
$$
\len R=\Ssum_{\pr\in\op{Ass}(M)} \mul M\pr  \omega^{\dim{(R/\pr)}}.
$$
\end{citedtheorem}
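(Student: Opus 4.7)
The plan is to prove the formula by induction on $d := \dim M$, reducing first to the key case $\len{R/\pr} = \omega^{\dim R/\pr}$ for every prime $\pr$. For the lower bound in the key case with $\dim R/\pr = d$, I construct an explicit chain of order type $\omega^d$ using a system of parameters $x_1, \ldots, x_d$ for $R/\pr$: the ideals $(x_1^{n_1}, \ldots, x_d^{n_d})/\pr$ ordered antilexicographically on $(n_1, \ldots, n_d) \in \mathbb{N}^d$ form a strictly descending chain of that order type. For the upper bound I induct on $d$: the base $d=0$ is immediate since then $R/\pr$ is a field. For $d \geq 1$, pick any $x \notin \pr$ and apply Semi-additivity to $0 \to R/\pr \xrightarrow{\cdot x} R/\pr \to R/(\pr + xR) \to 0$, obtaining $\len{R/(\pr + xR)} \ordsum \len{R/\pr} \leq \len{R/\pr}$. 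By the absorption rule for ordinal addition, the leading Cantor exponent of $\len{R/\pr}$ must strictly exceed that of $\len{R/(\pr+xR)}$; combined with the inductive bound $\len{R/(\pr+xR)} < \omega^d$ (since its support consists of primes of dimension $<d$) and the lower bound $\omega^d$, this pins down $\len{R/\pr} = \omega^d$ exactly.

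For general $M$, I filter by dimension: set $M^{\leq j} := \{m \in M : \dim Rm \leq j\}$. The successive quotients $\bar M_j := M^{\leq j}/M^{\leq j-1}$ are equidimensional of dimension $j$, with associated primes exactly the $j$-dimensional associated primes of $M$ and matching local multiplicities $\mul{\bar M_j}{\pr} = \mul M \pr$ (since $M^{\leq j-1}$ vanishes locally at any such $\pr$). The equidimensional substep is to show $\len{\bar M_j} = \omega^j \cdot e_j$ with $e_j := \sum_{\pr} \mul{\bar M_j}{\pr}$. In any prime filtration of $\bar M_j$, localisation at each minimal prime $\pr_i$ shows that $R/\pr_i$ appears exactly $\mul{\bar M_j}{\pr_i}$ times; the remaining factors $R/\mathfrak q$ satisfy $\mathfrak q \supsetneq \pr_i$ for some $i$, hence have dimension $<j$ and length $<\omega^j$ by the key case. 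Ordering the filtration so that the $R/\pr_i$-factors sit at the top and iterating Semi-additivity yields the lower bound $\len{\bar M_j} \geq \omega^j e_j$ (with the sub-$\omega^j$ tail absorbed in the ordinal sum), and the matching upper bound follows once the contribution of the non-minimal factors is shown to vanish.

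Assembling: iterating Semi-additivity on the dimension filtration gives
\begin{equation*}
\len{\bar M_d} \ordsum \len{\bar M_{d-1}} \ordsum \cdots \ordsum \len{\bar M_0} \leq \len M \leq \len{\bar M_d} \ssum \len{\bar M_{d-1}} \ssum \cdots \ssum \len{\bar M_0}.
\end{equation*}
Since $\len{\bar M_j} = \omega^j e_j$ with distinct $j$-values, both bounds reduce in Cantor normal form to $\omega^d e_d + \omega^{d-1} e_{d-1} + \cdots + e_0$, which is exactly the shuffle sum $\Ssum_{\pr \in \op{Ass}(M)} \mul M \pr \omega^{\dim R/\pr}$.

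The main obstacle is the upper bound in the equidimensional step. A prime filtration of an equidimensional module typically contains factors $R/\mathfrak q$ for non-associated primes $\mathfrak q$, and the shuffle-sum bound from Semi-additivity alone only gives $\len{\bar M_j} \leq \omega^j e_j + \alpha$ for some $\alpha < \omega^j$. Forcing $\alpha = 0$ requires going beyond Semi-additivity, for instance by embedding $\bar M_j$ into a canonical companion module of length exactly $\omega^j e_j$ built from the Artinian localisations $(\bar M_j)_{\pr_i}$ and using monotonicity of $\len$ under submodule inclusion, or by a direct chain-lifting argument exploiting that $\bar M_j$ has no embedded primes so that every long chain descends to chains at the minimal primes.
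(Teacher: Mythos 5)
You have correctly identified the crux yourself: the upper bound $\len M\leq\cohrk M$ is not proven, and this is the genuinely hard half of the theorem. Semi-additivity applied to a prime filtration (or to the dimension filtration) only yields $\len{\bar M_j}\leq \mul {\bar M_j}j\omega^j\ssum\opord a$ with $\opord a<\omega^j$ coming from the non-minimal factors, and neither of your two suggested repairs is carried out; the first (embedding $\bar M_j$ into a ``companion module'' built from the localizations $(\bar M_j)_{\pr_i}$) is circular as stated, since those localizations are not finitely generated $R$-modules and computing the length of a finitely generated replacement is the very problem at hand. The paper closes this gap by a different mechanism: \Lem{L:loccohgen} shows that $\hlc{\cdot}0{\pr}$ is exact (not merely left exact) on a short exact sequence localized at a minimal prime of the kernel, whence (\Lem{L:lcord}) $\cohrk Q<\cohrk M$ for \emph{every} proper quotient $Q$ of $M$. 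A transfinite induction on $\cohrk M$ then gives $\hd N=\len{M/N}\leq\cohrk{M/N}<\cohrk M$ for every non-zero submodule $N$, so the foundation rank of the zero submodule is at most $\cohrk M$. The point is that one must control all quotients of $M$ at once, not only those occurring in a chosen filtration; this is the missing idea in your proposal.

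Two secondary problems. First, the ideals $(x_1^{n_1},\dots,x_d^{n_d})$ are only \emph{partially} ordered by inclusion (one has $(x_1^{m_1},\dots,x_d^{m_d})\sub(x_1^{n_1},\dots,x_d^{n_d})$ essentially only when $n_i\leq m_i$ for all $i$), so no linear ordering of $\nat^d$ makes them a chain of order type $\omega^d$; the correct statement is that they form a copy of the product order $\nat^d$, which has length $\omega^d$ by the product formula (\Thm{T:prod}) and maps strictly increasingly into $\grass{}{R/\pr}$, so \Thm{T:inc} salvages the lower bound. Second, in the key case the degree drop for the single quotient $R/(\pr+xR)$ gives no upper bound on $\len{R/\pr}$: to conclude $\len{R/\pr}\leq\omega^d$ you must bound $\hd I=\len{R/I}$ for \emph{every} non-zero ideal $I$ of the domain $R/\pr$, and since $R/I$ need not be cyclic over a domain of lower dimension, this requires the general inequality $\len N<\omega^{\dim N+1}$ for arbitrary modules of smaller dimension --- exactly the double induction carried out in the proof of \Thm{T:dim}, which you should invoke rather than re-derive.
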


It follows that the degree of the length of $M$ is equal to the dimension of  $M$. In particular,   a ring $R$ is   a domain \iff\ $\len R=\omega^d$, where $d$ is the dimension of $R$. In fact, the latter two results can also be proven by the theory of deviations and generalized Krull dimension initiated by Gabriel and Renschler (\cite{GabCat,GabRen}); see for instance, \cite[Proposition 6.1.10]{McCRob}.     However, our current theory is entirely distinct from this theory, as it applies only to Noetherian  modules: a module has ordinal length \iff\ it is Noetherian. On the other hand, ordinal length is a much finer invariant than Krull dimension. So, although ordinals have been used in the past by algebraists (\cite{GabRen,GarDec,NasOys,Sim}), it seems that our theory is their first foray into commutative algebra (perhaps with the exception of their short appearance, very much in the spirit of the current paper, in \cite{AschPong}). 
%
For the reader less adept at this concept from logic, the paper starts with a section on ordinals, and in an appendix, I explain   shuffle sums. 

The last two sections contain applications of the theory that do not mention length itself: acyclicity   (\S\ref{s:acyc}), and degradation (\S\ref{s:degrad}). These are merely meant   as an illustration of the power of the theory.  However, since any two domains of the same dimension have the same length (\Thm{T:dim}), as do and any two projective modules of the same rank (\Prop{P:lenproj}), length is not very sensitive to singularities nor to local/global phenomena. However, one can build ordinal-valued invariants from it that do measure these, such as the \emph{filtration rank}  of a module. This, and other  ordinal invariants, will be discussed in a series of future papers on applications of ordinal length to local cohomology, Koszul homology, \CM\ singularities,  prime filtrations, ordinal Hilbert and Poincare series, algebraic entropy, endomorphisms, \dots.
The preprints \cite{SchBinEndo} and \cite{SchCond}, on the other hand, study modules whose ordinal lengths have special properties.

\section{Notation and generalities on ordered sets}
  An \emph{ordered}
set $P$ (also called a \emph{partially ordered} set or \emph{poset}), is a set
together with a reflexive, antisymmetric and transitive binary relation
$\leq_P$, called the \emph{ordering} of $P$, and almost always written as 
$\leq$, without a subscript. Almost always, our posets will have  \emph{endpoints}, that is to say, a (unique)  least element $\bot$ and greatest element $\top$. 
A partial order is \emph{total} if for any two elements $a,b\in P$ either $a\leq b$ or $b\leq a$. A subset $C\sub P$ is called a \emph{chain}, if its induced order is total.
 If $a\leq b$, then
we may express
this by saying that \emph{$a$ is below $b$};
if $a<b$ (meaning that $a\leq b$ and $a\neq b$), we also say that \emph{$a$ is
strictly below $b$}.
More generally, for subsets
$A,B\sub P$, we say $A$ is \emph{below} $B$, 
and
write $A\leq B$, to mean that $a\leq b$ for
all $a\in A$ and all $b\in B$.

The   \emph{initial closed interval}  determined by $a\in P$ is by definition the set of 
$b\in  P$ with $b\leq a$ and will be denoted   $\init Pa$. Dually, the
 \emph{terminal closed interval} of $a$, denoted  $\term Pa$,
is the collection of all $b\in P$
with  $a\leq b$.

\subsection{Ordinals}\label{s:Ord}
A partial ordering is called a \emph{partial well-order} if it has the
descending chain condition, that is to say, any descending chain must
eventually be constant. A total order is a well-order \iff\ every non-empty
subset has a minimal element.  

Recall that an \emph{ordinal} $\opord a$ is an equivalence class, up to an order-preserving  isomorphism, of a total well-order (we will use a special font to distinguish ordinals from ordinary numbers).
  We say that $\opord a\leq \opord b$ if $\opord a$ is isomorphic to an initial segment of $\opord b$. This well-orders the class of all ordinals, and in particular, any \emph{set} of  ordinals has a minimum and a supremum. The finite ordinals are just the natural numbers (where we identify $n$ with the order $0<1<\dots<n-1$); the first infinite ordinal is the order-type of $(\nat,<)$ and is denoted $\omega$. Ordinal sum is defined by concatenation: $\opord a\ordsum\opord b$ is the ordinal obtained by putting $\opord a$ before $\opord b$ (see \S\ref{s:sum}). As this depends on the order, this sum is not commutative: $1\ordsum\omega\neq\omega\ordsum1$ since the former is just $\omega$. (Ordinal sum is usually denoted simply by $\opord a+\opord b$, but this might be misleading for algebraists, as this is not a commutative operation, and so our notation reflects that the right hand side is `dominant': smaller terms on the left are `gobbled up'.)  With a few minor exceptions, we will not use ordinal multiplication explicitly, and so $\omega^n$ will just mean the order-type of $\nat^n$ with its lexicographical ordering, and $a\omega^n$, for $a\in\nat$, will mean the $a$-fold sum of $\omega^n$ with itself (in textbooks, this would normally be denoted $\omega^na$, following Cantor's original notation, but as this  is quite awkward  for the algebraically inclined, we keep the more natural `scalar' multiplication notation).   
%
%

The supremum of all $\omega^n$ for $n\in\nat$ is denoted  $\omega^\omega$. All ordinals considered in this paper will be less than $\omega^\omega$. They therefore admit a unique  Cantor normal form  
\begin{equation}\label{eq:Cantor}
\opord a=a_d\omega^d\ordsum a_{d-1}\omega^{d-1}\ordsum \dots\ordsum a_1\omega\ordsum a_0
\end{equation} 
 with $a_i\in\nat$. We call the $a_i$ the \emph{Cantor coefficients} of $\opord a$, and denote them by  $\mul {\opord a}i:=a_i$. We call the least $i$ (respectively, the largest $i$) such that $\mul {\opord a}i\neq 0$ the \emph{order} $\order{}{\opord a}$ (respectively, the \emph{degree} $\deg{\opord a}$) of $\opord a$; the sum of all $\mul {\opord a}i$ is called its \emph{valence} $\val{\opord a}$.
An ordinal $\opord a$ is called a \emph{successor ordinal}  if it has an immediate predecessor.
This is equivalent with $\order{}{\opord a}=0$. The second addition, the shuffle sum $\ssum$, can be defined using Cantor normal forms as follows (see Appendix~\ref{s:ssum} for details): for each $i$ we have $\mul{\opord a\ssum\opord b}i=\mul{\opord a}i+\mul{\opord b}i$. Thus, for instance,  $(\omega^3\ordsum\omega)\ssum(\omega^2\ordsum2\omega)=\omega^3\ordsum\omega^2\ordsum3\omega$, which we will therefore also denote as $\omega^3\ssum\omega^2\ssum3\omega$. Note, however, that $(\omega^3\ordsum\omega)\ordsum(\omega^2\ordsum2\omega)=\omega^3\ssum\omega^2\ssum2\omega$, where the first $\omega$ gets `gobbled up' by the   $\omega^2$ to its right. An alternative way of viewing ordinals is as those surreal numbers (a la Conway, see, for instance \cite{KnuSur}) born last  on any given day; the addition in the field of surreals then corresponds to the shuffle sum $\ssum$. Taking the latter point of view also endows the ordinals with a commutative multiplication, but the only instance we need is multiplication with some power $\omega^n$ or some scalar $n\in\nat$. We have the obvious rule $\omega^n\cdot \omega^m=\omega^{m+n}$ and by `linearity', we extend this  for an arbitrary ordinal $\opord a$ with Cantor normal form \eqref{eq:Cantor} to
\begin{equation}\label{eq:multord}
\omega^n\cdot\opord a:=a_d\omega^{d+n}\ordsum a_{d-1}\omega^{d+n-1}\ordsum \dots\ordsum a_1\omega^{n+1}\ordsum a_0\omega^n.
\end{equation} 
Similarly, we define $n\cdot \opord a$ as the $n$-fold shuffle sum $\opord a\ssum\dots\ssum\opord a$, that is to say,
\begin{equation}\label{eq:scalord}
n\cdot\opord a:=na_d\omega^d\ordsum na_{d-1}\omega^{d-1}\ordsum \dots\ordsum na_1\omega^1\ordsum na_0.
\end{equation} 

Given any $e\geq 0$, we will write 
\begin{equation}\label{eq:splitord}
\opord a^{\geq e}:= \mul {\opord a}d\omega^d\ssum \dots\ssum \mul {\opord a}e\omega^e\qquad\text{and}\qquad \opord a^{\leq e}:=\mul {\opord a}e\omega^e\ssum \dots\ssum \mul {\opord a}0,
\end{equation} 
and a similar meaning for $\opord a^{> e}$ and $\opord a^{< e}$. 
In particular, $\opord a=\opord a^{> e}\ordsum \opord a^{\leq e}$ is the decomposition in  all terms of degree respectively bigger than $e$ and at most $e$.

\subsection{The length of a partial well-order}\label{s:odim} 

Let $P$ be a partial well-order. We define 
    the \emph{\hdim} $\hdmod P \cdot$ on $P$ by transfinite induction as follows: at successor stages, we say that $\hdmod P a\geq\opord r\ssum 1$, if there exists $b< a$ with $\hdmod P b\geq\opord r$, and at limit stages, that $\hdmod Pa\geq\opord r$, if there exists for each $\opord a<\opord r$ some $b_{\opord a}\leq a$ with $\hdmod P{b_{\opord a}}\geq\opord a$. We then say that $\hdmod Pa=\opord r$ if $\hdmod Pa\geq\opord r$ but not $\hdmod Pa\geq \opord r\ssum 1$. In particular, $\hdmod P\bot=0$. For a subset $A\sub P$, we set $\hdmod PA$ equal to the supremum of all $\hdmod Pa$ with $a\in A$. Finally, we define the \emph{(ordinal) length} of $P$ as $\len P:=\hdmod PP=\sup\{\hdmod Pa|a\in P\}$. If $P$ has a maximal element $\top$, then $\len P=\hdmod P\top$.

\begin{example}[Ordinals]\label{E:lenord}
Note of caution: the length of an ordinal $\opord a$ can be different from the ordinal itself. Indeed, if $\opord a$ is a successor ordinal, with predecessor $\opord a'$, then, as a chain, it is given by $0<1<\dots<\opord a'$, with maximal element $\top_{\opord a}=\opord a'$. An easy induction shows that $\hdmod{\opord a}{\opord b}=\opord b$, and so $\len{\opord a}=\hdmod{\opord a}{\top_{\opord a}}=\hdmod{\opord a}{{\opord a}'}={\opord a}'$. On the other hand, if ${\opord a}$ is a limit ordinal, then it has no maximal element, and $\len {\opord a}$ is the supremum of all $\hdmod{\opord a}{\opord b}={\opord b}$ with ${\opord b}<{\opord a}$, that is to say, $\len {\opord a}={\opord a}$. We may summarize this into a single formula
\begin{equation}\label{eq:lenord}
\len{\opord a}=\sup\term{\opord a}0
\end{equation} 
\end{example} 

Let us say that a partial well-order $P$ \emph{admits a composition series}, if there exists a chain $\mathcal C$ in $P$ with $\len P=\len{\mathcal C}$. Not every partial well-order has a composition series as the following example shows:

\begin{example}\label{E:noncomp}
Let $P'$ be the disjoint union of all finite ordinals (meaning that there are no order relations among the different disjuncts) and let $P$ be obtained from $P'$ by adding a single element $\top$ above all elements in $P'$. Hence $\hdmod P{\top}=\omega$, but any chain in $P$ has finite length. It is true that $\len P$ is equal to the supremum of all $\len{\mathcal C}$ with $\mathcal C$ a chain in $P$. However, if instead we let $Q$ be obtained from $P'$ by adding two elements $a$ and $\top$ above each element in $P'$ with $a<\top$, then $\len Q=\hdmod Q\top=\omega\ssum 1$ but the supremum of all chain lengths is just $\omega$, so even the supremum of all chain lengths is less than the actual length.
\end{example} 

\begin{lemma}\label{L:AB}
Let $P$ be a  partial well-order and let 
$A,B\sub P$ be subsets. If $A\leq B$, then 
\begin{equation}
\label{eq:AB}
\hdmod AA\ordsum \hdmod BB \leq \hdmod PB
\end{equation}
\end{lemma}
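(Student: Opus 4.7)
The plan is to prove the pointwise inequality
$\hdmod P b \geq \hdmod A A \ordsum \hdmod B b$ for every $b\in B$, and then take the supremum over $b$: since $\opord\alpha\ordsum{-}$ preserves suprema in its right argument, the sup of the right-hand side is $\hdmod A A\ordsum \hdmod B B$, yielding \eqref{eq:AB}. Along the way I use two elementary facts about $\hdmod P{\cdot}$, both easy transfinite inductions on height: monotonicity ($x\leq y$ in $P$ implies $\hdmod P x\leq\hdmod P y$, with strict inequality when $x<y$), and subset monotonicity ($\hdmod Q x\leq \hdmod P x$ whenever $Q\sub P$ and $x\in Q$).

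The pointwise estimate is established by transfinite induction on $\hdmod B b$. In the base case $\hdmod B b=0$, the hypothesis $A\leq B$ gives $a\leq b$ for every $a\in A$, and the two monotonicity facts combine to give $\hdmod A A\leq\hdmod P A\leq \hdmod P b$, which is what is needed since $\hdmod A A\ordsum 0=\hdmod A A$. In the successor case $\hdmod B b=\opord r\ssum 1$, the successor clause inside $B$ produces $b'\in B$ with $b'<b$ and $\hdmod B{b'}\geq\opord r$; strict monotonicity then forces $\hdmod B{b'}=\opord r$, so by induction $\hdmod P{b'}\geq \hdmod A A\ordsum \opord r$, and the successor clause in $P$ pushes this up by one to $\hdmod A A\ordsum(\opord r\ssum 1)$, as desired.

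The main obstacle is the limit case, where $\hdmod B b=\opord r$ is a limit ordinal. The naive move is to apply the limit clause directly inside $B$, but the witness $b_{\opord s}\leq b$ it provides may coincide with $b$ itself, which blocks the inductive hypothesis. The fix is to apply the successor clause instead to the strengthened statement $\hdmod B b\geq \opord s\ssum 1$, valid because $\opord s\ssum 1<\opord r$: this produces $b'\in B$ with $b'<b$ strictly and $\hdmod B{b'}\geq\opord s$, and now strict monotonicity gives $\hdmod B{b'}<\opord r$, making the inductive hypothesis available and yielding $\hdmod P{b'}\geq \hdmod A A\ordsum\opord s$. Since $\hdmod A A\ordsum\opord r=\sup_{\opord s<\opord r}(\hdmod A A\ordsum\opord s)$ is a limit and every $\opord\tau<\hdmod A A\ordsum\opord r$ is dominated by some $\hdmod A A\ordsum\opord s$ with $\opord s<\opord r$, the witnesses $b'$ so constructed verify the limit clause in $P$ and deliver $\hdmod P b\geq \hdmod A A\ordsum\opord r$, closing the induction.
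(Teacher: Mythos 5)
Your proof is correct and follows essentially the same route as the paper's: the pointwise estimate $\hdmod AA\ordsum \hdmod Bb\leq\hdmod Pb$ by transfinite induction on $\hdmod Bb$, with the same base and successor cases and the same final passage to suprema. Your limit case is in fact slightly more careful than the paper's, which simply asserts witnesses $b_{\opord c}\in B$ strictly below $b$ of height exactly $\opord c$; your use of the successor clause at $\opord s\ssum 1<\opord r$ cleanly sidesteps both the ``exact height'' point and the $\leq$-versus-$<$ issue in the limit clause of the definition.
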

\begin{proof}
Let ${\opord a}:=\hdmod AA=\len A$. Since  $\hdmod PB$ is the supremum of all $\hdmod Pb$ with $b\in B$, it suffices to show   that
\begin{equation}\label{eq:Ab}
{\opord a}\ordsum \hdmod Bb \leq  \hdmod Pb.
\end{equation}
We will prove \eqref{eq:Ab} by induction on ${\opord b}:=\hdmod Bb$. Assume
first that ${\opord b}=0$. Let ${\opord p}:=\hdmod PA$. Since ${\opord a}$
is the supremum of all $\hdmod Aa$ for $a\in
A$, and since $\hdmod Aa\leq\hdmod Pa$, we get ${\opord a}\leq{\opord p}$. Since $A\leq b$, we have
${\opord p}\leq\hdmod Pb$, and hence we are done in this case.

Next,  assume   ${\opord b}$ is a successor ordinal with predecessor by ${\opord b}'$. By definition, there exists $b'\in B$ below $b$ such that $\hdmod B{b'}\geq{\opord b}'$.
By induction,
we get $\hdmod  P{b'}\geq {\opord a}\ordsum {\opord b}'$. This in turn shows that
$\hdmod Pb$ is at least ${\opord a}\ordsum {\opord b}$. Finally, assume ${\opord b}$ is a limit
ordinal. Hence for each ${\opord c}<{\opord b}$, there
exists
$b_{\opord c}\in B$ below $b$ such that $\hdmod B{b_{\opord c}}={\opord c}$. By induction,
$\hdmod P{b_{\opord c}}\geq{\opord a}\ordsum {\opord c}\leq {\opord a}\ordsum {\opord b}$ and hence also  $\hdmod Pb\geq {\opord a}\ordsum {\opord b}$.
\end{proof}

\subsection{Sum Orders}\label{s:sum}
By the  \emph{sum} $P+Q$ of two  partially ordered sets $P$ and $Q$ (which, after taking an isomorphic copy,  we may assume to be disjoint), we mean the partial order induced on their  union $P\sqcup Q$ by declaring any element in $P$ to lie below any element in $Q$. In fact, if ${\opord a}$ and ${\opord b}$ are ordinals, then their ordinal sum $\opord a\ordsum\opord b$ is just ${\opord a}\sqcup{\opord b}$.
We may represent elements
in the disjoint union $P\sqcup Q$ as pairs $(i,a)$ with $i=0$ if $a\in P$
and $i=1$ if $a\in Q$. The
ordering  $P+ Q$ is  then the lexicographical
ordering on such pairs, that is to say, $(i,a)\leq (j,b)$ if $i<j$ or if $i=j$
and $a\leq b$. 

\begin{proposition}\label{P:sum}
If $P$ and $Q$ are partial well-orders, then so is $P+Q$.
If $P$ has moreover a \maxel, then 
$\len{P+Q}=\len P\ordsum \len Q$.
\end{proposition}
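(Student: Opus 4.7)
The plan splits the statement into its two assertions. For the well-order part, a descending chain $x_0 > x_1 > \cdots$ in $P + Q$ can have at most one transition from $Q$ down to $P$, since elements of $P$ sit strictly below elements of $Q$. The chain thus decomposes as an initial descending segment in $Q$ followed by a (possibly empty) tail in $P$, each of which stabilizes by the DCC on the respective poset.

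For the length formula, the lower bound $\len P \ordsum \len Q \leq \len{P+Q}$ falls out immediately from \Lem{L:AB} taken with $A = P$ and $B = Q$ inside $P+Q$: the hypothesis $A \leq B$ is built into the construction of the sum, so the lemma delivers $\hdmod PP \ordsum \hdmod QQ \leq \hdmod{P+Q}Q$, and since $Q$ sits entirely above $P$ the supremum $\hdmod{P+Q}Q$ coincides with $\len{P+Q}$.

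The reverse inequality $\len{P+Q} \leq \len P \ordsum \len Q$ is the substance of the argument, and is where the hypothesis that $P$ has a maximum $\top_P$ enters essentially. I would prove by transfinite induction on $\opord b := \hdmod Qq$ the pointwise estimate $\hdmod{P+Q}q \leq \len P \ordsum \opord b$ for every $q \in Q$; combined with the trivial bound $\hdmod{P+Q}p = \hdmod Pp \leq \len P$ for $p \in P$ (every predecessor of $p$ already lies in $P$), taking suprema over $P+Q$ then produces the desired bound. In the inductive step, each predecessor of $q$ in $P+Q$ is either some $p \in P$, contributing height at most $\len P = \hdmod P{\top_P}$ (with the supremum attained precisely because $\top_P$ exists), or some $q' \in Q$ with $\hdmod Q{q'} < \opord b$, to which the inductive hypothesis applies.

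The main obstacle will be the successor step of this induction: one has to verify carefully that augmenting $\hdmod Q{q'}$ by one cover-step translates, under $\len P \ordsum (-)$, into precisely one cover-step on the full ordinal sum, and that the heights of the $P$-predecessors do not push the bound past $\len P$ before the $Q$-contribution begins to accumulate. The existence of $\top_P$ is exactly what pins the $P$-contribution at the value $\len P$ rather than merely approximating it from below; without this, the `final step' on the $Q$-side could not be cleanly absorbed into the dominant right-hand addend of $\ordsum$, and the upper bound would fail.
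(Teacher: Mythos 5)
Your architecture coincides with the paper's: the DCC claim is handled by splitting a descending sequence at its unique $Q$-to-$P$ transition (the paper leaves this part as an exercise), and the length formula by a transfinite induction that computes $\hdmod{P+Q}q$ for $q\in Q$ in terms of $\hdmod Qq$. Your one genuine variation --- extracting the lower bound $\len P\ordsum\len Q\leq\len{P+Q}$ directly from \Lem{L:AB} with $A=P$ and $B=Q$ --- is valid and a little cleaner than the paper, which folds both inequalities into a single induction on $\hdmod Qa$.

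The upper bound, however, fails at the base case, and the failure is not repairable within the stated formula. If $\hdmod Qq=0$, the strict predecessors of $q$ in $P+Q$ are exactly the elements of $P$, and $\top_P$ is one of them, with $\hdmod{P+Q}{\top_P}=\hdmod P{\top_P}=\len P$; the successor clause in the definition of \hdim\ then forces $\hdmod{P+Q}q\geq\len P\ssum1$, not $\leq\len P\ordsum 0=\len P$. This is precisely the danger you flagged (``the heights of the $P$-predecessors do not push the bound past $\len P$''): they do push past it, by exactly one step, and the existence of $\top_P$ is what makes this unavoidable rather than what prevents it. Indeed the statement itself is off by one for the disjoint sum: two finite chains with $m+1$ and $n+1$ elements concatenate to a chain with $m+n+2$ elements, whose length is $m+n+1$, whereas $\len P\ordsum\len Q=m+n$. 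The correct identity (for $Q$ nonempty) is $\len{P+Q}=\len P\ordsum 1\ordsum\len Q$, which agrees with the stated one exactly when $1\ordsum\len Q=\len Q$, i.e.\ when $\len Q$ is infinite; your induction goes through verbatim with the corrected target $\hdmod{P+Q}q\leq\len P\ordsum 1\ordsum\hdmod Qq$. For the record, the paper's own proof stumbles at the same spot: in its base case it rules out a predecessor $(j,b)$ of \hdim\ $\opord p$ with $j=0$ on the grounds that $\hdmod{P+Q}{\top_P}$ would then exceed $\opord p$, overlooking that $(0,\top_P)$ itself is such a predecessor and yields no contradiction. (In the paper's one application, \Thm{T:chain}, the two concatenated chains share their junction element, and for that amalgamated sum the formula as stated is the right one.)
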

\begin{proof}
We leave it as an exercise to show that $P+ Q$ is a partial well-order.  
Let ${\opord p} :=\hdmod P{\top_P}= \len P$. For a pair $(i,a)$ in  $P+ Q$, let ${\opord n}(i,a)$ be
equal to $\hdmod  Pa$ if $i=0$
and to ${\opord p}\ordsum \hdmod Qa$ if $i=1$. The assertion will follow once we showed that
${\opord n}(i,a)=\hd{i,a}$, for all $(i,a)\in P+Q$, where we wrote $\hd{i,a}$ for $\hdmod {P+Q}{i,a}$. We use transfinite induction. If $i=0$, that is
to say, if $a\in P$, then the claim is easy to check, since no element from $Q$
lies below $a$. So we may assume $i=1$ and $a\in Q$.  Let ${\opord a}:=\hdmod  Qa$
and suppose first that ${\opord a}=0$. Since any element of $P$ lies below $a$, in
any case ${\opord p}\leq\hd{i,a}$. If this were strict, then there would be
an element  $(j,b)$ below $(i,a)$ of \hdim\ ${\opord p}$.
Lest
$\hd{0,\top_P}$ would be bigger than ${\opord p}$, we must have $j=1$ whence $b\in Q$. Since $b\leq_Q a$, we get  $\hdmod Q a\geq1$, contradiction.
  This
concludes the case ${\opord a}=0$, so  assume ${\opord a}>0$. We leave the limit case to
the reader and assume moreover that ${\opord a}$ is a successor ordinal with predecessor ${\opord a}'$. Hence there
exists some  $b\in Q$   below $a$    with $\odmod Q{b}={\opord a}'$. By
induction, $\hd{1,b}={\opord n}(1,b)={\opord p}\ordsum {\opord a}'$, and hence 
$\hd{1,a}\geq{\opord p}\ordsum {\opord a}$. By a similar argument as above, one then easily
shows that this must in fact be an equality, as we wanted to show.
\end{proof}

\subsection*{Increasing functions}
 Let $f\colon P\to Q$ be an increasing (=order-preserving) map
between ordered sets. We say that $f$ is
\emph{strictly
increasing}, if $a<b$ then $f(a)<f(b)$. For instance, an increasing, injective  map  is strictly increasing.
%

\begin{theorem}\label{T:inc}
Let $f\colon P\to Q$ be a strictly increasing map between partial well-orders. If $P$ has  a \minel\  $\bot_P$, then  
\begin{equation*}
\hdmod Q{f(\bot_P)}\ordsum \hdmod Pa\leq \hdmod Q{f(a)}.
\end{equation*}
for all    $a\in P$.
\end{theorem}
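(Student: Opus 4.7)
The plan is to do transfinite induction on $\opord a := \hdmod P a$, with $\opord p := \hdmod Q{f(\bot_P)}$ held fixed; the goal in each case is $\hdmod Q{f(a)} \geq \opord p \ordsum \opord a$. Note first that since $\bot_P$ is the (unique) least element of $P$, any $a \in P$ with $\hdmod P a = 0$ must equal $\bot_P$, which disposes of the base case as $\opord p \geq \opord p$. For all later cases we will use, without further comment, that $\bot_P < a$ (because $\opord a > 0$) and hence $f(\bot_P) < f(a)$ by strict monotonicity.

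For the successor case $\opord a = \opord r \ssum 1$, the definition of $\hdmod P a$ supplies some $b < a$ with $\hdmod P b \geq \opord r$. The induction hypothesis then gives $\hdmod Q{f(b)} \geq \opord p \ordsum \opord r$, and since $f(b) < f(a)$, one application of the successor clause in $Q$ yields
\begin{equation*}
\hdmod Q{f(a)} \geq (\opord p \ordsum \opord r) \ssum 1 = \opord p \ordsum (\opord r \ssum 1) = \opord p \ordsum \opord a,
\end{equation*}
where the first equality uses that $\ssum 1$ and $\ordsum 1$ agree (the extra top element is absorbed from the right either way).

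For the limit case, $\opord p \ordsum \opord a$ is itself a limit ordinal, so it suffices to exhibit, for each $\opord d < \opord p \ordsum \opord a$, some $c < f(a)$ in $Q$ with $\hdmod Q c \geq \opord d$. If $\opord d < \opord p$, take $c := f(\bot_P)$, which is strictly below $f(a)$ and has height at least $\opord d$. Otherwise left-cancel: write $\opord d = \opord p \ordsum \opord c$ with $\opord c < \opord a$, use the limit clause for $\hdmod P a$ to obtain $b < a$ in $P$ with $\hdmod P b \geq \opord c$, and apply the induction hypothesis to get $\hdmod Q{f(b)} \geq \opord p \ordsum \opord c = \opord d$, with $f(b) < f(a)$ by strict monotonicity.

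I expect the limit case to be the only nontrivial step, and specifically the need to split $\opord d$ into its part below $\opord p$ and its part above. This requires the left-cancellation property of ordinal sum (every $\opord d \geq \opord p$ is uniquely of the form $\opord p \ordsum \opord c$), together with the observation that the image of $\bot_P$ already contributes heights up to $\opord p$ strictly below $f(a)$, which is exactly what lets us prepend $\opord p$ to the height inherited from $a$ through $f$.
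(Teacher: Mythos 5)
Your proof is correct and follows essentially the same route as the paper's: transfinite induction on $\hdmod Pa$, with the successor case handled by passing to a predecessor element $b<a$ and applying strict monotonicity of $f$. The only difference is that you work out the limit case (via left subtraction of $\opord p$ and using $f(\bot_P)$ to witness the heights below $\opord p$), which the paper explicitly leaves to the reader.
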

\begin{proof}
From the context, it will  be clear in which ordered set we calculate
the rank and hence we will drop the superscripts.   Let ${\opord c}:=\hd{f(\bot)}$.  We
   induct  on ${\opord a}:=\hd a$, where the case
${\opord a}=0$ holds trivially.    We  leave the limit case to the reader and  
assume that ${\opord a}$
is a successor ordinal  with predecessor ${\opord a}'$. By definition, there exists $b<a$
with $\hd{b}={\opord a}'$. By induction, the \hdim\ of
$f(b)$ is at least ${\opord c}\ordsum {\opord a}'$. By assumption,  $f(b)<f(a)$, showing that  $f(a)$ has \hdim\ at least ${\opord c}\ordsum {\opord a}$. 
\end{proof}

Even in the absence of a \minel, the inequality   still holds,  upon replacing the first ordinal in the formula by the minimum of the ranks
of all $f(a)$ for $a\in P$. In particular, \hdim\  
always
increases.

\section{Semi-additivity}\label{s:mod}
 Let $R$ be a ring and $M$ a Noetherian $R$-module.
The \emph\sch\ of $M$ (over $R$) is by definition the collection $\grass  RM$ of all submodules
of $M$, ordered  by reverse inclusion.  
The \hdim\ of $\gr M$ will be called the \emph{length} $\lenmod RM$ of $M$ as
an $R$-module. This is well-defined, since   $\grass RM$ is   a well-partial order.
Thus, for   $N\sub M$, we have $\hd N\geq {\opord a}\ssum 1$, if there exists $N'$ properly  containing $N$ with $\hd{N'}\geq{\opord a}$; and $\len M$ is then given as $\hd{\zeroid M}$, where $\zeroid M$ denotes the
zero submodule of $M$. 
 Since the initial closed interval 
$\init {\gr M}N$   is isomorphic to $\grass R {M/N}$, we get
 \begin{equation}\label{eq:quot}
 \hd N=\hdmod {\grass R M}N=\hdmod{\grass R{M/N}}{\zeroid {M/N}}=\lenmod R {M/N}.
 \end{equation}
Similarly, $\term{\grass R M} N$ consists of all submodules of $M$ contained in $N$, whence is equal to $\grass R N$. Note that if $I$ is an ideal in the annihilator of $M$, then $\grass RM=\grass {R/I}M$, so that in order to calculate the length  of $M$, it makes no difference whether we view it as an $R$-module or as an $R/I$-module. We call the \emph{length} of $R$, denoted $\len R$, its length when viewed as a module over itself. 
 Hence, the length  of $R/I$ as an $R$-module is the same as that of $R/I$ viewed as a ring. We define the \emph{order}, $\order RM$, and \emph{valence}, $\val M$, as the respective order and valence of $\lenmod RM$.

\begin{theorem}[Semi-additivity]\label{T:semadd}
If $\Exactseq NMQ$ is an exact sequence of Noetherian $R$-modules, then 
\begin{equation}\label{eq:lensemadd}
  \lenmod R Q\ordsum \lenmod R N\leq \lenmod R M\leq \lenmod R Q\ssum \lenmod R N
\end{equation}
Moreover, if the sequence is split, then the last inequality is an equality.
\end{theorem}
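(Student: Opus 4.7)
The plan is to prove the three pieces in turn: the left inequality from the natural two-piece decomposition of $\grass RM$ at $N$ via Lemma~\ref{L:AB}; the upper (shuffle) inequality by transfinite induction on Grassmanian height using the short five lemma; and the split case by lifting the product of Grassmanians into $\grass RM$.

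For the \textbf{left inequality}, the initial interval $\init{\grass RM}N$ is order-isomorphic to $\grass RQ$ via the correspondence theorem, while the terminal interval $\term{\grass RM}N$ is precisely $\grass RN$. Setting $A := \init{\grass RM}N$ and $B := \term{\grass RM}N$, one has $A \leq B$ in the reverse-inclusion order, so Lemma~\ref{L:AB} yields $\len Q \ordsum \len N = \hdmod AA \ordsum \hdmod BB \leq \hdmod{\grass RM}B = \len M$; the last equality holds because $\zeroid M \in B$ with $\hdmod{\grass RM}{\zeroid M} = \len M$.

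For the \textbf{upper bound}, I would prove by transfinite induction on $\opord n := \hdmod{\grass RM}{M'}$ the strengthened statement
\[
\hdmod{\grass RM}{M'} \leq \hdmod{\grass RQ}{(M'+N)/N} \ssum \hdmod{\grass RN}{M' \cap N}
\]
for every $M' \in \grass RM$, then set $M' = \zeroid M$. Since strict inequality in a partial well-order forces strict inequality of $\hd$, any $M'' \supsetneq M'$ has $\hd{M''} < \hd{M'}$ and the induction hypothesis applies automatically. For a successor $\opord n = \opord n' \ssum 1$, choose $M'' \supsetneq M'$ with $\hd{M''} = \opord n'$; the short five lemma applied to the rows $0 \to M^\ast \cap N \to M^\ast \to (M^\ast + N)/N \to 0$ for $M^\ast \in \{M', M''\}$ forces at least one of the projections $(M'+N)/N \supsetneq (M''+N)/N$ or $M' \cap N \supsetneq M'' \cap N$ to be strict, contributing a $\ssum 1$ on the corresponding summand; monotonicity of $\ssum$ in both arguments, together with the induction hypothesis on $M''$, then closes the step. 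The limit case is immediate: for each $\opord a < \opord n$ a witness $M''_{\opord a} \supsetneq M'$ automatically has rank $< \opord n$, so the induction hypothesis combined with $\hd{(M''_{\opord a}+N)/N} \leq \hd{(M'+N)/N}$ and $\hd{M''_{\opord a} \cap N} \leq \hd{M' \cap N}$ pushes the right-hand side past $\opord a$, and one takes the supremum using that $\opord n$ is a limit.

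For the \textbf{split case} $M = N \oplus Q$, in view of the upper bound it suffices to show $\len M \geq \len N \ssum \len Q$. The map $\Phi \colon \grass RN \times \grass RQ \to \grass RM$, $(N', Q') \mapsto N' \oplus Q'$, is strictly order-preserving when the domain carries the product reverse-inclusion order, and sends the minimum $(N, Q)$ of the product to the minimum $M$ of $\grass RM$. Applying \Thm{T:inc} at $a = (\zeroid N, \zeroid Q)$ (the maximum of the product) yields $\len{\grass RN \times \grass RQ} \leq \len M$. One then invokes the standard fact from Appendix~\ref{s:ssum} that the length of the product of two partial well-orders with maxima equals the shuffle sum of the two lengths, giving $\len N \ssum \len Q \leq \len M$. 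The \textbf{main obstacle} across the whole proof is the upper-bound induction: even though no essentially new ideas are required, the interplay of the five-lemma-driven strict drop, the monotonicity of $\ssum$, and the bookkeeping between successor and limit cases must be orchestrated carefully so that each successor step deposits exactly one $\ssum 1$ into the correct summand and the limit step really terminates at $\opord n$.
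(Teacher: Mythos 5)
Your proof is correct and follows essentially the same route as the paper: the lower bound via Lemma~\ref{L:AB} applied to the initial and terminal intervals at $N$, the upper bound via the decomposition $H\mapsto(H\cap N,\pi(H))$ whose strictness is exactly your five-lemma observation, and the split case via the embedding of $\grass RN\times\grass RQ$ into $\grass RM$ together with \Thm{T:inc} and the product formula \Thm{T:prod}. The only difference is presentational: where the paper cites \Thm{T:inc} and \Thm{T:prod} to conclude the upper bound from the strictly increasing map into the product order, you unfold those two results into a single bespoke transfinite induction, which is a valid (if slightly longer) way to close the same step.
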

\begin{proof}
The last assertion   follows from the first, \Thm{T:prod}, and    the fact that then 
$$ \grass R N\times\grass R Q\sub \grass R M.$$
 To prove the lower estimate, let $A$ be the initial closed interval $\init {\grass R M}N$ and let $B$ be the terminal closed interval  $\term {\grass R M}N$. By our discussion above, $A=\grass R{M/N}=\gr Q$, since $M/N\iso Q$,  with   \maxel, viewed in $\grass R Q$, equal to   $\zeroid Q$. By the same discussion,  $B=\grass R N$ with \maxel\ $\zeroid N$. Since $A\leq B$, we may apply Lemma~\ref{L:AB}   to get an inequality 
$$
\hdmod {\grass R Q}{\zeroid Q} \ordsum  \hdmod {\grass R N}{\zeroid N}\leq \hdmod{\grass R M}{\zeroid M},
$$
from which the assertion follows.

To prove the upper bound, let $f\colon \grass RM\to \grass RN\times\grass RQ$ be the map sending a submodule $H\sub M$ to the pair $(H\cap N,\pi(H))$, where $\pi$ denotes the morphism $M\to Q$. It is not hard to see that this is an increasing function. Although it is in general not injective, I claim that $f$ is strictly increasing, so that we can apply  \Thm{T:inc}. Together with \Thm{T:prod}, this gives us the desired inequality. So remains to verify the claim: suppose $H<H'$ but $f(H)=f(H')$. Hence $H'\varsubsetneq H$, but $H\cap N=H'\cap N$ and $\pi(H)=\pi(H')$. Applying the last equality to an element $h\in H\setminus H'$, we get $\pi(h)\in \pi(H')$, whence $h\in H'+N$. Hence, there exists $h'\in H'$ such that $h-h'$ lies in $H\cap N$ whence in $H'\cap N$. This in turn would mean $h\in H'$, contradicting our assumption on $h$.
\end{proof}

\begin{corollary}\label{C:regid}
Let $R$ be a Noetherian ring. If $x$ is an $R$-regular element and $I\sub R$ an
arbitrary ideal, then
 $$
\len{R/xR}\ordsum \len{R/I}\leq \len{R/xI}.
$$
\end{corollary}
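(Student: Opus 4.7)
The plan is to apply the semi-additivity theorem (the lower bound in \Thm{T:semadd}) to a carefully chosen short exact sequence involving $R/xI$, and then use $R$-regularity of $x$ to identify the submodule piece with $R/I$.

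The exact sequence I would use is
$$0 \to xR/xI \to R/xI \to R/xR \to 0,$$
where the first map is inclusion (note that $xI \sub xR$) and the second is the canonical surjection induced by $xR \sub R$ (the kernel is precisely $xR/xI$). Applying the lower estimate in \Thm{T:semadd} with $N = xR/xI$, $M = R/xI$, and $Q = R/xR$ yields
$$\len{R/xR} \ordsum \len{xR/xI} \leq \len{R/xI}.$$

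The second step is to identify $\len{xR/xI}$ with $\len{R/I}$. Since $x$ is $R$-regular, multiplication by $x$ is an injective $R$-linear map $R \to R$ with image $xR$, inducing an isomorphism of $R$-modules $R/I \iso xR/xI$ (surjectivity is immediate; injectivity uses that $xr \in xI$ forces $r \in I$ by regularity of $x$). Since length is an isomorphism invariant (the \sch\ only depends on the module up to isomorphism), we conclude $\len{xR/xI} = \len{R/I}$, and substituting gives the desired inequality.

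There is no substantial obstacle here: the argument is essentially a one-line consequence of semi-additivity once the right short exact sequence is written down. The only mild point to be careful about is that we need the lower (ordinal-sum) bound, not the upper (shuffle-sum) bound, and that the order $\len Q \ordsum \len N$ in the theorem matches the roles of $R/xR$ (quotient) and $R/I \cong xR/xI$ (submodule) in our sequence --- which it does.
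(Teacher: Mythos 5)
Your proof is correct and is essentially the paper's own argument: the paper applies semi-additivity to the short exact sequence $0\to R/I\xrightarrow{\,x\,} R/xI\to R/xR\to 0$, which is exactly your sequence with the submodule $xR/xI$ already identified with $R/I$ via multiplication by the regular element $x$. No differences worth noting.
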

\begin{proof}
Apply Theorem~\ref{T:semadd} to the exact sequence
$$
\exactseq {R/I} x{R/xI} {} {R/xR}.
$$
\end{proof}
 
Applying Corollary~\ref{C:regid} to the zero ideal and observing that ${\opord a}\ordsum {\opord b}={\opord b}$ \iff\ $\deg{\opord a}<\deg{\opord b}$, we get:

\begin{corollary}\label{C:hyp}
If $x$ is an $R$-regular element, then the  degree of $\len{R/xR}$ is
 strictly less than the degree of $\len R$.\qed
\end{corollary}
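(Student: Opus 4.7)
The plan is to feed Corollary~\ref{C:regid} with the zero ideal and then extract the degree comparison from the resulting absorption identity for ordinal sum.

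First, I would set $I=0$ in Corollary~\ref{C:regid}, so that $R/I=R$ and $xI=0$, giving $R/xI=R$. The corollary then delivers the inequality
\begin{equation*}
\len{R/xR}\ordsum\len R\leq \len R.
\end{equation*}
On the other hand, for any ordinals $\opord a,\opord b$ we have $\opord b\leq \opord a\ordsum\opord b$ (since $\opord b$ is isomorphic to the terminal segment of the concatenation), so the reverse inequality holds trivially. Therefore $\len{R/xR}\ordsum \len R=\len R$.

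The next step is to translate this equality into the claimed degree strict inequality. Write $\len R$ in Cantor normal form with leading term $c\omega^d$, where $d=\deg(\len R)$, and similarly let $e=\deg(\len{R/xR})$. If $e\geq d$, then in the concatenation $\len{R/xR}\ordsum\len R$ the whole $\len{R/xR}$ sits to the left of $\len R$, and its leading $\omega^e$ term survives because it is at least as big as $\omega^d$; a quick inspection of Cantor normal forms then yields $\len{R/xR}\ordsum\len R\neq\len R$ (more precisely, the coefficient of $\omega^d$ strictly increases if $e=d$, and the degree strictly increases if $e>d$). Hence $e<d$, as claimed.

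There is essentially no obstacle here: the only care required is to verify the absorption principle ``$\opord a\ordsum\opord b=\opord b \iff \deg\opord a<\deg\opord b$,'' which is immediate from the definition of ordinal concatenation and the Cantor normal form recalled in~\S\ref{s:Ord}. The argument is a one-line application of Corollary~\ref{C:regid} together with this basic fact about ordinals.
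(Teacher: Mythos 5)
Your proposal is correct and is exactly the paper's argument: apply Corollary~\ref{C:regid} with $I=0$ to get $\len{R/xR}\ordsum\len R\leq\len R$, hence equality, and then invoke the absorption fact that $\opord a\ordsum\opord b=\opord b$ \iff\ $\deg{\opord a}<\deg{\opord b}$. Nothing to add.
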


\begin{theorem}\label{T:dim}
Let $M$ be a   finitely generated module over a Noetherian ring $R$. Then the degree of $\lenmod RM$ is equal to the dimension of $M$. In particular,   $R$ is a $d$-dimensional domain \iff\ $\len R=\omega^d$.
\end{theorem}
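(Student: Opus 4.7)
Plan. I induct on $d:=\dim M$. The base $d=0$ is immediate: a zero-dimensional finitely generated Noetherian module is Artinian of finite classical length, so $\len M<\omega$ and $\deg\len M=0$.

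For the inductive step, both estimates reduce to the case of a Noetherian domain via prime filtrations. Iterated application of the upper inequality in \Thm{T:semadd} to a prime filtration $0=M_0\subsetneq M_1\subsetneq\cdots\subsetneq M_n=M$ with $M_i/M_{i-1}\cong R/\mathfrak p_i$ yields $\len M\leq\Ssum_i\len(R/\mathfrak p_i)$, whose degree is $\max_i\deg\len(R/\mathfrak p_i)$; so the upper bound $\deg\len M\leq d$ will follow once it is established for each prime quotient $R/\mathfrak p_i$. For the lower bound, I pick a minimal associated prime $\mathfrak p$ of $M$ with $\dim R/\mathfrak p=d$; the embedding $R/\mathfrak p\hookrightarrow M$ makes $\grass R{R/\mathfrak p}$ a terminal closed interval of $\grass RM$, so $\len(R/\mathfrak p)\leq\len M$, reducing the lower bound to the same case. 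It therefore suffices to prove $\len R=\omega^d$ for every Noetherian domain $R$ of dimension $d$.

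For the upper bound in the domain case I use the observation that every nonzero ideal $N\subseteq R$ contains some nonzero $x$, whence by Krull's principal ideal theorem $\dim R/N\leq\dim R/xR\leq d-1$; the inductive hypothesis applied to $R/N$ then gives $\hd N=\len(R/N)<\omega^d$ for every $N\neq 0$ in $\grass RR$. The recursive definition of height rank now forces $\hd 0=\len R\leq\omega^d$, because $\hd 0\geq\omega^d\ssum 1$ would require some $N\neq 0$ with $\hd N\geq\omega^d$. For the lower bound I pick a maximal prime chain $0\subsetneq\mathfrak p_1\subsetneq\cdots\subsetneq\mathfrak p_d$, a nonzero $x\in\mathfrak p_1$, and a minimal prime $\mathfrak q$ of $xR$ contained in $\mathfrak p_1$; then Krull's principal ideal theorem and the chain passing through $\mathfrak q$ force $\dim R/\mathfrak q=d-1$, so by the inductive hypothesis applied to the $(d-1)$-dimensional domain $R/\mathfrak q$ we have $\len(R/\mathfrak q)=\omega^{d-1}$. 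Since $R/\mathfrak q$ is a quotient of $R/xR$, also $\len(R/xR)\geq\omega^{d-1}$, and iterating the lower inequality of \Thm{T:semadd} on the sequences $0\to x^{i-1}R/x^iR\to R/x^iR\to R/x^{i-1}R\to 0$ (using $x^{i-1}R/x^iR\cong R/xR$) yields $\len(R/x^nR)\geq n\omega^{d-1}$. As $x^nR\neq 0$, this forces $\hd 0\geq n\omega^{d-1}\ssum 1$ in $\grass RR$ for every $n$, whence $\len R\geq\sup_n n\omega^{d-1}=\omega^d$.

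I expect the upper bound for domains to be the main obstacle, since (as warned by Example~\ref{E:noncomp}) height rank is not in general a supremum of chain lengths; the argument succeeds only because the domain hypothesis furnishes the uniform bound $\hd N<\omega^d$ on \emph{every} nonzero ideal at once, which is exactly what the recursive definition can exploit. The "in particular" statement then follows quickly: the forward direction is the domain calculation just made. For the converse, suppose $\dim R=d$ but $R$ is not a domain; then $(0)$ is not prime, so every minimal prime of $R$ is nonzero, and some such prime $\mathfrak q$ satisfies $\dim R/\mathfrak q=d$. The exact sequence $0\to\mathfrak q\to R\to R/\mathfrak q\to 0$ together with the lower inequality of \Thm{T:semadd} gives $\omega^d\ordsum\len\mathfrak q\leq\len R$, and $\len\mathfrak q\geq 1$ forces $\len R>\omega^d$, a contradiction.
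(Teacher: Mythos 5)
Your proof is correct, and its skeleton coincides with the paper's: induction on dimension, reduction to the domain case, and the identical key argument that in a $d$-dimensional domain every nonzero ideal has \hdim\ $<\omega^d$ by the inductive hypothesis, so that the recursive definition of \hdim\ caps $\len R$ at $\omega^d$ (you rightly flag this as the point where one must not confuse \hdim\ with a supremum of chain lengths). Two sub-steps are packaged differently. For the upper bound on a general module you pass through a prime filtration and iterate the upper inequality of \Thm{T:semadd}, using that the degree of a shuffle sum is the maximum of the degrees, whereas the paper runs a secondary transfinite induction on $\len M$ itself, peeling off one copy of $R/\pr$ for a top-dimensional associated prime $\pr$ at each stage; your version needs only the single induction on $d$, at the cost of invoking prime filtrations. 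For the lower bound $\omega^d\leq\len R$ in a domain, you filter by the powers $x^iR$ and iterate the lower inequality of \Thm{T:semadd} on $\Exactseq{x^{i-1}R/x^iR}{R/x^iR}{R/x^{i-1}R}$ to accumulate $n\omega^{d-1}$ for every $n$, while the paper extracts the same content from \Cor{C:hyp} (the degree of $\len{R/xR}$ drops strictly below that of $\len R$, obtained by applying lower semi-additivity to multiplication by $x$) combined with a $(d-1)$-dimensional prime quotient. These are two renderings of the same mechanism, namely the self-injection given by multiplication by $x$; yours makes explicit where the supremum $\omega^d$ comes from, the paper's is shorter once \Cor{C:hyp} is in hand. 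Your converse argument (a nonzero minimal prime $\mathfrak q$ with $\dim(R/\mathfrak q)=d$ forces $\len R\geq\omega^d\ordsum 1$) is the same as the paper's, read in the contrapositive.
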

\begin{proof}
Let ${\opord m}$   be the   length of $M$ and $d$ its dimension. 
 We start with proving the inequality  
\begin{equation}\label{eq:dimlow}
 \omega^d\leq {\opord m}. 
\end{equation}
 We will do this first for $M=R$, by induction on  $d$, where the case $d=1$ is clear, since $R$ does not have finite length. Hence we may assume $d>1$.
Taking the residue modulo a $d$-dimensional prime ideal (which only can
lower   length), we may assume that $R$ is a domain. Let $\pr$ be a $(d-1)$-dimensional prime ideal  and let $x$ be a non-zero element in $\pr$. 
By Corollary~\ref{C:hyp},
 the degree of   $\len{R/xR}$ is at most $\deg{\opord m}-1$.  
By induction,
 $\omega^{d-1}\leq\len{R/\pr} \leq \len{R/xR}$, whence $d-1\leq \deg{\opord m}-1$, proving \eqref{eq:dimlow}.

For $M$ an arbitrary $R$-module, let $\pr$ be a $d$-dimensional associated prime of $M$, so that $R/\pr$ is isomorphic to a submodule of $M$, whence  $\len{R/\pr}\leq \opord m$
  by \Thm{T:semadd}. As we already proved that $\omega^d\leq\len{R/\pr}$, we obtain  \eqref{eq:dimlow}.

Next we show that 
\begin{equation}\label{eq:dimhi}
{\opord m}<\omega^{d+1},
\end{equation}
 again by induction on $d$. 
Assume   first that $M=R$ is a domain. Since   $R/I$ has then dimension at most $d-1$ for any non-zero ideal $I$, we get $\len{R/I}<\omega^d$ by our induction hypothesis. By \eqref{eq:quot}, this means that any non-zero ideal has \hdim\ less than $\omega^d$, and hence $R$ itself has length at most $\omega^d$. Together with \eqref{eq:dimlow}, this already proves one direction in the second assertion.
For the general case, we do a second induction, this time on ${\opord m}$. Let $\pr$  be again a $d$-dimensional associated prime of $M$, and consider an exact sequence $\Exactseq {R/\pr}M{\bar M}$. By \Thm{T:semadd}, we get ${\opord m}\leq \len{\bar M}\ssum\len{R/\pr}$.  By what we just proved, $\len{R/\pr}=\omega^d$, and hence by induction ${\opord m}\leq\len{\bar M}\ssum\omega^d<\omega^{d+1}$. The first assertion is now immediate from \eqref{eq:dimlow} and \eqref{eq:dimhi}.

Conversely, if $R$ has length $\omega^d$, then for any non-zero ideal $I$, the length of $R/I$ is strictly less than $\omega^d$, whence its dimension is strictly less than $d$ by what we just proved. This shows that $R$ must be a domain.
\end{proof}

\section{Length as a cohomological rank}\label{s:cohrk}

In this section, $R$ wil always be a Noetherian ring of finite Krull dimension, and all $R$-modules  will be finitely generated. 
We denote the collection of all associated primes of $M$ (= prime ideals of the form $\ann {}a$ with $a\in M$) by $\op{Ass}(M)$; it is always a finite set. 
We will make frequent use, for a short exact sequence $\Exactseq NMQ$, of the following two inclusions (see, for instance, \cite[Lemma 3.6]{Eis})
 
\begin{align}
 \label{eq:ass1} \op{Ass}(N)&\sub\op{Ass}(M);\\
 \label{eq:ass2} \op{Ass}(M)&\sub\op{Ass}(N)\cup\op{Ass}(Q)
\end{align}

 Let $M$ be a finitely generated $R$-module, and $\id\sub R$ an ideal. Recall that the \emph{$\id$-torsion} or (zero-th) \emph{local cohomology} of $M$ at $\id$, denoted $\Gamma_\id(M)$, 
is given as the intersection of all $\ann {M}{\id^n}$, that is to say, as all elements in   $M$ that are killed by some power of $\id$. This is a left exact functor and its higher derived functors will be denoted $\hlc Mj\id$.  Following common practice, we will identify $\Gamma_\id(M)$ with its zero-th derived functor and henceforth denote it  $\lc M\id$. 
If $(R,\maxim)$ is local, then $\lc M\maxim$ has finite length, denoted $\mul M\maxim$ and  called    the  \emph{local multiplicity} of   $M$. 

Note that $\hlc M0\pr_\pr\iso \hlc{M_\pr}0{\pr R_\pr}$. Since localization is exact, we get
\begin{equation}\label{eq:lcloc}
\hlc Mi\pr_\pr\iso \hlc{M_\pr}i{\pr R_\pr}
\end{equation}  
for all $i$. We will write $\mul M\pr$ for $\mul {M_\pr}{\pr R_\pr}$ and call it the \emph{local multiplicity of $\pr$ on $M$};\footnote{See, for instance, \cite[p.~102]{Eis}; not to be confused with the multiplicity of a module at a primary ideal given in terms of its Hilbert function.} it is non-zero \iff\ $\pr\in\op{Ass}(M)$.

 We now define
the \emph{cohomological rank} of a module $M$ as  
$$
\cohrk M:=\Ssum_{\pr} \mul M\pr  \omega^{\op{dim}(R/\pr)}.
$$
It is instructive to view this from the point of view of Chow cycles. 
Let $\mathcal A(R)$ be the \emph{Chow ring} of $R$, defined as the free Abelian group on $\op{Spec}(R)$. An element $D$ of $\mathcal A(R)$ will be called a \emph{cycle}, and  will be represented as a finite sum  $\sum a_i[\pr_i]$, where $[\pr]$ is the symbol denoting the free generator corresponding to the prime ideal $\pr$. The sum of all $a_i$ is called the degree $\op{deg}(D)$ of the cycle $D$. We define a partial order on $\mathcal A(R)$ by the rule that $D\aleq E$, if $a_i\leq b_i$, for all $i$, where $E=\sum b_i[\pr_i]$. In particular, denoting the zero cycle simply by $0$, we call a cycle $D$ \emph{effective} , if $0\aleq D$, and we let $\mathcal A^+(R)$ be the semi-group of effective cycles. This allows us to define a map  from effective cycles to ordinals by sending the effective cycle $D=\sum_ia_i[\pr_i]$ to the ordinal
$$
\binord D:=\Ssum_i a_i\omega^{\op{dim}(R/\pr_i)}.
$$
Clearly, if $D$ and $E$ are effective, then $\binord{D+E}=\binord D\ssum\binord E$. Moreover, if $D\aleq E$, then $\binord D\aleq\binord E$, so that we get a map $(\mathcal A^+(R),+,\aleq)\to (\ord,\ssum,\aleq)$ of partially ordered semi-groups.

To any $R$-module $M$, we can  assign its \emph{fundamental cycle}, by the rule
\begin{equation}\label{eq:}
\fcyc RM:=\sum_{\pr}\mul M \pr [\pr]
\end{equation} 
 In particular, $\binord {\fcyc{} M} =\cohrk M$.  
 Our main result now links this cohomological invariant to our combinatorial length invariant :
 
 \begin{theorem}\label{T:cohrk}
For any finitely generated module $M$ over a finite-dimensional Noetherian ring $R$, we   have $\len M=\cohrk M$.
\end{theorem}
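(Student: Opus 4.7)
The plan is to prove $\len M = \cohrk M$ by induction on $d := \dim M$. The base case $d=0$ is essentially classical: $M$ then has finite ordinary length, every associated prime is maximal, and iterating \Thm{T:semadd} on a composition series collapses both of its bounds to ordinary addition in $\nat$, giving $\len M = \ell(M) = \sum_{\maxim} \ell(M_{\maxim}) = \cohrk M$.

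For $d > 0$, I would first separate the top-dimensional part of $M$ from the rest using primary decomposition. Writing $0 = \bigcap_{\pr \in \op{Ass}(M)} Q_\pr$, set $M' := \bigcap_{\dim R/\pr = d} Q_\pr$. Standard arguments show that $M/M'$ is equidimensional of dimension $d$ with the same top-dimensional associated primes as $M$, while $\dim M' < d$. The key additivity identity, which I would establish by localizing the exact sequence $\Exactseq{M'}{M}{M/M'}$ prime-by-prime, is $\cohrk M = \cohrk(M/M') \ssum \cohrk M'$. At a top-dimensional $\pr$, exactness of localization combined with $M'_\pr = 0$ (since the $\pr$-primary component of $0$ in $M_\pr$ is $0$, while all other $(Q_{\pr'})_\pr$ equal $M_\pr$) yields $\mul{M/M'}\pr = \mul M\pr$ and $\mul{M'}\pr = 0$. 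At a lower-dimensional $\pr'$, left-exactness of $\lc{\cdot}{\pr' R_{\pr'}}$ combined with $\pr' R_{\pr'} \notin \op{Ass}((M/M')_{\pr'})$ (because $\op{Ass}(M/M')$ contains only top-dimensional primes) forces $\lc{(M/M')_{\pr'}}{\pr' R_{\pr'}} = 0$, whence $\mul{M'}{\pr'} = \mul M{\pr'}$. The outer induction hypothesis applied to $M'$ then yields $\len M' = \cohrk M'$.

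To handle the equidimensional piece $M/M'$, I would run a secondary induction on $r := \sum_{\pr \text{ top assoc of } M} \mul M\pr$. When $r > 0$, pick a top-dimensional associated prime $\pr$, embed $R/\pr \hookrightarrow M/M'$, and let $\bar M$ be the cokernel. Exactness of localization at the minimal prime $\pr$ together with additivity of ordinary length over $R_\pr$-modules gives $\mul{\bar M}\pr = \mul M\pr - 1$; and for any other top-dimensional $\pr' \ne \pr$, incomparability of distinct top-dimensional primes forces $(R/\pr)_{\pr'} = 0$, hence $\mul{\bar M}{\pr'} = \mul M{\pr'}$. Thus $\bar M$ is equidimensional of dimension $d$ with total top-multiplicity $r-1$, and by the secondary induction $\len \bar M = (r-1)\omega^d$. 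Applying \Thm{T:semadd} to $\Exactseq{R/\pr}{M/M'}{\bar M}$ together with $\len(R/\pr) = \omega^d$ from \Thm{T:dim}, both bounds pinch to $r\omega^d$, since $(r-1)\omega^d \ordsum \omega^d = (r-1)\omega^d \ssum \omega^d = r\omega^d$ whenever both summands share degree $d$.

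The proof concludes by applying \Thm{T:semadd} to $\Exactseq{M'}{M}{M/M'}$: the upper bound gives $\len M \le \len(M/M') \ssum \len M' = \cohrk M$, while the lower bound gives $\len M \ge \len(M/M') \ordsum \len M'$. Since $\deg \len(M/M') = d$ strictly exceeds $\deg \len M'$, the left summand of the ordinal sum dominates and no absorption occurs, so it coincides with the shuffle sum and the two bounds agree. The main technical obstacle I expect is verifying the two multiplicity identities that make the fundamental cycle additive on $\Exactseq{M'}{M}{M/M'}$; once these are in hand, the ordinal bookkeeping runs on rails, exploiting the principle that $\ordsum$ agrees with $\ssum$ whenever the left summand has degree at least that of the right.
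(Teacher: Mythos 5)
Your reduction to the equidimensional case via $M':=\fl d_{d-1}(M)$ is sound: the multiplicity identities making $\fcyc{}{}$ additive on $\Exactseq{M'}M{M/M'}$ are correct, and the final observation that $\ordsum$ agrees with $\ssum$ because $\deg\len{M'}<\order{}{M/M'}$ is fine. The gap is in the secondary induction on the equidimensional piece. When you mod out a copy of $R/\pr\into M/M'$, the cokernel $\bar M$ need not be equidimensional of dimension $d$ --- it need not even have dimension $d$. Take $R=\pow k{x,y}$ and $M=\maxim=(x,y)$, so that $M'=0$, $r=1$ and $\pr=(0)$: any embedding $R\into\maxim$, $1\mapsto f$, has cokernel $\maxim/fR$, which is annihilated by $f$ (since $f\maxim\sub fR$) and hence is a \emph{nonzero} module of dimension $1$; for $f=x$ it is $\iso R/xR$, of length $\omega$. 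Your multiplicity computation correctly gives $\mul{\bar M}{\pr}=0$ at the top prime, but the induction hypothesis ``$\bar M$ equidimensional of top-multiplicity $r-1$'' fails, and even knowing $\len{\bar M}=\omega$ exactly, semi-additivity only yields $\omega^2\leq\len\maxim\leq\omega^2\ssum\omega$: the two bounds do not pinch. This is the generic situation, since the cokernel of $R/\pr\into M/M'$ typically acquires lower-dimensional associated primes whose contribution to the upper bound is not absorbed.

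What is missing is an upper bound $\len M\leq\cohrk M$ established \emph{independently} of semi-additivity. The paper proves it first, by transfinite induction on $\cohrk M$, using the key fact (\Lem{L:lcord}, resting on \Lem{L:loccohgen}) that $\cohrk{}$ \emph{strictly} decreases under proper surjections: every nonzero $N\sub M$ then satisfies $\hd N=\len{M/N}\leq\cohrk{M/N}<\cohrk M$, whence $\len M=\hd{\zeroid M}\leq\cohrk M$ by the definition of the foundation rank. With that in hand, a lower bound of the kind you construct (the paper peels off $R/\pr$ for $\pr$ of \emph{minimal} dimension and squeezes against this upper bound by an ordinal comparison) completes the proof; without it, the argument for the top-dimensional piece cannot be closed.
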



 Before we give the proof, we derive two lemmas. It is important to notice that the first of these is not true at the level of cycles.

\begin{lemma}\label{L:lcord}
If $M\to Q$ is a proper surjective morphism of $R$-modules, then 
$$
\cohrk Q< \cohrk M.
$$
\end{lemma}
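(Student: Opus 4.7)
My plan is to reduce to the case where the kernel $N := \ker(M\onto Q)$ is isomorphic to $R/\pr$ for a single prime $\pr$, and then track local multiplicities at each prime through the localized short exact sequence via local cohomology. Since $N\neq 0$ is a finitely generated module over the Noetherian ring $R$, it admits a prime filtration $0=N_0\subsetneq N_1\subsetneq\cdots\subsetneq N_k=N$ with $N_i/N_{i-1}\iso R/\pr_i$ for suitable primes $\pr_i$. Interpolating yields a chain of proper surjections $M\onto M/N_1\onto\cdots\onto M/N_k=Q$, each step having kernel isomorphic to $R/\pr_i$, so by induction on $k$ it is enough to treat the case $N\iso R/\pr$; set $e:=\dim(R/\pr)$.

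Localizing the sequence $0\to R/\pr\to M\to Q\to 0$ at an arbitrary prime $\mathfrak q$ of $R$ and applying the long exact sequence of local cohomology at $\mathfrak q R_\mathfrak q$, three cases arise. If $\pr\not\sub\mathfrak q$, then $(R/\pr)_\mathfrak q=0$, so $\mul M\mathfrak q=\mul Q\mathfrak q$. If $\pr=\mathfrak q$, then $(R/\pr)_\pr=\kappa(\pr)$ is a length-one $R_\pr$-module of Krull dimension zero, whence $\hlc{\kappa(\pr)}i{\pr R_\pr}=0$ for $i\geq 1$ and the long exact sequence collapses to
\[
0\to\kappa(\pr)\to\lc{M_\pr}{\pr R_\pr}\to\lc{Q_\pr}{\pr R_\pr}\to 0,
\]
giving $\mul Q\pr=\mul M\pr-1$. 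Finally, if $\pr\subsetneq\mathfrak q$, then $\dim(R/\mathfrak q)<e$ and $\mul Q\mathfrak q$ may actually exceed $\mul M\mathfrak q$; however, such primes contribute only to Cantor coefficients at degrees strictly less than $e$.

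Summarizing: for every prime $\mathfrak q$ with $\dim(R/\mathfrak q)\geq e$ and $\mathfrak q\neq\pr$ we have $\mul M\mathfrak q=\mul Q\mathfrak q$, while at $\pr$ itself the multiplicity drops by exactly one. Hence in the Cantor normal forms of $\cohrk M$ and $\cohrk Q$, all coefficients of $\omega^i$ agree for $i>e$ while the coefficient of $\omega^e$ is one smaller in $\cohrk Q$. Since ordinal comparison is lexicographic from the top, this forces $\cohrk Q<\cohrk M$ regardless of the lower-degree behavior. The main obstacle I anticipate is precisely the third case: $\mul Q\mathfrak q$ can genuinely exceed $\mul M\mathfrak q$ at primes $\mathfrak q\supsetneq\pr$, which is exactly the cycle-level failure flagged in the paper just before the statement. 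The saving grace is that these unruly contributions are confined to degrees below $e$ and get absorbed by the strict decrease at degree $e$ in the ordinal order.
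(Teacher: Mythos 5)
Your proof is correct, but it takes a different route from the paper's. You first reduce, via a prime filtration of the kernel $N$ and transitivity of $<$, to the case $N\iso R/\pr$, and then carry out an explicit three-way case analysis on the localizing prime $\mathfrak q$; the key step, exactness of $0\to\kappa(\pr)\to\lc{M_\pr}{\pr R_\pr}\to\lc{Q_\pr}{\pr R_\pr}\to 0$ at $\mathfrak q=\pr$, is your own self-contained proof of the special case of \Lem{L:loccohgen} that you need ($\kappa(\pr)$ is already $\pr R_\pr$-torsion, so $\hlc{\kappa(\pr)}1{\pr R_\pr}=0$). The paper instead works with the kernel $N$ directly: letting $d=\dim N$, it observes that every $\pr$ with $\op{dim}(R/\pr)>d$ lies outside $\op{Supp}(N)$, so $M_\pr\iso Q_\pr$ there, and then invokes \Lem{L:loccohgen} at a $d$-dimensional prime of $\op{Ass}(M)\cap\op{Supp}(N)$ (necessarily minimal over $N$) to force $\mul Q\pr<\mul M\pr$. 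Both arguments hinge on the same two points---multiplicities are unchanged above the dimension of the kernel, and the $H^0$-sequence is exact at a top-dimensional prime of the kernel, after which lexicographic comparison of Cantor coefficients finishes---so what your filtration buys is a completely elementary local computation at the cost of an extra induction, while the paper's version is shorter and isolates \Lem{L:loccohgen} for reuse elsewhere (e.g.\ in \Cor{C:semiaddcyc} and \Thm{T:cohrk}). Your handling of the genuinely troublesome primes $\mathfrak q\supsetneq\pr$, where $\mul Q{\mathfrak q}$ may exceed $\mul M{\mathfrak q}$, correctly mirrors the paper's warning that the statement fails at the level of cycles and only holds for the ordinal.
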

\begin{proof}
Let $N$ be the (non-zero) kernel of $M\to Q$, and let $d$ be its dimension. If $\pr\in\op{Ass}(M)$ but not in the support of $N$, then  $M_\pr\iso Q_\pr$, so that they have the same local cohomology. This holds in particular for any $\pr\in\op{Ass}(M)$ with $\op{dim}(R/\pr)>d$, showing that $\cohrk Q$ and $\cohrk M$ can only start differing at a coefficient of $\omega^i$ for $i\leq d$.  So let $\pr\in\op{Ass}(M)\cap\op{Supp}(N)$ have dimension $d$. In general, local cohomology is only left exact, but by \Lem{L:loccohgen} below, we have in fact an exact sequence \eqref{eq:lc}. 
  Since $\mul N \pr \neq0$, we must therefore have $\mul Q\pr <\mul M\pr $.  It now easily follows that $\cohrk Q< \cohrk  M$.
\end{proof}

\begin{lemma}\label{L:loccohgen}
Given an exact sequence $\Exactseq NMQ$, if $\pr$ is a minimal prime of $N$, then 
\begin{equation}\label{eq:lc}
\Exactseq{ \hlc {N_\pr}0{\pr R_\pr}} { \hlc{M_\pr}0{\pr R_\pr}}{ \hlc{Q_\pr}0{\pr R_\pr}}
\end{equation}
is exact.
\end{lemma}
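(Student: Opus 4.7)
The plan is to localize at $\pr$ and then appeal to the long exact sequence of local cohomology, where the only obstruction to right exactness is $H^1_{\pr R_\pr}(N_\pr)$. The entire proof reduces to showing this obstruction vanishes, and this is where the minimality hypothesis on $\pr$ enters.

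First I would localize the given short exact sequence at $\pr$ to obtain an exact sequence
$$0\to N_\pr\to M_\pr\to Q_\pr\to 0$$
of $R_\pr$-modules, and then apply the derived functor $\hlc{-}{\bullet}{\pr R_\pr}$. This yields a long exact sequence whose first four terms are
$$0\to \hlc {N_\pr}0{\pr R_\pr}\to \hlc {M_\pr}0{\pr R_\pr}\to \hlc {Q_\pr}0{\pr R_\pr}\to \hlc {N_\pr}1{\pr R_\pr}\to \cdots,$$
so it suffices to prove that $\hlc {N_\pr}1{\pr R_\pr}=0$.

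The next step is to use the minimality of $\pr$ to pin down the structure of $N_\pr$. Since $\pr$ is a minimal prime of $N$, no prime strictly contained in $\pr$ lies in $\op{Supp}(N)$, so $\op{Supp}(N_\pr)=\{\pr R_\pr\}$. Because $N$ is finitely generated, so is $N_\pr$, and a finitely generated module over a Noetherian local ring whose support is the maximal ideal is annihilated by a power of that ideal; in particular, $N_\pr$ is $\pr R_\pr$-torsion, i.e.\ $N_\pr=\lc{N_\pr}{\pr R_\pr}$.

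The main point is then the vanishing $\hlc T i{\pr R_\pr}=0$ for $i\geq 1$ whenever $T$ is a $\pr R_\pr$-torsion $R_\pr$-module. I would establish this via the \v Cech complex: writing $\pr R_\pr=(x_1,\dots,x_r)$, every element of $T$ is killed by some power of each $x_j$, so the localizations $T_{x_j}$ all vanish, forcing the entire \v Cech complex above degree $0$ to be zero. Applied to $T=N_\pr$, this gives $\hlc{N_\pr}1{\pr R_\pr}=0$, closing off the long exact sequence on the right and yielding the desired short exact sequence \eqref{eq:lc}. The only conceptually delicate step is the one requiring minimality: without it, $N_\pr$ need not be $\pr R_\pr$-torsion and $H^1$ can genuinely fail to vanish, which is exactly why the lemma is pitched at minimal, rather than arbitrary, associated primes.
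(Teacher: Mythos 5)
Your proof is correct, and it shares the paper's key reduction: localize at $\pr$ (using that localization is exact) and observe that minimality of $\pr$ over $\op{Supp}(N)$ forces $N_\pr$ to be annihilated by a power of $\pr R_\pr$. Where you diverge is in how this is cashed in. You invoke the long exact sequence of local cohomology and prove the general vanishing $\hlc{T}{i}{\id}=0$ for $i\geq 1$ and $T$ an $\id$-torsion module, via the \v{C}ech complex, thereby killing the target $\hlc{N_\pr}{1}{\pr R_\pr}$ of the connecting map. The paper instead avoids derived functors entirely at this point: after localizing so that $N$ is killed by $\pr^n$, it proves surjectivity of $\lc{M_\pr}{\pr R_\pr}\to\lc{Q_\pr}{\pr R_\pr}$ by a two-line element chase --- if $\pr^m\bar a=0$ in $Q$, then $\pr^m a\sub N$, which is killed by $\pr^n$, so $\pr^{m+n}a=0$ in $M$. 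Your route proves slightly more (all higher connecting maps out of $\hlc{N_\pr}{i}{\pr R_\pr}$ vanish, not just the first) at the cost of importing the identification of local cohomology with \v{C}ech cohomology; the paper's closing step is elementary and self-contained. Both arguments are complete.
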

\begin{proof}
Upon localizing, using \eqref{eq:lcloc}, we may assume $(R,\pr)$ is local and $N$ has finite length. We only need to prove exactness at the final map. 
By assumption,   $N$ is annihilated by some power $\pr^n$. 
Let $a\in M$ be such that its image  $\bar a$ in $Q$ lies in $\hlc Q0\pr$,   that is to say, $\pr^m\bar a=0$ in $Q$, for some $m$. Therefore, $\pr^ma\in N$, whence $\pr^{m+n}a=0$ in $M$, showing that $a\in\hlc M0\pr$.
\end{proof}

\begin{corollary}\label{C:semiaddcyc}
Given an exact sequence $\Exactseq NMQ$, if $M$ has no embedded primes, then we have an equality of cycles
\begin{equation}\label{eq:semiaddcyc}
\fcyc RM+D=\fcyc RN+\fcyc RQ
\end{equation} 
where $D$ is an effective cycle supported on $\op{Ass}(Q)\setminus\op{Ass}(M)$. 
\end{corollary}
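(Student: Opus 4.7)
The plan is to verify the cycle equality coefficient by coefficient: for each prime $\pr\in\op{Spec}(R)$, set
$$d_\pr:=\mul N\pr+\mul Q\pr-\mul M\pr,$$
and establish that $d_\pr\geq 0$, with $d_\pr=0$ whenever $\pr\notin\op{Ass}(Q)\setminus\op{Ass}(M)$. Putting $D:=\sum_\pr d_\pr[\pr]$, the resulting cycle is automatically a finite sum (only associated primes contribute nonzero local multiplicities), effective, supported where claimed, and satisfies $\fcyc RM+D=\fcyc RN+\fcyc RQ$ by construction.

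First I would dispatch the easy case $\pr\notin\op{Ass}(M)$: here $\mul M\pr=0$, and the inclusion \eqref{eq:ass1} gives $\op{Ass}(N)\subset\op{Ass}(M)$, hence $\mul N\pr=0$ as well. Therefore $d_\pr=\mul Q\pr\geq 0$, and this is strictly positive exactly when $\pr\in\op{Ass}(Q)$, which is precisely the support condition demanded of $D$.

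The substance lies in showing $d_\pr=0$ when $\pr\in\op{Ass}(M)$, and this is where the hypothesis on $M$ is used. Because $M$ has no embedded primes, every associated prime of $M$ is minimal in $\op{Supp}(M)$; in particular $\op{Supp}(M_\pr)=\{\pr R_\pr\}$, so $M_\pr$ has finite length over the local ring $(R_\pr,\pr R_\pr)$. Consequently $N_\pr$ and $Q_\pr$ are also of finite length, and each of the three localizations coincides with its own zero-th local cohomology at $\pr R_\pr$. The localized exact sequence $0\to N_\pr\to M_\pr\to Q_\pr\to 0$ is therefore already an exact sequence of $\pr R_\pr$-torsion modules, and ordinary additivity of length on it yields $\mul M\pr=\mul N\pr+\mul Q\pr$, that is, $d_\pr=0$.

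The main obstacle to navigate is the failure of right-exactness of $\Gamma_{\pr R_\pr}$ in general: applied blindly, left-exactness (as exploited in \Lem{L:loccohgen}) gives only the inequality $\mul M\pr\leq\mul N\pr+\mul Q\pr$, with no control over the defect. The absence of embedded primes in $M$ is precisely what collapses the pertinent localizations into the finite-length regime, where left-exactness is enough to force full additivity. Without this hypothesis one could have $d_\pr<0$ at an embedded prime of $M$, and a correction cycle of the stated form would fail to exist.
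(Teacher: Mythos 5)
Your proof is correct and follows essentially the same route as the paper: check the coefficient $d_\pr$ prime by prime, using $\op{Ass}(N)\sub\op{Ass}(M)$ to handle $\pr\notin\op{Ass}(M)$, and using the minimality of associated primes of $M$ to get exact additivity of local multiplicities at $\pr\in\op{Ass}(M)$. The only (harmless) difference is that where the paper cites \Lem{L:loccohgen}, you re-derive the same fact directly from the observation that $M_\pr$, $N_\pr$, $Q_\pr$ all have finite length and coincide with their $\pr R_\pr$-torsion, which is precisely what the lemma's proof does.
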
 
\begin{proof}
Let $D$ be the cycle given by \eqref{eq:semiaddcyc}, so that $D$ has support in $\op{Ass}(M)\cup\op{Ass}(Q)$ by \eqref{eq:ass2}. We need to show that $D$ is effective and supported on $\op{Ass}(Q)\setminus\op{Ass}(M)$.
Since any associated prime $\pr$ of $M$ is minimal, $D$ is not supported in $\pr$ by \Lem{L:loccohgen}. On the other hand, any associated prime of $Q$ not in $\op{Ass}(M)$ appears with a positive coefficient in $D$, showing that the latter is   effective.
\end{proof}

\subsection*{Proof of \Thm{T:cohrk}}
Let us first  prove $ \len M\leq  \cohrk M$  by transfinite induction on $\cohrk M$, where the case $\cohrk M=0$ corresponds to $M=0$. Let $N$ be any non-zero submodule of $M$. By Lemma~\ref{L:lcord}, we have $\cohrk {M/N}<\cohrk M$, and hence our induction hypothesis applied to $M/N$ yields $\len{M/N}\leq \cohrk {M/N}<\cohrk M$. Since $\len{M/N}=\hd N$ by \eqref{eq:quot}, continuity yields that $\len M=\hd {\zeroid M}$ can be at most $\cohrk M$, as we needed to show. 

To prove the converse inequality, we induct on the length of $M$. Choose an associated prime $\pr$ of $M$ of minimal dimension, say, $\op{dim}(R/\pr)=e$. By assumption, there exists $m\in M$ such that $\ann Rm=\pr$. Let $H$ be the submodule of $M$ generated by $m$.  Since $H\iso R/\pr$, we get $\cohrk H=\omega^e$, and so by what we already proved, $\len{R/\pr}\leq \omega^e$. By \Thm{T:dim}, this then is an equality.  So we may assume that $Q:=M/H$ is non-zero.  By \Lem{L:loccohgen}, we get $\mul M\pr =\mul Q\pr-1$. 
By semi-additivity, we have an inequality
$$
\len {Q}\ordsum \len H\leq \len M
$$
and therefore, by induction
\begin{equation}\label{eq:IHcd}
\cohrk{Q}\ordsum \omega^e\leq \len M
\end{equation}
Let $\primary$ be any associated prime of $M$ different from  $\pr$. By minimality of dimension, $\primary$  cannot contain $\pr$. In particular, $M_\primary\iso Q_\primary$, whence $\mul M \primary  =\mul  Q\primary $. 
Let $\opord b:=\cohrk Q^{\geq e}$ (see \eqref{eq:splitord}). 
%
Putting together what we proved so far, we can find an ordinal ${\opord a}$ with $\low{\opord a}\geq e$ (stemming from primes associated to $Q$ but not to $M$), such that $\opord b\ssum \omega^e=\cohrk M\ssum{\opord a}$. Since   $\cohrk Q\ordsum \omega^e=\opord b\ssum \omega^e$, we get, from \eqref{eq:IHcd} and the first part,  inequalities
$$
\cohrk M\ssum {\opord a}\leq \len M\leq \cohrk M
$$
which forces ${\opord a}=0$ and  all inequalities to be equalities.\qed

\begin{corollary}\label{C:order}
The order of a  module is the smallest dimension of an associated prime, and its valence   is  the   degree of its fundamental cycle. \qed
\end{corollary}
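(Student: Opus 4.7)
The plan is to deduce both statements directly from the cohomological-rank formula of \Thm{T:cohrk}, by simply reading off the Cantor normal form of $\lenmod RM$. The only subtlety is to make sure that the shuffle sum indexed by associated primes already gives the Cantor normal form, i.e. that no ``carrying'' occurs: this is immediate because each summand $\mul M\pr \omega^{\dim(R/\pr)}$ is a single Cantor term, and grouping these terms by the value of $\dim(R/\pr)$ just adds up natural numbers in each $\omega^i$-slot.

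More precisely, I would first rewrite
\begin{equation*}
\lenmod RM=\cohrk M=\Ssum_{\pr\in\op{Ass}(M)}\mul M\pr\omega^{\dim(R/\pr)}
=\Ssum_{i\geq 0}\Bigl(\sum_{\substack{\pr\in\op{Ass}(M)\\ \dim(R/\pr)=i}}\mul M\pr\Bigr)\omega^i,
\end{equation*}
using \Thm{T:cohrk} and the definition of the shuffle sum via Cantor coefficients (see \S\ref{s:Ord}). Thus the $i$-th Cantor coefficient $\mul{\lenmod RM}i$ equals the sum of the local multiplicities $\mul M\pr$ taken over all $\pr\in\op{Ass}(M)$ with $\dim(R/\pr)=i$.

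Recall from \S\ref{s:cohrk} that $\mul M\pr\neq 0$ precisely when $\pr\in\op{Ass}(M)$. Hence the least $i$ with $\mul{\lenmod RM}i\neq 0$ is exactly the least $i$ for which some $\pr\in\op{Ass}(M)$ has $\dim(R/\pr)=i$; by definition this is $\order RM$, proving the first assertion. For the second, the valence is
\begin{equation*}
\val M=\sum_{i\geq 0}\mul{\lenmod RM}i=\sum_{\pr\in\op{Ass}(M)}\mul M\pr,
\end{equation*}
which is precisely $\op{deg}(\fcyc RM)$ by the definition of $\op{deg}$ on cycles. The main ``obstacle'' here is really just a bookkeeping check that the shuffle sum coincides with the Cantor normal form after grouping by dimension, which is immediate from the defining property $\mul{\opord a\ssum\opord b}i=\mul{\opord a}i+\mul{\opord b}i$ of the shuffle sum.
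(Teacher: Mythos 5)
Your proof is correct and is precisely the argument the paper intends: the corollary is stated with an immediate \qed\ because it follows by reading off the Cantor normal form from \Thm{T:cohrk}, exactly as you do. Your grouping of the shuffle sum by $\dim(R/\pr)$ and the observation that $\mul M\pr\neq 0$ \iff\ $\pr\in\op{Ass}(M)$ are the only points that need checking, and you handle both.
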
 

 By \cite[Proposition 1.2.13]{BH}, over a local ring, we have 
\begin{equation}\label{eq:depthorder}
\op{depth}(M)\leq \order {}M.
\end{equation} 
This inequality can be strict: for example, a two-dimensional domain which is not \CM, has depth one but order two by \Thm{T:dim}.  As an illustration of the use of \Thm{T:cohrk}, let us calculate the length of some special modules.  

\begin{proposition}\label{P:lenproj}
Suppose $P$ is a finitely generated projective module over a $d$-dimensional Noetherian ring $R$. If $P$ has rank $n$, then $\len P=n\cdot\len R$.
\end{proposition}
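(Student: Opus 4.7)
The plan is to reduce everything to the cohomological formula of \Thm{T:cohrk} and use the local freeness of a finitely generated projective module.

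First I would apply \Thm{T:cohrk} to rewrite both $\len P$ and $\len R$ as shuffle sums:
\[
\len P = \Ssum_{\pr} \mul P\pr\, \omega^{\dim(R/\pr)}, \qquad \len R = \Ssum_{\pr} \mul R\pr\, \omega^{\dim(R/\pr)}.
\]
The goal then becomes the cycle-level identity $\fcyc RP = n\cdot \fcyc RR$, after which the scalar multiplication rule \eqref{eq:scalord} immediately gives $\len P = n\cdot \len R$.

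Next I would exploit that a finitely generated projective module of rank $n$ over a Noetherian ring is locally free of constant rank: for every $\pr\in\op{Spec}(R)$ there is an isomorphism $P_\pr\cong R_\pr^{\,n}$ (interpreting ``rank $n$'' as constant rank, or working component by component on $\op{Spec}(R)$). Using the isomorphism \eqref{eq:lcloc} to transfer to the local setting, together with the fact that local cohomology commutes with finite direct sums, one obtains
\[
\hlc{P_\pr}0{\pr R_\pr} \iso \hlc{R_\pr}0{\pr R_\pr}^{\,n},
\]
and since ordinary length is additive on finite direct sums of finite length modules, $\mul P\pr = n\cdot \mul R\pr$ for every prime $\pr$. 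In particular this forces $\op{Ass}(P)=\op{Ass}(R)$, so both shuffle sums range over the same finite index set.

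Plugging these equalities into the Cantor-normal-form expressions and using \eqref{eq:scalord} then yields $\cohrk P = n\cdot \cohrk R$, which by \Thm{T:cohrk} is the desired identity. The only real subtlety is the convention about ``rank $n$''; assuming constant rank (automatic on each connected component of $\op{Spec}(R)$) makes the local computation uniform, and this is the step one has to be careful with, but no serious obstacle arises beyond that bookkeeping.
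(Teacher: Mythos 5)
Your proof is correct and is essentially identical to the paper's own argument: both reduce to the equality $\mul P\pr=n\,\mul R\pr$ via the local isomorphism $P_\pr\iso R_\pr^n$, and then conclude by \Thm{T:cohrk} and \eqref{eq:scalord}. The extra remarks about constant rank and additivity of local cohomology on direct sums are fine but not a departure from the paper's route.
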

 \begin{proof}
For any prime ideal $\pr$ of $R$, we have $P_\pr\iso R^n_\pr$, and hence $\mul P\pr=n\mul R\pr$. The result now follows from \Thm{T:cohrk} and \eqref{eq:scalord}.
\end{proof}

Note that this is false if $P$ is a projective module without rank, like the example $P=\zet/2\zet$ over the ring $R=\zet/6\zet$ already shows. By \Thm{T:cohrk}, any $d$-dimensional   local \CM\ ring $R$ has length $e\omega^d$, where $e=\mul Rd$ is its generic length (see \Cor{C:topadd} below), since any associated prime is $d$-dimensional (see, for instance, \cite[Theorem 17.4]{Mats}). In a future paper, we will show that $e$ is at most the multiplicity of $R$.

\begin{proposition}\label{P:lencan}
If a \CM\ local ring $R$ admits a canonical module $\omega_R$, then $\len R=\lenmod R{\omega_R}$.
\end{proposition}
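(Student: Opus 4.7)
The plan is to combine the cohomological rank formula from \Thm{T:cohrk} with standard facts about canonical modules. By that theorem,
\begin{equation*}
\len R=\Ssum_{\pr\in\op{Ass}(R)}\mul R\pr\,\omega^{\dim(R/\pr)}\qquad\text{and}\qquad \lenmod R{\omega_R}=\Ssum_{\pr\in\op{Ass}(\omega_R)}\mul{\omega_R}\pr\,\omega^{\dim(R/\pr)},
\end{equation*}
so it suffices to check that the two indexing sets agree and that the coefficients $\mul R\pr$ and $\mul{\omega_R}\pr$ coincide for each $\pr$ in that common set.

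First, since $R$ is \CM, every associated prime of $R$ is minimal of maximal dimension $d=\dim R$, and the standard properties of canonical modules give $\op{Ass}(\omega_R)=\op{Assh}(R)=\op{Ass}(R)$, all contributing the same power $\omega^d$ to the shuffle sums. Next, for each such minimal prime $\pr$, localization commutes with formation of the canonical module: $(\omega_R)_\pr\iso\omega_{R_\pr}$. Because $\pr$ is minimal, $R_\pr$ is a zero-dimensional (Artinian) local ring, and for Artinian local rings the canonical module is the injective hull $E_{R_\pr}(k(\pr))$ of the residue field.

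The key input is then Matlis duality over an Artinian local ring $(A,\maxim)$: the functor $\op{Hom}_A(-,E_A(k))$ is an exact, length-preserving duality between finitely generated modules, so in particular
\begin{equation*}
\op{len}_A(\omega_A)=\op{len}_A(E_A(k))=\op{len}_A(A).
\end{equation*}
Applying this to $A=R_\pr$ yields $\mul{\omega_R}\pr=\op{len}_{R_\pr}((\omega_R)_\pr)=\op{len}_{R_\pr}(R_\pr)=\mul R\pr$.

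Substituting this equality of local multiplicities termwise into the two shuffle sums gives $\len R=\lenmod R{\omega_R}$. I expect no real obstacle here; the only points that require care are the identification $\op{Ass}(\omega_R)=\op{Ass}(R)$ and the length-preservation of Matlis duality in the Artinian case, both of which are standard (see, e.g., \cite[Chapter~3]{BH}).
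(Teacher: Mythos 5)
Your proposal is correct and follows essentially the same route as the paper: identify $\op{Ass}(\omega_R)$ with $\op{Ass}(R)$, localize at each (minimal) associated prime to reduce to the Artinian case where $(\omega_R)_\pr$ is the injective hull of the residue field, use that this hull has the same length as $R_\pr$ (the paper cites \cite[Proposition 3.2.12(e)]{BH}, which is exactly the Matlis-duality fact you invoke), and conclude via \Thm{T:cohrk}.
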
 
\begin{proof}
Since $\omega_R$ is a maximal \CM\ module, it has the same associated primes $\pr$ as $R$. Given such a $\pr$, since $R_\pr$ is Artinian, and since canonical modules localize, $(\omega_R)_\pr$ is the injective hull of the residue field of $R_\pr$. By \cite[Proposition 3.2.12(e)]{BH}, the length of this injective hull is equal to that of $R_\pr$, showing that $\lcl\pr (R)=\lcl\pr(\omega_R)$, for every prime ideal $\pr$ of $R$. The result now follows from \Thm{T:cohrk}.
\end{proof}

  Our next result gives a constraint on the possible length  of a submodule, which is exploited in \cite{SchBinEndo}  to study binary modules. Let us say that $\opord a$ is \emph{weaker than} $\opord b$, denoted  $\opord a\aleq\opord b$,  if $\mul{\opord a}i\leq\mul{\opord b}i$, for all $i$. Clearly, $\opord a\aleq \opord b$ implies $\opord a\leq\opord b$, but the converse fails in general (e.g., $\omega$ is smaller than $\omega^2$ but not weaker than it).

\begin{theorem}\label{T:submod}
If $N\sub M$, then $\len N\preceq\len M$. Conversely, if ${\opord n}\aleq\len M$, then there exists a submodule $N\sub M$ of length ${\opord n}$. 
\end{theorem}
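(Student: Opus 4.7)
The first assertion is the easy half and follows directly from \Thm{T:cohrk}. For an inclusion $N\sub M$, \eqref{eq:ass1} gives $\op{Ass}(N)\sub\op{Ass}(M)$, and for each $\pr\in\op{Ass}(N)$ the inclusion $N_\pr\sub M_\pr$ yields $\lc{N_\pr}{\pr R_\pr}\sub\lc{M_\pr}{\pr R_\pr}$ and hence $\mul N\pr\leq\mul M\pr$. Grouping by the dimensions $\dim(R/\pr)$ gives $\mul{\len N}i\leq\mul{\len M}i$ for every $i$, which is exactly $\len N\aleq\len M$.

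For the converse, given ${\opord n}\aleq\len M$, the inequality $\mul{\opord n}i\leq\sum_{\dim(R/\pr)=i}\mul M\pr$ at each $i$ lets me distribute ${\opord n}$ over associated primes, picking integers $0\leq n_\pr\leq\mul M\pr$ for $\pr\in\op{Ass}(M)$ such that ${\opord n}=\Ssum_\pr n_\pr\omega^{\dim(R/\pr)}$. I then fix an irredundant primary decomposition $0=Q_1\cap\dots\cap Q_s$ of the zero submodule of $M$, with $Q_i$ being $\pr_i$-primary, and set $N_i:=\bigcap_{j\neq i}Q_j$. Since $N_i\cap Q_i=0$, the injection $N_i\hookrightarrow M/Q_i$ combined with $\op{Ass}(M/Q_i)=\{\pr_i\}$ gives $\op{Ass}(N_i)=\{\pr_i\}$, while a similar intersection argument yields $N_i\cap N_j=0$ for $i\neq j$, so that $\bigoplus_iN_i$ sits inside $M$ as an internal direct sum.

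The key technical step is to show $\mul{N_i}{\pr_i}=\mul M{\pr_i}$ for every $i$. I would prove this by localizing the exact sequence $\Exactseq{N_i}M{M/N_i}$ at $\pr_i$ and applying $\lc{-}{\pr_iR_{\pr_i}}$: since $N_i$ is $\pr_i$-primary, its localization $(N_i)_{\pr_i}$ coincides with its own $\pr_iR_{\pr_i}$-torsion, and left exactness of local cohomology therefore reduces the claim to showing $\lc{(M/N_i)_{\pr_i}}{\pr_iR_{\pr_i}}=0$. Embedding $M/N_i\hookrightarrow\bigoplus_{j\neq i}M/Q_j$, this in turn follows from the vanishing of $\lc{(M/Q_j)_{\pr_i}}{\pr_iR_{\pr_i}}$ for each $j\neq i$, which holds because $(M/Q_j)_{\pr_i}$ has its associated primes contained in $\{\pr_jR_{\pr_i}\}$, a set excluding $\pr_iR_{\pr_i}$ since $\pr_j\neq\pr_i$. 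This identity is the main obstacle, as the presence of embedded primes would otherwise leave room for the primary components $N_i$ to shrink at $\pr_i$.

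Granted the claim, $(N_i)_{\pr_i}$ is an $R_{\pr_i}$-module of finite length $\mul M{\pr_i}$, so for each $k\leq\mul M{\pr_i}$ I pick an $R_{\pr_i}$-submodule $L'\sub(N_i)_{\pr_i}$ of length $k$ and set $L_i^{(k)}:=\phi_i^{-1}(L')\sub N_i$, where $\phi_i\colon N_i\to(N_i)_{\pr_i}$ is the localization map. A routine verification shows $(L_i^{(k)})_{\pr_i}=L'$, whence $\mul{L_i^{(k)}}{\pr_i}=k$ and $\op{Ass}(L_i^{(k)})\sub\{\pr_i\}$. Setting $N:=\bigoplus_iL_i^{(n_{\pr_i})}\sub M$, and observing that $L_j^{(n_{\pr_j})}$ contributes nothing at $\pr_i$ for $j\neq i$, a summand-wise computation combined with \Thm{T:cohrk} gives $\len N={\opord n}$, as required.
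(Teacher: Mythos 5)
Your proof is correct, but it takes a genuinely different route for the second assertion. The paper argues by induction on the (finitely many) ordinals $\aleq\len M$: it takes a submodule $H$ maximal among those of length ${\opord q}$, where ${\opord n}={\opord q}\ssum\omega^i$ and $i=\order{}{\opord n}$, uses \Thm{T:cohrk} to find an $i$-dimensional associated prime $\pr$ at which $H$ is not yet ``full'', adjoins an element $x$ with $\pr x\sub H$, and squeezes $\len{H+Rx}$ between ${\opord q}$ and ${\opord n}$ via semi-additivity and maximality of $H$. You instead construct $N$ in one shot from an irredundant primary decomposition $0=Q_1\cap\dots\cap Q_s$: the modules $N_i=\bigcap_{j\neq i}Q_j$ are $\pr_i$-coprimary, their sum is direct (one remark: pairwise trivial intersections alone would not force directness; what does is the stronger containment $\sum_{j\neq i}N_j\sub Q_i$ together with $N_i\cap Q_i=0$, which you should state explicitly), each satisfies $\mul{N_i}{\pr_i}=\mul M{\pr_i}$ by your localization argument, and pulling back finite-length submodules of $(N_i)_{\pr_i}$ realizes any prescribed local multiplicities $n_{\pr_i}\leq\mul M{\pr_i}$. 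The paper's induction stays entirely inside the length machinery (semi-additivity plus \Thm{T:cohrk}) and needs no primary decomposition; your construction is more explicit and in fact proves the sharper, cycle-level statement that every effective cycle $D\aleq\fcyc{}M$ occurs as $\fcyc{}N$ for some $N\sub M$ (cf.\ \Rem{R:submod}), which is strictly more information, since distinct cycles can yield the same ordinal under $\binord\cdot$.
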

\begin{proof}
The first assertion is immediate from \Thm{T:cohrk}, inclusion \eqref{eq:ass1}, and the fact that \eqref{eq:lc} is always left exact.
For the second assertion, let ${\opord m}:=\len M$. 
 We induct on the (finite collection)  of ordinals ${\opord n}$ weaker than ${\opord m}$ to show that there exists a submodule of that length. The case ${\opord n}=0$ being trivial, we may assume ${\opord n}\neq 0$.
Let $i$ be the order of ${\opord n}$ and write ${\opord n}={\opord q}\ssum\omega^i$ for some ${\opord q}\aleq{\opord n}$. Since then ${\opord q}\aleq{\opord m}$, there exists a submodule of length ${\opord q}$ by induction. Let  $H\sub M$ be   maximal among all submodules of   length ${\opord q}$. By \Thm{T:cohrk}, there exists an $i$-dimensional associated prime $\pr$ of   $M$, such that $\hlc{H_\pr}0{\pr R_\pr}$ is strictly contained in $\hlc{M_\pr}0{\pr R_\pr}$. Hence we can find $x\in M$ outside $H$ such that $\pr x\sub H$. Let $N:=H+Rx$ and let $\bar x$ be the image of $x$ modulo $H$, so that  $N/H=R\bar x$. Since $\pr\bar x=0$, the length of $R\bar x$ is at most $\omega^i$. By semi-additivity applied to the inclusion $H\sub N$, we have an inequality $\len N\leq {\opord q}\ssum\ \len{R\bar x}$, and hence $\len N\leq {\opord n}$. Maximality of $H$ yields  ${\opord q}<\len N$. On the other hand, since   $\len N\preceq{\opord m}$ by our first assertion, minimality of $i$ then  forces $\len N={\opord n}$, as we needed to show. 
\end{proof}  

\begin{remark}\label{R:submod}
In fact, if $N\sub M$, then $\fcyc {}N\aleq\fcyc {}M$, so that the   fundamental cycle map  is a morphism $\gr M^\circ\to \mathcal A(R)$ of partially ordered sets, where $\gr M^\circ$ is the opposite order given by inclusion.  On the other hand, by \eqref{eq:quot} and \Thm{T:cohrk}, the map $\gr M\to \mathcal A(R)$ given by $N\mapsto \fcyc{}{M/N}$ factors through the length map $\gr M\to \ord$, but there is no natural ordering on $\mathcal A(R)$ for which this becomes a map of ordered sets.
\end{remark} 

We may improve the lower semi-additivity by replacing $\leq $ by $\aleq$:

\begin{corollary}\label{C:semadd}
If $\Exactseq NMQ$ is exact, then   $  \len Q\ordsum \len N\aleq \len M$.
\end{corollary}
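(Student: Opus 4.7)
The plan is to use \Thm{T:cohrk} to translate the claim $\len Q\ordsum\len N\aleq\len M$ into a coefficient-by-coefficient inequality on Cantor normal forms, and then to verify it degree by degree. Setting $d:=\dim N$, so that $\deg\len N=d$ by \Thm{T:dim}, the gobbling rule for ordinal sum yields
\begin{equation*}
\mul{\len Q\ordsum\len N}i=\begin{cases}\mul{\len N}i,&i<d,\\ \mul{\len N}d+\mul{\len Q}d,&i=d,\\ \mul{\len Q}i,&i>d.\end{cases}
\end{equation*}
Since \Thm{T:cohrk} writes $\mul{\len M}i=\sum_{\dim(R/\pr)=i}\mul M\pr$, and similarly for $N$ and $Q$ (sums ranging over all primes of dimension $i$, with $\mul X\pr=0$ when $\pr\notin\op{Ass}(X)$), it will suffice to check $\mul{\len Q\ordsum\len N}i\leq\mul{\len M}i$ for each $i$.

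For $i<d$, I would combine the inclusion $\op{Ass}(N)\sub\op{Ass}(M)$ from \eqref{eq:ass1} with the left-exactness of $\lc\cdot\pr$ (which gives $\mul N\pr\leq\mul M\pr$ for every prime $\pr$) and sum over primes of dimension $i$ to deduce $\mul{\len N}i\leq\mul{\len M}i$. For $i>d$, any prime $\pr$ with $\dim(R/\pr)>d$ lies outside $\op{Supp}(N)$, because every prime in $\op{Supp}(N)$ contains some associated prime of $N$ and hence has dimension at most $d$. Hence $N_\pr=0$, $M_\pr\iso Q_\pr$, and $\mul M\pr=\mul Q\pr$, yielding even the equality $\mul{\len M}i=\mul{\len Q}i$.

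The bulk of the work sits in the case $i=d$, which I would handle by partitioning the primes $\pr$ of dimension $d$ according to whether or not they belong to $\op{Ass}(N)$. If $\pr\in\op{Ass}(N)$, maximality of its dimension forces $\pr$ to be a \emph{minimal} prime of $\op{Supp}(N)$ (a strictly smaller associated prime would have strictly larger dimension, contradicting $\dim N=d$), so \Lem{L:loccohgen} upgrades the left-exact sequence at $\pr$ to the short exact sequence \eqref{eq:lc} and delivers $\mul M\pr=\mul N\pr+\mul Q\pr$. If $\pr\notin\op{Ass}(N)$, the argument of the previous paragraph shows that $\pr\notin\op{Supp}(N)$, so $\mul N\pr=0$ and $\mul M\pr=\mul Q\pr$, which trivially equals $\mul N\pr+\mul Q\pr$. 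Summing over all primes of dimension $d$ then produces $\mul{\len M}d=\mul{\len N}d+\mul{\len Q}d$, which closes the case and, with it, the proof. The one step that requires care is exactly this dichotomy at $i=d$: recognising that maximal-dimensional associated primes of $N$ are automatically minimal is what activates the right-exactness in \Lem{L:loccohgen}, and without that input the zero-th local cohomology functor is only left exact and the sharper equality fails.
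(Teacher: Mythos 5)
Your proof is correct, but it takes a genuinely different route from the paper's. The paper treats \Cor{C:semadd} as ``really just a fact about ordinals'': it takes the two inequalities already in hand --- semi-additivity $\len Q\ordsum\len N\leq\len M\leq\len Q\ssum\len N$ from \Thm{T:semadd} and $\len N\aleq\len M$ from \Thm{T:submod} --- and shows by pure Cantor-normal-form manipulation that any three ordinals satisfying them must also satisfy $\len Q\ordsum\len N\aleq\len M$. You instead go back to the source, \Thm{T:cohrk}, and verify the coefficient inequality degree by degree via local multiplicities; your key observations (that primes of dimension $>\dim N$ miss $\op{Supp}(N)$, and that associated primes of $N$ of maximal dimension are minimal in $\op{Supp}(N)$, so that \Lem{L:loccohgen} upgrades left-exactness to the exact sequence \eqref{eq:lc}) are all sound, and the dichotomy at $i=d$ is handled correctly. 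What your approach buys is strictly more information: at degree $d=\dim N$ you establish the \emph{equality} $\mul Md=\mul Nd+\mul Qd$, and for $i>d$ the equality $\mul Mi=\mul Qi$ --- which is precisely \Prop{P:topadd}\eqref{i:r} and part of \eqref{i:rplus}, results the paper proves separately (there by ordinal arithmetic from semi-additivity). What the paper's softer argument buys is formal generality: it isolates the purely combinatorial implication among ordinals and would survive in any setting where the two black-box inequalities hold, without reopening the cohomological machinery. Both ultimately rest on \Thm{T:cohrk}, so there is no issue of circularity either way.
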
 
\begin{proof}
This is really just a fact about ordinals. Let  ${\opord m},{\opord n},{\opord q}$ be the respective lengths of $M$, $N$, $Q$. By semi-additivity, ${\opord q}\ordsum {\opord n}\leq{\opord m}\leq{\opord q}\ssum{\opord n}$, whereas \Thm{T:submod} gives ${\opord n}\aleq{\opord m}$, and we now show that these inequalities imply that ${\opord q}\ordsum {\opord n}\aleq{\opord m}$.  Write ${\opord m}={\opord n}\ssum{\opord a}$ and let $d$ be the dimension of ${\opord n}$. 
For an arbitrary ordinal ${\opord b}$,  we have a unique decomposition ${\opord b}={\opord b}^{\geq d}\ordsum{\opord b}^{<d}$ as described in \eqref{eq:splitord}. 
By assumption, ${\opord n}^{\geq d}=a\omega^d$ with $a=\mul{\opord n}d$, and hence the semi-additivity inequalities at  degree $d$ and higher become ${\opord q}^{\geq d}\ssum  a\omega^d\leq {\opord a}^{\geq d}\ssum a\omega^d\leq{\opord q}^{\geq d}\ssum a\omega^d$, showing that ${\opord q}^{\geq d}={\opord a}^{\geq d}$. 
By definition of ordinal sum, ${\opord q}\ordsum {\opord n}={\opord q}^{\geq d}\ssum{\opord n}$ and so 
$$
({\opord q}\ordsum {\opord n})\ssum{\opord a}^{< d}={\opord q}^{\geq d}\ssum{\opord n}\ssum{\opord a}^{<d}={\opord a}^{\geq d}\ssum{\opord a}^{<d}\ssum{\opord n}={\opord a}\ssum{\opord n}={\opord m}
$$
proving that ${\opord q}\ordsum {\opord n}\aleq{\opord m}$. 
\end{proof} 

To give a more detailed version of semi-additivity, let us write $\mul Mi:=\mul  {\len M}i$ for the $i$-th Cantor coefficient of $\len M$. By \Thm{T:cohrk}, each $\mul Mi$ is equal to the sum of all $\mul M\pr  $, where $\pr$ runs over all $i$-dimensional associated primes of $M$.


\begin{proposition}\label{P:topadd}
Let $\Exactseq NMQ$ be an exact sequence of modules and let $r,d,s$ be the respective dimensions of $N$, $M$, and $Q$. 
\begin{enumerate}
\item\label{i:r} $\mul Mr=\mul Nr+\mul Qr$, 
\item\label{i:rplus} If $r<d$, then $d=s$ and $\mul Mi=\mul Qi$, for all $r<i\leq d$,
\item\label{i:qplus} If $s<d$, then $r=d$ and $\mul Ni=\mul Mi$, for all $s<i\leq d$.
\end{enumerate}
\end{proposition}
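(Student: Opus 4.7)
The plan is to read off each claim by comparing Cantor coefficients on the two sides of the semi-additivity sandwich from \Thm{T:semadd} and \Cor{C:semadd}. Throughout I write $m_i:=\mul Mi$, $n_i:=\mul Ni$, and $q_i:=\mul Qi$. Before starting, I would verify that $d=\max(r,s)$: the inclusions $\op{Ass}(N)\sub\op{Ass}(M)\sub\op{Ass}(N)\cup\op{Ass}(Q)$ of \eqref{eq:ass1}--\eqref{eq:ass2}, combined with $\op{Supp}(Q)\sub\op{Supp}(M)$, give $r\leq d$, $s\leq d$, and $d\leq\max(r,s)$.

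Next, I would record the Cantor coefficients of the two bounding ordinals. By definition of the shuffle sum, $\mul{(\len Q\ssum\len N)}i=q_i+n_i$. For the ordinal sum, since $\deg\len N=r$, every term of $\len Q$ of degree strictly below $r$ gets absorbed by $n_r\omega^r$, which yields
\[
\mul{(\len Q\ordsum\len N)}i=\begin{cases}q_i&\text{if }i>r,\\ q_r+n_r&\text{if }i=r,\\ n_i&\text{if }i<r.\end{cases}
\]
In the extreme case $s<r$ the whole of $\len Q$ is swallowed and $\len Q\ordsum\len N=\len N$. \Cor{C:semadd} then supplies the coefficient-wise lower bound $\mul{(\len Q\ordsum\len N)}i\leq m_i$, while \Thm{T:semadd} yields only the ordinal upper bound $\len M\leq\len Q\ssum\len N$, which must be converted into coefficient data by lexicographic descent from the top degree.

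I would first handle (ii) and (iii) in parallel. Assume (ii), so $r<d$, hence $d=s>r$, and the lower bound reads $q_i\leq m_i$ for every $i>r$. Since both sides vanish above degree $s$, ordinal comparison at $i=s$ forces $m_s\leq q_s$, giving $m_s=q_s>0$; this also pins $d=s$. Arguing downward, once $m_j=q_j$ has been shown for every $s\geq j>i$ with $i>r$, the lex comparison at degree $i$ reduces to $m_i\leq q_i+n_i=q_i$, whence $m_i=q_i$. Claim (iii) is formally symmetric: with $r=d>s$ we have $\len Q\ordsum\len N=\len N$ from the formula above, so $n_i\leq m_i$ (equivalently via \Thm{T:submod}), and the identical lex descent starting from $m_r=n_r$ yields $m_i=n_i$ for $s<i\leq r$.

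Claim (i) then drops out by evaluating at degree $r$. In each configuration $r<s$, $r=s$, or $r>s$, the lower bound at $i=r$ reads $q_r+n_r\leq m_r$; the matching upper bound $m_r\leq q_r+n_r$ comes from lex comparison against $\len Q\ssum\len N$ once the strictly higher coefficients have been identified. This is trivial when $r=d$ is the leading degree, follows from (ii) when $r<d$, and follows from (iii) together with $q_r=0$ when $r>s$. The main subtlety throughout is exactly this translation from the ordinal inequality in \Thm{T:semadd} to coefficient equalities: being a lex statement, each coefficient can only be pinned down after all strictly higher coefficients are already known, so the induction has to be carried out from the top degree downward and interleaved with the coefficient-wise lower bound from \Cor{C:semadd}.
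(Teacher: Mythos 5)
Your argument is correct and follows essentially the same route as the paper: both proofs sandwich $\len M$ between $\len Q\ordsum\len N$ and $\len Q\ssum\len N$, write out the Cantor coefficients of the two bounds, and pin down $\mul Mi$ degree by degree from the top, using that the two bounds agree on all degrees above $\min(r,s)$. The only cosmetic differences are that you route the lower bound through \Cor{C:semadd} where the paper uses the raw ordinal inequality together with a cancellation of the common high-degree part, and that you derive \eqref{i:rplus} and \eqref{i:qplus} before \eqref{i:r}; neither change affects the substance.
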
 
\begin{proof}
Write the length of $N$, $M$, and $Q$, respectively as $\opord n:=\Splus_{i=0}^r n_i\omega^i$, $\opord m:=\Splus_{i=0}^d m_i\omega^i$, and  $\opord q:=\Splus_{i=0}^s q_i\omega^i$, with $n_r,m_d,q_s\neq0$. In particular
$$
\opord q\ordsum\opord n=\Splus_{i=r+1}^d q_i\omega^i\splus (q_r+n_r)\omega^r\splus\Splus_{i<r}n_i\omega^i.
$$
By semi-additivity, we have 
\begin{equation}\label{eq:semaddqnm}
\opord q\ordsum\opord n\leq\opord m\leq\opord q\splus\opord n, 
\end{equation} 
and the first two relations follow now easily by comparing Cantor coefficients. 

So assume $s<d$, so that $\opord q\ordsum\opord n=\opord n$. We prove by downward induction on $t>s$ that $n_t=m_t$. The case $t=d$ is covered by \eqref{i:r}. So assume we have already proven this for all $i>t$. In particular,  each of the  three ordinals in \eqref{eq:semaddqnm} have the same  part of degree $t+1$ and higher, and so we may subtract it from each of them. The resulting inequality has become
$$
n_t\omega^t+\opord n^{<t}\leq m_t\omega^t+\opord m^{<t}\leq n_t\omega^t+\opord v
$$
with $\opord v$ of degree at most $t-1$ (note that by assumption $\op{deg}(\opord q)\leq t-1$). Hence, the leading coefficients must  be equal, that is to say, $n_t=m_t$, and so we are done by induction.
\end{proof}


Since $\dim N\leq \dim R$, we immediately get:

\begin{corollary}[Top additivity]\label{C:topadd}
If $R$ has dimension $d$, then $\mul\cdot d$ (called   the \emph{generic length}) is additive on exact sequences.\qed
\end{corollary}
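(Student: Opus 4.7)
The plan is to deduce Top Additivity directly from \Prop{P:topadd} by a short case analysis on the dimensions of the three modules, supplemented by one elementary observation: the Cantor coefficient $\mul X d$ vanishes whenever $\dim X < d$. This observation is immediate from \Thm{T:dim}, since the degree of $\lenmod R X$ equals $\dim X$, so every Cantor coefficient of $\lenmod R X$ in degrees strictly above $\dim X$ is zero.

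Concretely, given an exact sequence $\Exactseq NMQ$ with $R$ of dimension $d$, let $r$, $e$, $s$ be the dimensions of $N$, $M$, $Q$ respectively, each bounded by $d$. If $e < d$, then automatically $r, s < d$ as well, so by the observation above all three coefficients $\mul N d$, $\mul M d$, $\mul Q d$ vanish and the identity holds trivially. Otherwise $e = d$, and I split into three overlapping subcases: if $r = s = d$, the first clause of \Prop{P:topadd} gives $\mul M d = \mul N d + \mul Q d$ on the nose; if $r < d$, then $\mul N d = 0$ and the second clause of \Prop{P:topadd} forces $s = d$ with $\mul M d = \mul Q d$, so the identity reduces to $\mul Q d = 0 + \mul Q d$; the case $s < d$ is handled symmetrically via the third clause. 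These subcases exhaust all possibilities when $e = d$.

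Since \Prop{P:topadd} has absorbed the genuine content, there is no real obstacle here—the corollary is pure book-keeping, amounting to extracting the top-degree Cantor coefficient in each case and using the fact that modules of strictly smaller dimension contribute nothing in that degree.
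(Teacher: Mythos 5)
Your proof is correct and follows the paper's intended route: the paper deduces the corollary ``immediately'' from \Prop{P:topadd} using only the remark that all three dimensions are bounded by $\dim R$, and your case analysis (together with the observation from \Thm{T:dim} that $\mul Xd=0$ once $\dim X<d$) is exactly the book-keeping that justification leaves implicit.
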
 

I conclude this section with an example illustrating how length captures the nilpotent structure of a ring (the proof will be given in a future paper on the length of monomial algebras).

\begin{example}\label{E:len}
Let $R$ be the quotient of the polynomial ring $\pol k{x,y,z}$ modulo the monomial ideal
$$
I:=(x^5yz,x^4y^3z,x^3y^2z^2,x^2y^3z^2, x^2y^2z^3, x^4yz^4,x^6z^4,x^5z^5).
$$
Without proof we state that the minimal primes are $(x)$ and $(z)$ with respective local multiplicities $2$ and $1$; the one-dimensional primes are $(x,y)$, $(x,z)$, $(y,z)$, with respective multiplicities $5$, $1$, and $3$;  and the maximal ideal $(x,y,z)$ has local multiplicity $7$. Therefore, by \Thm{T:cohrk}, the  length is  
$$
\len R=3\omega^2+9\omega+7.
$$
\end{example} 

\section{Base change}
The behavior of length under base change is intricate, and so we will only discuss some basic facts. Recall that a \homo\ $R\to S$ is called \emph{cyclically pure}, if $I=IS\cap R$ for every ideal $I\sub R$. Faithfully flat maps are examples of cyclically pure \homo{s}.

\begin{lemma}\label{L:ff}
If $R\to S$ is   cyclically pure, then $\len R\leq\len S$.
\end{lemma}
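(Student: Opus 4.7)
The plan is to apply Theorem~\ref{T:inc} to the extension-of-scalars map on ideals, exploiting cyclic purity exactly at the point where one needs strict increase.

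Concretely, I would define
\[
\phi\colon \grass R R \to \grass S S, \qquad I \mapsto IS,
\]
and verify that it fits the hypotheses of Theorem~\ref{T:inc}, where both posets carry the reverse-inclusion order. Order-preservation is immediate: if $I \leq J$ in $\grass R R$, i.e.\ $I \supseteq J$, then clearly $IS \supseteq JS$, i.e.\ $\phi(I) \leq \phi(J)$ in $\grass S S$. The point where cyclic purity enters is \emph{strict} increase: if $I \supsetneq J$ but $IS = JS$, then contracting back to $R$ gives $I = IS \cap R = JS \cap R = J$, a contradiction. Hence $\phi$ is strictly increasing.

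Now I would apply Theorem~\ref{T:inc} with $P = \grass R R$ and $Q = \grass S S$. In the reverse-inclusion order, the minimum of $\grass R R$ is $\bot_P = R$, and $\phi(R) = RS = S$, which is the minimum $\bot_Q$ of $\grass S S$; hence $\hdmod Q{\phi(\bot_P)} = 0$. Taking $a = \zeroid R$ (the zero ideal, which is the maximum of $\grass R R$ in reverse inclusion), we have $\phi(\zeroid R) = \zeroid S$, and by definition $\hdmod P{\zeroid R} = \len R$ and $\hdmod Q{\zeroid S} = \len S$. Theorem~\ref{T:inc} then yields
\[
\len R \;=\; 0 \ordsum \hdmod P{\zeroid R} \;\leq\; \hdmod Q{\phi(\zeroid R)} \;=\; \len S,
\]
which is the required inequality.

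No step is really an obstacle here; the whole argument is a one-line application of Theorem~\ref{T:inc} once strict monotonicity of $I\mapsto IS$ is established, and that is precisely the content of cyclic purity (applied to a single ideal $I$, not to all of them — indeed, only the cyclic purity of the containments $I\subseteq IS\cap R$ is used). It is also worth noting that the same argument works verbatim to show $\lenmod R M \leq \lenmod S{M\otimes_R S}$ for any Noetherian $R$-module $M$, provided $N = NS \cap M$ for every submodule $N \subseteq M$ (the module-theoretic analogue of cyclic purity), but this stronger conclusion is not what is being claimed here.
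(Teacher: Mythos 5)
Your proof is correct and follows exactly the paper's argument: the paper likewise sends $I\mapsto IS$, notes that cyclic purity makes this map injective (hence strictly increasing), and invokes Theorem~\ref{T:inc}. The only quibble is your closing parenthetical: the containment $I\subseteq IS\cap R$ is automatic, and what cyclic purity supplies is the reverse inclusion $IS\cap R\subseteq I$ — but this does not affect the argument.
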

\begin{proof}
Consider the  canonical map $f\colon  \gr R \to \gr S$ given by sending an ideal $I\sub R$ to its extension $IS$.    Being cyclically pure means that  $f$ is injective, and the inequality is
 now immediate from Theorem~\ref{T:inc}.
\end{proof}

Even under extensions of scalars can this be a strict inequality: let $R$ be the domain $\pol{\mathbb R}{x,y}/(x^2+y^2)$ and note that $R\tensor_{\mathbb R}\mathbb C$ is not a domain and hence must have  length different from $\len R=\omega$  by \Thm{T:dim}. Using \Thm{T:cohrk}, one easily calculates that $\len{R\tensor_{\mathbb R}\mathbb C}=2\omega$. A similar phenomenon occurs for completion: let $R$ be the analytically reduced domain $\pol k{x,y}/(x^2+y^2+y^3)$, so that its completion $\complet R$ is not a domain and hence has length different from $\len R=\omega$ (again using \Thm{T:cohrk}, one calculates that $\len{\complet R}=2\omega$). Another factor to take into account, is that the rings might have different dimension.  Apart from the latter, length behaves well under polynomial extensions:

\begin{corollary}\label{C:polext}
Let $R$ be a finite-dimensional Noetherian ring and $t$ an $n$-tuple of variables. Then $\len {\pol Rt}=\omega^n\cdot \len R $. 
\end{corollary}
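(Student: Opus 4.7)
The plan is to apply \Thm{T:cohrk} to both sides of the claimed equality and match Cantor coefficients. An easy induction on $n$ using $\pol R{t_1,\dots,t_n}=\pol{\pol R{t_1,\dots,t_{n-1}}}{t_n}$ together with the identity $\omega^m\cdot(\omega^n\cdot\opord a)=\omega^{m+n}\cdot\opord a$ (immediate from \eqref{eq:multord}) reduces the statement to the case $n=1$; throughout, set $S:=\pol Rt$.

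The first two ingredients are essentially free.  First, $\op{Ass}(S)=\{\pr S\mid\pr\in\op{Ass}(R)\}$: for $\supseteq$, if $\pr=\ann Ra$ with $a\in R$, then $f=\sum b_it^i\in S$ kills $a$ iff each $b_i\in\pr$, so $\pr S=\ann Sa$; for $\subseteq$, flatness of $R\to S$ forces any $\PR\in\op{Ass}(S)$ to contract to some $\pr\in\op{Ass}(R)$, while inside the domain $S/\pr S=\pol{(R/\pr)}t$ the only associated prime is zero, yielding $\PR=\pr S$. Second, $\dim(S/\pr S)=\dim(R/\pr)+1$ by the classical dimension formula for polynomial extensions over a Noetherian ring. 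So \Thm{T:cohrk} rewrites the left hand side as
\begin{equation*}
\len S=\Ssum_{\pr\in\op{Ass}(R)}\mul S{\pr S}\,\omega^{\dim(R/\pr)+1},
\end{equation*}
which matches $\omega\cdot\len R=\Ssum_\pr\mul R\pr\,\omega^{\dim(R/\pr)+1}$ (obtained from \Thm{T:cohrk} and \eqref{eq:multord}) precisely when $\mul S{\pr S}=\mul R\pr$ for every $\pr\in\op{Ass}(R)$.

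Proving this last identity is the only genuine computation, and is where the main difficulty lies. Since a polynomial $f=\sum b_it^i$ is killed by $\pr^k$ iff each coefficient $b_i$ is, we have $\lc{S}{\pr S}=\lc R\pr\tensor_R S$; localizing at $\pr S$ and using \eqref{eq:lcloc} gives
\begin{equation*}
\lc{S_{\pr S}}{\pr S_{\pr S}}\iso \lc{R_\pr}{\pr R_\pr}\tensor_{R_\pr}S_{\pr S}.
\end{equation*}
The $R_\pr$-module $N:=\lc{R_\pr}{\pr R_\pr}$ has length $\mul R\pr$ and therefore admits a composition series whose subquotients are all copies of the residue field $k:=R_\pr/\pr R_\pr$. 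Since $S_{\pr S}$ is flat over $R_\pr$, tensoring preserves the length of this filtration; each subquotient becomes $S_{\pr S}/\pr S_{\pr S}$, which is the residue field $k(t)$ of the local ring $S_{\pr S}$---in particular, its only simple module. Consequently $\mul S{\pr S}=\mul R\pr$, and the corollary follows.
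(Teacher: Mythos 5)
Your proposal is correct and follows essentially the same route as the paper: reduce to a single variable, identify $\op{Ass}(\pol Rt)$ with the extensions $\pr\pol Rt$ for $\pr\in\op{Ass}(R)$, note that the dimension of each $\pol Rt/\pr\pol Rt$ goes up by one, check that the local multiplicities are preserved, and apply \Thm{T:cohrk}. The only difference is that you supply the details (via flat base change of a composition series of $\lc{R_\pr}{\pr R_\pr}$) for the equality $\mul{\pol Rt}{\pr\pol Rt}=\mul R\pr$, which the paper asserts without proof.
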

\begin{proof}
By an inductive argument, it suffices to treat the case that $t$ is a single variable. The associated primes of $\pol Rt$ are precisely the extensions $\pr\pol Rt$ with $\pr\in\op{Ass}(R)$. Moreover, $\lcl {\pol Rt}(\pr\pol Rt)=\lcl R(\pr)$. Since $\pol Rt/\pr\pol Rt\iso \pol{(R/\pr)}t$,   its dimension is equal to $\dim {(R/\pr)}+1$, and the result now follows from \Thm{T:cohrk}.
\end{proof}

We can now identify one useful class of extensions that preserve length: recall that a \emph{Nagata extension} of a local ring $(R,\maxim)$ is a localization of some polynomial extension $\pol Rt$  with respect to the prime ideal $\maxim\pol Rt$, and will be denoted $R(t)$, where $t$ is some tuple of variables. The extension $R\to R(t)$ is a scalar extension in the terminology of \cite{SchUlBook}, that is to say, faithfully flat and unramified. In particular, both rings have the same dimension, and so an argument similar but easier as the above gives:

\begin{corollary}\label{C:Nagext}
Nagata extensions do not change the length, that is to say, $\len R=\len{R(t)}$, for any Noetherian local ring $R$ and any tuple of variables $t$.\qed
\end{corollary}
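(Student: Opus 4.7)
The plan is to compare the two lengths through their cohomological ranks via \Thm{T:cohrk}. Since $R\to R(t)$ is a scalar extension, $R(t)$ has the same Krull dimension as $R$, so we need only set up a bijection between associated primes that preserves both the codimension $\dim(R/\pr)$ and the local multiplicity $\mul R\pr$.

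The first step is to identify $\op{Ass}(R(t))$. Given $\pr\in\op{Spec}(R)$, a direct computation with the localization shows that $R[t]\setminus \pr R[t]$ already contains $R[t]\setminus \maxim R[t]$, so $R(t)/\pr R(t)=(R/\pr)(t)$, which is a Nagata extension of the domain $R/\pr$ and therefore itself a domain. By flat base change for associated primes, every element of $\op{Ass}(R(t))$ lies over some $\pr\in\op{Ass}(R)$, and since $(R/\pr)(t)$ has only the zero ideal as associated prime, we obtain a bijection $\pr\mapsto \pr R(t)$ between $\op{Ass}(R)$ and $\op{Ass}(R(t))$.

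The second step handles dimensions: since $R/\pr\to(R/\pr)(t)=R(t)/\pr R(t)$ is again a scalar extension, $\dim(R(t)/\pr R(t))=\dim(R/\pr)$.

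The third and main step matches the local multiplicities. By the same localization computation one verifies $R(t)_{\pr R(t)}=R_\pr(t)$, reducing the problem to the local case: for $(R,\maxim)$ local, I must show $\mul R\maxim=\mul{R(t)}{\maxim R(t)}$. Since $R\to R(t)$ is flat, local cohomology commutes with it and gives $\lc{R(t)}{\maxim R(t)}=\lc R\maxim\tensor_R R(t)$. The module $\lc R\maxim$ has finite $R$-length $e=\mul R\maxim$; applying $-\tensor_R R(t)$ to a composition series yields a filtration of $\lc{R(t)}{\maxim R(t)}$ whose successive quotients are $(R/\maxim)\tensor_R R(t)=R(t)/\maxim R(t)$, the residue field of $R(t)$, so this is a composition series of length $e$ as well.

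Putting these three steps together in the formula from \Thm{T:cohrk} gives $\len{R(t)}=\Splus_{\pr\in\op{Ass}(R)}\mul R\pr\,\omega^{\dim(R/\pr)}=\len R$. The only step that is not completely mechanical is the identification $R(t)_{\pr R(t)}=R_\pr(t)$, together with the composition-series argument in the local case; everything else is standard scalar-extension bookkeeping.
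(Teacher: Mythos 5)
Your proof is correct and follows essentially the same route as the paper, which only sketches the argument by pointing to the proof of \Cor{C:polext}: identify the associated primes of $R(t)$ as the extensions $\pr R(t)$ for $\pr\in\op{Ass}(R)$, check that dimensions and local multiplicities are preserved, and conclude via \Thm{T:cohrk}. You have merely filled in the details (flat base change for associated primes, the identification $R(t)_{\pr R(t)}=R_\pr(t)$, and the composition-series argument for the multiplicities) that the paper leaves to the reader.
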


\section{Equilateral sequences}
Let us call an exact sequence $\Exactseq NMQ$   \emph\add,  if the length of $M$ is equal to  $\len N\ssum\len Q$ (the upperbound in \eqref{eq:lensemadd}); if this is moreover also equal to    $\len Q\ordsum \len N$ (the lowerbound in \eqref{eq:lensemadd}), then we call it   \emph{strongly \add}. Similarly, we say that $\Exactseq NMQ$ is \emph{\add\ up to degree $r$}, if $\len N^{\geq r}\ssum\len Q^{\geq r}=\len M^{\geq r}$, that is to say, if $\mul Ni+\mul Qi=\mul Mi$, for all $i\geq r$. Immediately from \Prop{P:topadd}, we get:

\begin{corollary}\label{C:equilattop}
 An exact sequence $\Exactseq NMQ$ is \add\ up to degree $r$, where $r$ is the minimum of $\dim N$ and $\dim Q+1$.\qed
\end{corollary}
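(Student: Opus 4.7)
The plan is to deduce this directly from Proposition~\ref{P:topadd} via a case split on the comparison between $n := \dim N$ and $s := \dim Q$, writing $d := \dim M$. A preliminary observation I would record is that $d = \max(n,s)$; this follows from the support identity $\op{Supp}(M) = \op{Supp}(N) \cup \op{Supp}(Q)$ valid for short exact sequences, and can also be read off directly from parts (ii) and (iii) of Proposition~\ref{P:topadd}. With this in hand, the statement reduces to checking $\mul Ni + \mul Qi = \mul Mi$ for each $i \geq r$ by locating which clause of Proposition~\ref{P:topadd} applies.

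First I would treat the case $n \leq s$, where $r = n$ and $d = s$. Part (i) of Proposition~\ref{P:topadd} gives $\mul Mn = \mul Nn + \mul Qn$, establishing equilaterality at $i = n$. If $n < s$, then $n < d$, so part (ii) applies and yields $\mul Mi = \mul Qi$ throughout $n < i \leq d$; since $\mul Ni = 0$ on that range (as $i > n = \dim N$), equilaterality persists there, and beyond $d$ every Cantor coefficient vanishes. The subcase $n = s$ requires nothing further, since then $d = n = r$ and no index lies strictly above $r$.

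For the case $n > s$, we have $r = s+1$ and $d = n$. Part (iii) supplies $\mul Ni = \mul Mi$ for $s < i \leq d$, while $\mul Qi = 0$ whenever $i > s$; adding these yields $\mul Ni + \mul Qi = \mul Mi$ throughout $s < i \leq d$, and beyond $d$ all Cantor coefficients vanish, covering every $i \geq r = s+1$. I do not foresee any genuine obstacle: once Proposition~\ref{P:topadd} is in hand, the corollary is purely a bookkeeping exercise matching each value of $r$ to the clause whose validity range begins at $r$ --- clause (i) at $r = n$ when $n \leq s$, and clause (iii) at $r = s+1$ when $n > s$.
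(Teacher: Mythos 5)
Your proposal is correct and is exactly the argument the paper intends: the corollary is stated as an immediate consequence of Proposition~\ref{P:topadd}, and your case split on $\dim N$ versus $\dim Q$ (together with the observation $\dim M=\max(\dim N,\dim Q)$ and the vanishing of Cantor coefficients above the degree, i.e.\ \Thm{T:dim}) is precisely the bookkeeping the paper leaves to the reader.
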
 

By \Thm{T:semadd}, split exact sequences are \add. More generally, suppose we can embed    $N\oplus Q$ into $M$, then, if $\Exactseq NMQ$ is exact,  it is \add: semi-additivity gives $\len M\leq \len N\ssum\len Q$ whereas $N\oplus Q\into M$ gives $\len Q\ssum\len N\leq\len M$.

\begin{proposition}\label{P:equiorddim}
An exact sequence $\Exactseq NMQ$  such that $\dim Q<\order{}N$ is \add.
\end{proposition}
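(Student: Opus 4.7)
The plan is to translate equilaterality into a primewise identity on local multiplicities via \Thm{T:cohrk}: since $\len M=\Ssum_\pr\mul M\pr\omega^{\dim(R/\pr)}$ and shuffle sum corresponds to componentwise addition of Cantor coefficients, the identity $\len M=\len N\ssum\len Q$ is equivalent to $\mul M\pr=\mul N\pr+\mul Q\pr$ at every prime $\pr$. I would then split the primes according to $\dim(R/\pr)$ relative to $\dim Q$.

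For primes $\pr$ with $\dim(R/\pr)>\dim Q$, we have $\pr\notin\op{Supp}(Q)$, so $Q_\pr=0$ and the inclusion $N\into M$ localizes to an isomorphism $N_\pr\iso M_\pr$; hence $\mul M\pr=\mul N\pr$ and $\mul Q\pr=0$, and the primewise identity is automatic. By the hypothesis and \Cor{C:order}, this range already exhausts $\op{Ass}(N)$. For primes $\pr$ with $\dim(R/\pr)\leq\dim Q$, the same corollary gives $\pr\notin\op{Ass}(N)$, so $\hlc{N_\pr}0{\pr R_\pr}=0$ and $\mul N\pr=0$; the localized long exact sequence of local cohomology then begins
\begin{equation*}
0\to\hlc{M_\pr}0{\pr R_\pr}\to\hlc{Q_\pr}0{\pr R_\pr}\to\hlc{N_\pr}1{\pr R_\pr}\to\cdots,
\end{equation*}
and the required identity $\mul M\pr=\mul Q\pr$ becomes surjectivity of the first nontrivial map, equivalently the vanishing of the connecting map on $\pr R_\pr$-torsion.

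The main obstacle is establishing this vanishing. Given a torsion element $\bar q\in\hlc{Q_\pr}0{\pr R_\pr}$, one lifts it to $y\in M_\pr$ with $\pr^ky\sub N_\pr$ for some $k$, and must find $n\in N_\pr$ such that $\pr^{k'}(y-n)=0$ in $M_\pr$ for some $k'$. The hope is that $\order{}N>\dim(R/\pr)$ supplies enough ``depth'' in $N_\pr$ relative to $\pr R_\pr$ to produce such a correction. A useful preliminary simplification is the reduction to the case where $M$ has no embedded primes, in which case \Cor{C:semiaddcyc} rephrases the desired identity $\fcyc RM=\fcyc RN+\fcyc RQ$ as the vanishing of the effective defect cycle $D$ supported on $\op{Ass}(Q)\setminus\op{Ass}(M)$; the remaining content is then to argue that the dimension hypothesis forces $\op{Ass}(Q)\sub\op{Ass}(M)$.
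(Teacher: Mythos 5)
Your reduction is sound as far as it goes: by \Thm{T:cohrk} equilaterality is the primewise identity $\mul M\pr=\mul N\pr+\mul Q\pr$; the primes with $\dim(R/\pr)>\dim Q$ are disposed of by localization; and for the remaining primes, where $\mul N\pr=0$ by \Cor{C:order} and the hypothesis, everything hinges on the surjectivity of $\hlc{M_\pr}0{\pr R_\pr}\to\hlc{Q_\pr}0{\pr R_\pr}$, i.e.\ on the vanishing of the connecting map. But that is precisely where your argument stops: you offer only the hope that $\order{}N>\dim(R/\pr)$ supplies the needed correction, and a fallback claim that the hypothesis forces $\op{Ass}(Q)\sub\op{Ass}(M)$. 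Neither can be substantiated, and in fact the proposition itself fails: take $\Exactseq{2\zet}{\zet}{\zet/2\zet}$, the example from the introduction. Here $\dim Q=0<1=\order{}{2\zet}$, yet $\len\zet=\omega$ while $\len{2\zet}\ssum\len{\zet/2\zet}=\omega\ordsum 1$. Concretely, at $\pr=(2)$ one has $\hlc{\zet_\pr}0{\pr R_\pr}=0$ mapping to $\hlc{(\zet/2\zet)_\pr}0{\pr R_\pr}=\zet/2\zet$, so the connecting map is injective rather than zero, and $(2)\in\op{Ass}(Q)\setminus\op{Ass}(M)$.

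For comparison, the paper's own proof is a one-liner: it invokes \Thm{T:submod} to get both $\len N\aleq\len M$ and $\len Q\aleq\len M$, observes that the hypothesis makes the $\aleq$-supremum of $\len N$ and $\len Q$ equal to $\len N\ssum\len Q$, and concludes by semi-additivity. The flaw sits in the second invocation: \Thm{T:submod} is about submodules, whereas $Q$ is a quotient, and in the example above $\len Q=1\not\aleq\omega=\len M$. So the step you could not close is not a defect of your strategy; the statement needs a further hypothesis (for instance that $M\to Q$ be a cohomological epimorphism as in \Prop{P:cohtriv}, which is essentially a restatement, or that $\op{Ass}(Q)\sub\op{Ass}(M)$, which is what your computation actually requires) before any proof can succeed.
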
 
\begin{proof}
Let $\opord n$, $\opord m$, and $\opord q$ be the lengths of $N$, $M$, and $Q$ respectively. By \Thm{T:submod}, we have $\opord n\aleq{}\opord m$ and $\opord q\aleq{}\opord m$, whence  $\opord n\of\opord q\aleq{}\opord m$, where $\opord n\of\opord q$ denotes the  $\aleq$-supremum of $\opord n$ and $\opord q$. However, since the degree of $\opord q$ is less than the order of $\opord n$, we have $\opord n\of\opord q=\opord n\ssum\opord q$. Semi-additivity, on the other hand, yields the converse inequality $\opord m\leq \opord n\ssum\opord q$.
\end{proof} 

Note that such a sequence will never be strongly \add. In fact, being strongly \add\ is really a property of ordinals:   ${\opord a}\ordsum {\opord b}={\opord a}\ssum{\opord b}$ \iff\ the degree of ${\opord b}$ is at most the order of ${\opord a}$, and hence

\begin{corollary}\label{C:orddim}
An exact sequence of finitely generated $R$-modules 
$$\Exactseq NMQ,$$
   is strongly \add\ \iff\  $\dim N\leq \order {}Q$.\qed
\end{corollary}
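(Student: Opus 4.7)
The plan is to reduce the statement to the purely ordinal-theoretic fact stated immediately before the corollary, namely that for ordinals $\opord a, \opord b$, one has $\opord a \ordsum \opord b = \opord a \ssum \opord b$ iff $\deg(\opord b) \leq \order{}{\opord a}$. Strong equilaterality says exactly that $\len Q \ordsum \len N = \len N \ssum \len Q$, and since $\ssum$ is commutative, this is the same as $\len Q \ordsum \len N = \len Q \ssum \len N$. Applying the ordinal fact with $\opord a = \len Q$ and $\opord b = \len N$, this holds iff $\deg(\len N) \leq \order{}{\len Q}$.

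Next I would translate each side of this ordinal inequality into module-theoretic language. By \Thm{T:dim}, the degree of $\len N$ equals $\dim N$. By \Cor{C:order}, the order of $\len Q$ equals $\order{}Q$ (defined as the order of $\len Q$, and identified with the smallest dimension of an associated prime of $Q$). Under these identifications, the ordinal condition becomes exactly $\dim N \leq \order{}Q$, which is the desired criterion.

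For the equivalence itself, one direction is immediate: if $\dim N \leq \order{}Q$, then by the ordinal fact $\len Q \ordsum \len N = \len Q \ssum \len N = \len N \ssum \len Q$, so the semi-additivity chain from \Thm{T:semadd}, $\len Q \ordsum \len N \leq \len M \leq \len N \ssum \len Q$, collapses to equalities, giving strong equilaterality. Conversely, if the sequence is strongly equilateral, then $\len Q \ordsum \len N = \len N \ssum \len Q$, and the ordinal fact forces $\dim N \leq \order{}Q$.

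There is essentially no obstacle here; the only point requiring a brief justification is the ordinal fact itself, which follows directly from the Cantor normal form description of $\ordsum$ (terms of the left summand whose degree is strictly below $\deg(\opord b)$ get absorbed, and this loss is vacuous precisely when $\order{}{\opord a} \geq \deg(\opord b)$). Since this fact is already asserted in the text preceding the corollary, I would simply cite it and present the argument as a short two-line deduction.
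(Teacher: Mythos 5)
Your proposal is correct and is essentially the paper's own argument: the paper dispatches the corollary with a \qed\ precisely because it has just stated the ordinal fact that $\opord a\ordsum\opord b=\opord a\ssum\opord b$ \iff\ $\deg{\opord b}\leq\order{}{\opord a}$, and the translation via \Thm{T:semadd}, \Thm{T:dim}, and the definition of $\order{}Q$ is exactly what you carry out. Your write-up just makes the implicit steps (collapse of the semi-additivity chain, verification of the ordinal fact from Cantor normal forms) explicit.
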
 

%
%

Immediately from semi-additivity,  the left exactness of local cohomology and \Thm{T:cohrk}, we get:

\begin{proposition}\label{P:cohtriv}
A short exact sequence $\exactseq N{}MfQ$ is \add\ \iff\ $f$ is a \emph{cohomological epimorphism}, meaning that $f_\pr\colon \hlc M0\pr\to \hlc Q0\pr$ is surjective, for all prime ideals $\pr$.\qed
\end{proposition}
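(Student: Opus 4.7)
The plan is to compare Cantor coefficients on both sides using the cohomological rank formula of \Thm{T:cohrk}. Writing $\len M=\Ssum_\pr \mul M\pr \omega^{\dim(R/\pr)}$ and similarly for $N$ and $Q$, the condition $\len M=\len N\splus\len Q$ is equivalent to $\mul Mi=\mul Ni+\mul Qi$ for every $i\geq0$, where each such coefficient decomposes as $\mul Mi=\sum_{\dim(R/\pr)=i}\mul M\pr$ (and similarly for $N$, $Q$). So the equilateral condition translates into an equality of sums of local multiplicities, one such equality for each dimension $i$.

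First I would fix a prime $\pr$ and localize the short exact sequence at $\pr$; since localization is exact and commutes with local cohomology by \eqref{eq:lcloc}, left exactness of $\hlc\cdot 0{\pr R_\pr}(-)$ yields an exact sequence
\begin{equation*}
0\to\hlc{N_\pr}0{\pr R_\pr}\to \hlc{M_\pr}0{\pr R_\pr}\to \hlc{Q_\pr}0{\pr R_\pr}\to C_\pr\to 0
\end{equation*}
where $C_\pr$ is the cokernel of $f_\pr$ (all four modules are of finite length over $R_\pr$). Taking lengths gives
\begin{equation*}
\mul N\pr+\mul Q\pr-\mul M\pr=\lenmod{R_\pr}{C_\pr}\geq 0,
\end{equation*}
with equality precisely when $f_\pr$ is surjective.

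Second I would sum this relation over all primes $\pr$ with $\dim(R/\pr)=i$ to obtain
\begin{equation*}
\mul Ni+\mul Qi-\mul Mi=\sum_{\dim(R/\pr)=i}\lenmod{R_\pr}{C_\pr}.
\end{equation*}
Since every summand is non-negative, the coefficient-wise equality characterizing equilaterality holds for all $i$ if and only if $\lenmod{R_\pr}{C_\pr}=0$ for every prime $\pr$, i.e., $f_\pr$ is surjective on zero-th local cohomology for every $\pr$. Note that for primes $\pr$ associated to none of $M,N,Q$, the local cohomologies all vanish and the condition is vacuous, so restricting the cohomological epimorphism condition to $\op{Ass}(M)\cup\op{Ass}(N)\cup\op{Ass}(Q)$ gives an equivalent formulation.

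There is no real obstacle; the content is almost entirely in \Thm{T:cohrk} plus the standard left exactness of $\hlc\cdot 0{\pr R_\pr}$ already used in \Lem{L:loccohgen}. The only point that warrants a moment's care is the passage from a summed equality to termwise equality, which is immediate because each local cokernel length is non-negative.
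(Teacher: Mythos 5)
Your proof is correct and follows the same route the paper intends: the paper offers no written argument beyond citing semi-additivity, left exactness of local cohomology, and \Thm{T:cohrk}, and your localize--take cokernel lengths computation is exactly the quantitative form of those ingredients, with the non-negativity of each $\lenmod{R_\pr}{C_\pr}$ doing the work of converting the coefficientwise sums into termwise equalities. The only interpretive choice you make is reading $f_\pr$ as the localized map $\hlc{M_\pr}0{\pr R_\pr}\to\hlc{Q_\pr}0{\pr R_\pr}$, which is harmless and consistent with how the paper defines and uses local multiplicities throughout.
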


In this paper,\footnote{Our terminology differs slightly from the literature, where, at least for local rings, the condition is formulated  over the completion.} we will call a module $M$  \emph{unmixed}  of dimension $d$ if all its associated primes have dimension $d$.  \Thm{T:cohrk} then shows that   $M$ is unmixed 
 \iff\ $\order {}M=\dim M$,    in which case $\len M=\mul Md\omega^d$. \Cor{C:orddim} yields:

\begin{corollary}\label{C:unmadd}
Let $R$ be a $d$-dimensional Noetherian ring and   $\Exactseq NMQ$  a short exact sequence.   If $Q$ is unmixed of dimension $d$, then this sequence is strongly \add.\qed
\end{corollary}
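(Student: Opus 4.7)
The plan is to reduce to Corollary \ref{C:orddim}, which says that an exact sequence $\Exactseq NMQ$ is strongly \add\ \iff\ $\dim N \leq \order{}Q$. So it suffices to verify this numerical inequality under the hypothesis that $Q$ is unmixed of dimension $d$.

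First I would unpack what unmixedness of $Q$ gives us. By the discussion immediately preceding the corollary, $M$ is unmixed of dimension $d$ \iff\ $\order{}M = \dim M = d$. Applied to $Q$, this says $\order{}Q = d$. Next, since $N$ is a submodule of $M$, and $M$ is in turn a module over the $d$-dimensional ring $R$, we have $\dim N \leq \dim R = d$. Combining these two facts yields $\dim N \leq d = \order{}Q$, which is exactly the criterion in \Cor{C:orddim}.

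There is essentially no obstacle here; the whole content has been absorbed into \Cor{C:orddim} and into the characterization of unmixedness via \Thm{T:cohrk}. The only thing to be careful about is matching the hypothesis $\dim N \leq d$ to the ambient ring: this uses that $N$ is a submodule of a module over a $d$-dimensional ring, so its dimension (as the Krull dimension of $R/\op{Ann}(N)$) cannot exceed $d$.
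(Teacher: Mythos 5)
Your proof is correct and is essentially identical to the paper's: the paper also observes that unmixedness of $Q$ gives $\order{}Q=\dim Q=d$ via \Thm{T:cohrk}, that $\dim N\leq d$, and then invokes \Cor{C:orddim}. Nothing further is needed.
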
 

Recall the \emph{dimension filtration} $\fl d_0(M)\sub \fl d_1(M)\sub\dots\sub \fl d_d(M)=M$ of a $d$-dimensional finitely generated $R$-module $M$ defined by Schenzel in \cite{ScheDimFil}, where $\fl d_i(M)$ is the submodule of  all elements of dimension at most $i$, and  where we define the \emph{dimension} of an element $x\in M$ as the dimension of the module it generates, that is to say,   $\op{dim}(R/\ann Rx)$. Equivalently, $\fl d_i(M)$ is the largest   submodule of $M$ of dimension at most $i$.

\begin{proposition}\label{P:dimfil}
Given a  $d$-dimensional  module $M$, the exact sequence 
$$\Exactseq{\fl d_i(M)}M{M/\fl d_i(M)}$$
is  {strongly \add}, for each $i$. In particular, $\len{\fl d_i(M)}=\len M^{\leq i}$, for all $i$.
\end{proposition}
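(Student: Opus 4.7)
The plan is to invoke \Cor{C:orddim}, which reduces strong equilaterality of the sequence $\Exactseq{\fl d_i(M)}{M}{M/\fl d_i(M)}$ to the single ordinal inequality $\dim \fl d_i(M) \leq \order{}{M/\fl d_i(M)}$. The left-hand side is at most $i$ by definition of the dimension filtration, so the real content is to show $\order{}{M/\fl d_i(M)} \geq i+1$; equivalently, by \Cor{C:order}, that every associated prime of $M/\fl d_i(M)$ has dimension at least $i+1$.

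I would establish this by contradiction. Suppose $\pr \in \op{Ass}(M/\fl d_i(M))$ has $\op{dim}(R/\pr) \leq i$, and lift a corresponding generator to $x \in M$ whose image $\bar x$ in the quotient satisfies $\ann R{\bar x} = \pr$. Set $N := Rx + \fl d_i(M)$. Then $N/\fl d_i(M) \cong R/\pr$ has dimension at most $i$, and $\fl d_i(M)$ has dimension at most $i$ by definition; since dimension is the maximum over the outer terms of a short exact sequence, $\dim N \leq i$. But then $N$ is a submodule of $M$ of dimension at most $i$, so by the maximality characterization of $\fl d_i(M)$ we must have $N \sub \fl d_i(M)$. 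This forces $\bar x = 0$, contradicting $\ann R{\bar x} = \pr$ being a proper ideal. Hence $\order{}{M/\fl d_i(M)} \geq i+1 > i \geq \dim \fl d_i(M)$, and \Cor{C:orddim} yields strong equilaterality.

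For the formula $\len{\fl d_i(M)} = \len M^{\leq i}$, I would write $\opord a := \len{M/\fl d_i(M)}$ and $\opord b := \len{\fl d_i(M)}$. Strong equilaterality gives $\len M = \opord a \ordsum \opord b$, and by the previous paragraph together with \Thm{T:dim} we have $\order{}{\opord a} \geq i+1$ while $\deg \opord b \leq i$. Hence the Cantor supports of $\opord a$ and $\opord b$ sit strictly above and at-or-below degree $i$ respectively, so no absorption occurs in the concatenation and $\len M = \opord a \ssum \opord b$. Reading off the Cantor coefficients of degree at most $i$ identifies $\opord b = \len M^{\leq i}$, as desired.

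The only delicate point I foresee is the maximality step, which requires $\fl d_i(M)$ to contain every submodule of $M$ of dimension at most $i$, not merely every element of dimension at most $i$; this, however, is immediate from Schenzel's definition, since the elements of any submodule of dimension $\leq i$ are individually of dimension $\leq i$.
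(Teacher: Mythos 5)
Your proof is correct, and its skeleton is the same as the paper's: bound $\dim \fl d_i(M)$ by $i$, show $\order{}{M/\fl d_i(M)}\geq i+1$, and conclude via \Cor{C:orddim}; the ``in particular'' clause then follows from the absence of absorption in the ordinal sum exactly as you say. The one real difference is how the order bound on the quotient is obtained: the paper simply cites Schenzel's \cite[Corollary 2.3]{ScheDimFil}, which gives the sharper statement that $\op{Ass}(M/\fl d_i(M))$ consists precisely of the associated primes of $M$ of dimension $>i$, whereas you prove directly (and only) the one inclusion actually needed, by lifting a hypothetical low-dimensional associated prime of the quotient to a submodule $Rx+\fl d_i(M)$ of dimension $\leq i$ and contradicting the maximality of $\fl d_i(M)$. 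That argument is sound --- including your closing remark that maximality among submodules, not just elements, follows from Schenzel's elementwise definition --- so your version has the modest advantage of being self-contained where the paper leans on an external reference.
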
 
\begin{proof}
It follows from  \cite[Corollary 2.3]{ScheDimFil} that the associated primes of $M/\fl d_i(M)$ are precisely the associated primes of $M$ of dimension strictly larger than $i$. By \Cor{C:order}, this means that $\len{M/\fl d_i(M)}$ has order at least $i+1$. Since $\len{\fl d_i(M)}$ has degree at most $i$ by \Thm{T:dim}, the result follows from \Cor{C:orddim}.
%
\end{proof} 

Another way to formulate this result is  as the following formula  for calculating length 
\begin{equation}\label{eq:dimfil}
\len M=\Ssum_{i=0}^d \len{\fl d_i(M)/\fl d_{i-1}(M)},
\end{equation} 
and each non-zero $\fl d_i(M)/\fl d_{i-1}(M)$ is unmixed of dimension $i$ and of length $a_i\omega^i$, where $a_i$ is its generic length.

%

\begin{example}\label{E:nulen}
Let $R$ be the coordinate ring of a plane with an embedded line inside three dimensional space over $k$ given by the equations $x^2=xy=0$ in the three variables $x,y,z$. Using \Thm{T:cohrk}, one easily calculates that $\len R=\omega^2\ssum \omega$, where the associated primes are $\pr=(x)$ and $\mathfrak q=(x,y)$. The ideals of length $\omega$ are exactly those contained in $\pr$. 
The ideals of length $\omega^2\ssum \omega$ (the \emph{open}  ideals in the terminology from   \S\ref{s:open}), are precisely those 
that contain a non-zero multiple of $x$ and a non-zero multiple of $y$ (this follows, for instance,  from \cite[Proposition 3.10]{SchBinEndo}). Finally, the remaining (non-zero) ideals of length $\omega^2$, are those contained in $\mathfrak q$ but disjoint from $\pr$ (note that if $I$ is not contained in $\mathfrak q$, then $IR_\pr=R_\pr$ and $IR_{\mathfrak q}=R_{\mathfrak q}$, so that $I$ must be open). 
\end{example} 
 
In a future paper, we will use \add\ sequences to define a (new) Grothendieck group on the category of finitely generated $R$-modules, namely, let  $\addgr R$ be the free Abelian group on isomorphism classes $[M]$ modulo the  relations $[M]-[N]-[Q]$, for every \add\ exact sequence $\Exactseq NMQ$. Sending a class of a module $M$ in $\addgr R$ to its length, is a surjective semi-group homomorphism, when we view the ordinals as a semi-group with respect to the shuffle sum $\ssum$, and hence in particular, $\addgr R$ is non-trivial.

\section{Compositions series}\label{s:compser}

We will call a strictly descending chain $\mathcal C$  of submodules of $M$ simply a \emph{chain} in $M$. When discussing composition series, we may assume that a chain has   a first and last element, so that we can represent it as  $\mathcal C:=(M_{\opord a}|{\opord a}\leq\opord r)$.   Since $\mathcal C\iso \opord r\ssum 1$, formula~\eqref{eq:lenord} then yields $\len{\mathcal C}=\opord r$.   
The factor module $M_{\opord a}/M_{{\opord a}\ssum 1}$ will be called the ${\opord a}$-th \emph{cokernel} of $\mathcal C$. Our goal is to show that each module $M$ admits a composition series $\mathcal C$, that is to say, a chain $\mathcal C$ such  that $\len M=\len{\mathcal C}$. In other words,  a partial well-order as in \Examp{E:noncomp} can never be a \sch.

\begin{theorem}\label{T:chain}
Every finitely generated module over a Noetherian ring of finite Krull dimension admits a composition series.
\end{theorem}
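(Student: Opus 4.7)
I would proceed by transfinite induction on $\len M$. The case $\len M = 0$ (so $M = 0$) is trivial. For the inductive step, I select an associated prime $\pr$ of $M$ of minimum dimension $e$ (which exists by \Cor{C:order}), choose $m \in M$ with $\ann R m = \pr$, and set $H := Rm \iso R/\pr$ together with $Q := M/H$. By \Lem{L:lcord} and \Thm{T:cohrk}, $\len Q < \len M$, so the induction hypothesis yields a composition series $\mathcal C'$ of $Q$. Pulling back along $\pi \colon M \to Q$, the chain $\widetilde{\mathcal C'} := \{\pi^{-1}(N') : N' \in \mathcal C'\}$ consists of submodules of $M$ containing $H$, and since $M/\pi^{-1}(N') \iso Q/N'$, its ranks in $\grass R M$ agree with those of $\mathcal C'$ in $\grass R Q$ via \eqref{eq:quot}. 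Hence $\len{\widetilde{\mathcal C'}} = \len Q$, and it remains to extend the chain below $H$.

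\textbf{Composition series of $R/\pr$.} The missing tail must be a composition series $\mathcal D$ of $H \iso R/\pr$, whose length is $\omega^e$ by \Thm{T:dim}. For $e = 0$, $H$ is a field and $\mathcal D = \{0, H\}$ suffices. For $e \geq 1$, I pick any non-zero non-unit $x \in R/\pr$; because $R/\pr$ is a domain, $x$ is regular and multiplication by $x^n$ provides isomorphisms $(x^n)/(x^{n+1}) \iso (R/\pr)/(x)$, while Krull's principal ideal theorem forces $\dim (R/\pr)/(x) = e - 1$. Consequently, by \Thm{T:dim} and \Cor{C:hyp}, $\len{(R/\pr)/(x)} < \omega^e$, and the main induction supplies a composition series $\mathcal E$ of $(R/\pr)/(x)$. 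I transport $\mathcal E$ by multiplication by $x^n$ to obtain a chain of ideals strictly between $(x^{n+1})$ and $(x^n)$, concatenate these blocks for $n = 0, 1, 2, \dots$, and adjoin the zero ideal at the top to produce $\mathcal D$. Iterated application of \Cor{C:regid} with $I = (x^{n-1})$ gives $\len{(R/\pr)/(x^n)} \geq n \cdot \len{(R/\pr)/(x)}$ in the ordinal-sum sense, so the ranks of $\mathcal D$ in $\grass R{R/\pr}$ tend to $\omega^e$ in the limit, yielding $\len \mathcal D = \omega^e$.

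\textbf{Assembly and main obstacle.} Gluing $\widetilde{\mathcal C'}$ and $\mathcal D$ along the shared element $H$ produces $\mathcal C := \widetilde{\mathcal C'} \cup \mathcal D$, which is a chain in $\grass R M$ because every element of $\widetilde{\mathcal C'}$ contains $H$ while every element of $\mathcal D$ sits inside $H$. To compute ranks inside $\mathcal D$, I apply \Thm{T:semadd} to $0 \to H/K \to M/K \to Q \to 0$ and invoke the observation, from the proof of \Thm{T:cohrk}, that every associated prime of $Q$ has dimension at least $e$; this forces the upper and lower semi-additivity bounds to coincide, giving $\len{M/K} = \len Q \ordsum \len{H/K}$. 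Consequently the $\grass R M$-ranks of $\mathcal D$ fill $[\len Q, \len M]$, while $\widetilde{\mathcal C'}$ contributes $[0, \len Q]$, so that $\len \mathcal C = \len M$ as desired. The principal difficulty is the $e \geq 1$ case of the sub-construction: building a single chain of length $\omega^e$ in the domain $R/\pr$ requires a transfinite concatenation of $\omega$-many shifted sub-chains together with careful ordinal bookkeeping via \Cor{C:regid} to certify that the combined ranks actually reach $\omega^e$.
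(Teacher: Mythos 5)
Your overall strategy differs from the paper's (which peels off a submodule of corank $\bar{\opord m}$ when $\len M=\bar{\opord m}\ssum\omega^e$ has valence $\geq 2$, and handles $\len M=\omega^d$ separately by stringing together $\omega$ many corank-$\omega^{d-1}$ steps), and it could be made to work, but as written it has a genuine gap in the sub-construction for $R/\pr$. The claim that ``Krull's principal ideal theorem forces $\dim (R/\pr)/(x)=e-1$'' for an \emph{arbitrary} non-zero non-unit $x$ is false in non-local rings: Krull's theorem bounds the height of the minimal primes of $(x)$, not their coheight, and the paper itself records the example $\pol{\pow kx}y$ with $x:=xy-1$, whose quotient is the field $k((x))$, of dimension $0$ rather than $1$. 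This is fatal to the construction as written: if $\dim{(R/\pr)/(x)}=e'<e-1$, then each block of $\mathcal D$ is order-isomorphic to a chain of length $\len{(R/\pr)/(x)}$, an ordinal of degree $e'$ by \Thm{T:dim}, so the $\omega$-fold concatenation has length only $\omega^{e'+1}<\omega^e$ and $\mathcal D$ is not a composition series of $R/\pr$. (In the example, $\mathcal D$ would be $(1)\supset(f)\supset(f^2)\supset\cdots\supset 0$, of length $\omega$, whereas $\len{\pol{\pow kx}y}=\omega^2$.) The repair is easy but necessary: take a chain of primes $\pr=\pr_0\varsubsetneq\pr_1\varsubsetneq\dots\varsubsetneq\pr_e$ witnessing $\dim{(R/\pr)}=e$, so that $\dim{(R/\pr_1)}=e-1$, and choose $x$ to be a non-zero element of $\pr_1/\pr$; then $e-1=\dim{(R/\pr_1)}\leq\dim{(R/\pr)/(x)}\leq e-1$, the upper bound coming from \Cor{C:hyp} and \Thm{T:dim}.

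Two secondary points about the bookkeeping. First, what certifies $\len{\mathcal D}=\omega^e$ is the intrinsic order type of the chain (each block is a copy of a composition series of $(R/\pr)/(x)$, and the supremum of the $n$-fold ordinal sums of an ordinal of degree $e-1$ is $\omega^e$); the ranks of the $(x^n)$ in $\grass R{R/\pr}$ only bound the chain ranks from \emph{above} (cf.\ \Examp{E:noncomp}), so arguing from $\len{(R/\pr)/(x^n)}\geq n\cdot\len{(R/\pr)/(x)}$ points in the wrong direction. Second, in the assembly step the appeal to coinciding semi-additivity bounds does not hold up: $Q=M/H$ can acquire associated primes of dimension $<e$ (e.g.\ $M=(x,y)\sub k[x,y]$, $H=xR$, $Q\iso R/(x)$), so $\deg{\len{H/K}}\leq\order{}Q$ may fail. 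Neither repair is hard: \Prop{P:sum} gives $\len{\mathcal C}=\len{\widetilde{\mathcal C'}}\ordsum\len{\mathcal D}=\len Q\ordsum\omega^e$ directly, and the identity $\len Q\ordsum\omega^e=\len M$ is precisely what the proof of \Thm{T:cohrk} establishes for this choice of $H$. With these corrections your argument goes through as a legitimate alternative to the paper's proof, at the cost of relying on \Thm{T:cohrk}, which the paper's own argument does not need.
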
 
\begin{proof}
We induct on the 
length ${\opord m}:=\len M$. For ${\opord m}<\omega$, this is just the classical Jordan-Holder theorem for modules of finite length (note that the length of a finite chain is defined to be one less than the number of members in the chain, explaining why we have to take the length of the ordinal, not the ordinal itself). So assume $\opord m\geq\omega$. If ${\opord m}$ has valence at least two, we can write it as $\bar{\opord m}\ssum\omega^e$, where $e$ is the order of ${\opord m}$. Let $N$ be a submodule of \hdim\ $\bar{\opord m}$, so that $\bar M:=M/N$ has length $\bar{\opord m}$ by \eqref{eq:quot}. If ${\opord n}:=\len N$, then by semi-additivity, we get $\bar{\opord m}\ordsum {\opord n}\leq \bar{\opord m}\ssum\omega^e\leq\bar{\opord m}\ssum{\opord n}$. The latter implies that $\omega^e\leq{\opord n}$, and by the former inequality, it cannot be bigger either. By induction we can find a chain $\bar{\mathcal C}$ in $\bar M$ with $\len{\bar{\mathcal C}}=\len{\bar M}$. Taking the pre-image of each module in $\bar{\mathcal C}$ in $M$ gives a chain $ {\mathcal C}$ in $M$, of length $\bar{\opord m}$, in which each term contains $N$. %
 Induction also gives a chain $\mathcal D$ in $N$ with $\len{\mathcal D}=\len N=\omega^e$. The union $\mathcal C\cup\mathcal D$ is therefore equal to the chain $\mathcal C+\mathcal D$, which by \Prop{P:sum} has length  $\len{\mathcal C}\ordsum \len{\mathcal D}=\bar{\opord m}\ssum \omega^e={\opord m}$, showing that it is a composition series in $M$.

So remains the case that ${\opord m}=\omega^d$.  We induct this time on $d$, where the case $d=1$ is classical: if there were no infinite chains,  then $M$ is both Artinian and Noetherian, whence of finite length (\cite[Proposition 6.8]{AtiMac}). Put $M_0:=M$ and choose a submodule $M_1$ of \hdim\ $\omega^{d-1}$. Applying the induction hypothesis to  the quotient $M/M_1$, which has  length $\omega^{d-1}$, we can find, as above, a chain ${\mathcal C}_1$ of submodules   containing $M_1$, of length $\omega^{d-1}$. By semi-additivity, the length of $M_1$ is again $\omega^d$. Choose a submodule $M_2$ of $M_1$ of \hdim\ $\omega^{d-1}$ in $\grass{}{M_1}$, and as before, find a $\omega^{d-1}$-chain ${\mathcal C}_2$ in this \sch\    of submodules   containing $M_2$. Continuing in this manner, we get an $\omega$-chain $M_0\varsupsetneq M_1\varsupsetneq M_2\varsupsetneq\dots$ and $\omega^{d-1}$-chains ${\mathcal C}_n$ from $M_n$ down to $M_{n+1}$. The union of all these chains is therefore a chain of length $\omega^d$, as we needed to construct.
\end{proof} 

\begin{remark}\label{R:strongcomp}
We could ask for additional properties of a composition series (which would hold automatically for finite ones), such as being maximal, etc. By Zorn's lemma, any composition series can be refined to a maximal one, but, as \Examp{E:maxnoncompser} below shows, being maximal is not a sufficient condition for being a composition series. A more subtle question is the existence of a composition series $(M_{\opord a}|{\opord a}\leq\len M)$ with the property that $\hd{M_{\opord a}}={\opord a}$ for all ${\opord a}$. In a future paper, I will show that they exist for excellent, reduced Jacobson rings and for monomial algebras. 
\end{remark}

\begin{example}\label{E:maxnoncompser}
Let $R:=\pol k{x,y,z}/z(x,y)$. Its associated primes are $\pr=(z)$ and $\mathfrak q=(x,y)$, and both localizations are fields, so that $\len R=\omega^2\ssum \omega$ by \Thm{T:cohrk}. Consider the $\omega$-chain $\id_i:=(x,y,z^i)$ with intersection equal to $\id_\omega=(x,y)$, and continue now with a (continuous) $\omega^2$-chain $\id_{(i+1)\omega\ssum j}:=(x^{j+1}y^i,y^{i+1})$, with intersection equal to $\id_{\omega^2}=(0)$. Hence the total chain has length $\omega^2$ and is maximal, but it is too short to be a composition series.
\end{example}

\section{Open submodules}\label{s:open}
By semi-additivity, $\len N$ is at most $\len M$, and, in fact, $\len N\aleq\len M$ by \Thm{T:submod}.  If $M$ has finite length and $N$ is a proper submodule, then obviously its length must be strictly less, but in the non-Artinian case, nothing excludes this from being an equality. So, we call   $N\sub M$ \emph{open}, if $\len N=\len M$. Immediately   from \Thm{T:cohrk} and \Thm{T:submod}, we get

\begin{corollary}\label{C:openval}
A submodule  $N\sub M$ is open \iff\ $\hlc{N_\pr}0{\pr R_\pr}=\hlc{M_\pr}0{\pr R_\pr}$ for all $\pr$, \iff\ $\fcyc RN=\fcyc RM$, \iff\ $\val N=\val M$. \qed
\end{corollary}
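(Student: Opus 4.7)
The plan is to cycle through the four equivalences by exploiting that the inclusion $N\sub M$ already provides a componentwise inequality, while each of the four conditions is a global equality; once the global sums agree, the pointwise inequalities must collapse to equalities. Concretely, by Theorem~\ref{T:cohrk} openness $\len N=\len M$ is the same as $\cohrk N=\cohrk M$, and the inclusion $N_\pr\into M_\pr$ combined with the left-exactness of local cohomology (the first half of \Lem{L:loccohgen}) gives injections $\hlc{N_\pr}0{\pr R_\pr}\into \hlc{M_\pr}0{\pr R_\pr}$, so that $\mul N\pr\leq\mul M\pr$ for every prime $\pr$, i.e.\ $\fcyc RN\aleq \fcyc RM$ as effective cycles.

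Assuming first $\cohrk N=\cohrk M$, I would compare Cantor coefficients to obtain $\sum_{\dim(R/\pr)=i}\mul N\pr=\sum_{\dim(R/\pr)=i}\mul M\pr$ for every $i$; combined with the termwise inequalities $\mul N\pr\leq\mul M\pr$, each equality of sums forces a termwise equality, and hence $\fcyc RN=\fcyc RM$. This in turn means $\mul N\pr=\mul M\pr$ for every $\pr$, and since $\hlc{N_\pr}0{\pr R_\pr}$ is a submodule of $\hlc{M_\pr}0{\pr R_\pr}$ of the same finite length, the two local cohomology modules coincide. Equality of every $\mul{\cdot}\pr$ trivially yields
\[
\val N=\sum_\pr\mul N\pr=\sum_\pr\mul M\pr=\val M,
\]
by \Cor{C:order}. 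Conversely, starting from $\val N=\val M$, the same ``nonnegative terms with equal totals'' principle applied to $\mul N\pr\leq\mul M\pr$ forces $\fcyc RN=\fcyc RM$, whence $\cohrk N=\cohrk M$ by definition, and thus $\len N=\len M$ by Theorem~\ref{T:cohrk}, closing the loop.

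The only substantive ingredient beyond unwinding definitions is this elementary observation that a finite sum of nonnegative integers $a_\pr\leq b_\pr$ with equal totals must be equal slot by slot; everything else is carried by \Thm{T:cohrk}, \Cor{C:order}, and the left-exactness of $\Gamma_\pr$. There is no real obstacle here — the heavy lifting has already been done in the proof of \Thm{T:cohrk} and in \Lem{L:loccohgen}.
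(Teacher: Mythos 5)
Your proof is correct and follows essentially the route the paper intends: the corollary is stated as an immediate consequence of \Thm{T:cohrk} (together with the termwise inequalities $\mul N\pr\leq\mul M\pr$ coming from left-exactness of $\Gamma_\pr$, which is exactly how the first part of \Thm{T:submod} is proved). Your explicit "nonnegative terms with equal totals collapse termwise" step is precisely the implicit content of the paper's \qed.
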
 

\begin{remark}\label{R:loccoh}
In particular,   the long exact sequence of local cohomology yields that $N\sub M$ is open \iff\  the canonical morphism $\hlc{M_\pr/N_\pr}0{\pr R_\pr}\to \hlc{N_\pr}1{\pr R_\pr}$ is injective, for every (associated) prime $\pr$ (of $M$).
\end{remark}

\begin{proposition}\label{P:big}
Given an exact sequence $$\Exactseq NMQ,$$  if $\dim(Q)<\order  {}M$, then $N$ is open.   In particular, any non-zero ideal in a domain is open, and more generally, any ideal in an  unmixed ring containing a parameter is open. 
 \end{proposition}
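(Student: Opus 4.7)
By \Cor{C:openval}, showing $N$ is open amounts to proving that $\fcyc RN=\fcyc RM$, i.e. that $\op{Ass}(N)=\op{Ass}(M)$ and $\mul N\pr=\mul M\pr$ for every such associated prime. The plan is to localize the given short exact sequence at each $\pr\in\op{Ass}(M)$ and show $Q$ disappears there.

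By \Cor{C:order}, the hypothesis $\dim Q<\order{}M$ says that every $\pr\in\op{Ass}(M)$ satisfies $\dim(R/\pr)>\dim Q$. I would then argue that such a $\pr$ cannot lie in $\op{Supp}(Q)$: if it did, $\pr$ would contain some $\primary\in\op{Ass}(Q)$, giving $\dim(R/\pr)\leq\dim(R/\primary)\leq\dim Q$, a contradiction. Hence $Q_\pr=0$, and localizing the exact sequence yields $N_\pr\iso M_\pr$. Taking $\pr R_\pr$-torsion gives $\hlc{N_\pr}0{\pr R_\pr}=\hlc{M_\pr}0{\pr R_\pr}$, whence $\mul N\pr=\mul M\pr\neq0$, so in particular $\pr\in\op{Ass}(N)$. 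Combined with the reverse inclusion \eqref{eq:ass1}, this gives $\op{Ass}(N)=\op{Ass}(M)$ with matching local multiplicities, so $\fcyc RN=\fcyc RM$ and $N$ is open by \Cor{C:openval}.

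For the two consequences, apply this to the sequence $\Exactseq IR{R/I}$. If $R$ is a $d$-dimensional domain, then $\op{Ass}(R)=\{(0)\}$, so $\order{}R=d$ by \Cor{C:order}; any non-zero $x\in I$ forces $\dim(R/I)\leq\dim(R/xR)<d$ (a chain of primes of length $d+1$ starting from $(0)$ and then from a prime containing $x$ would exceed $\dim R=d$), so the hypothesis $\dim(R/I)<\order{}R$ is met. If $R$ is unmixed of dimension $d$, then $\order{}R=d$ by the remark preceding \Cor{C:unmadd}, and an ideal $I$ containing a parameter $x$ satisfies $\dim(R/I)\leq\dim(R/xR)<d=\order{}R$ by definition of a parameter.

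The only mild subtlety is the support argument $Q_\pr=0$ for $\pr\in\op{Ass}(M)$, but this is a standard consequence of comparing dimensions, and no homological input beyond left-exactness of local cohomology (already implicit in \Lem{L:loccohgen}) is needed, so I expect no real obstacle in the writeup.
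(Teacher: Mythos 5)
Your proof is correct, but it takes a genuinely different route from the paper's. The paper argues purely with ordinal arithmetic: writing $\opord n=\sum a_i\omega^i$, $\opord m=\sum b_i\omega^i$, $\opord q=\sum c_i\omega^i$ for the lengths of $N$, $M$, $Q$, the upper bound of semi-additivity gives $b_i\leq a_i+c_i$ for all $i$; since $\deg\opord q=\dim Q<\order{}M$ (by \Thm{T:dim}), we have $c_i=0$ whenever $b_i\neq 0$, hence $\opord m\aleq\opord n$, and the reverse inequality always holds, so $\len N=\len M$. You instead localize at each $\pr\in\op{Ass}(M)$, use \Cor{C:order} and a support/dimension comparison to see that $Q_\pr=0$, conclude $N_\pr\iso M_\pr$, and invoke \Cor{C:openval}. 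Both arguments are sound and both ultimately rest on \Thm{T:cohrk}; the paper's version is shorter once the ordinal machinery is in place and never needs to touch localization, while yours is more transparent about the geometric reason the statement holds ($Q$ is invisible at every associated prime of $M$) and directly establishes the a priori stronger conclusion $\fcyc RN=\fcyc RM$ rather than just $\len N=\len M$ (equivalent by \Cor{C:openval}, but your route makes it explicit). Your treatment of the two consequences matches the paper's in substance — the paper only writes out the unmixed/parameter case and lets the domain case follow, whereas you verify both, including the small point that a nonzero element of a $d$-dimensional domain cuts the dimension down, which is fine.
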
 
\begin{proof}
Let ${\opord n}=\sum a_i\omega^i$, ${\opord m}=\sum b_i\omega^i$, and ${\opord q}=\sum c_i\omega^i$ be the respective lengths of $N$, $M$ and $Q$. By semi-additivity, ${\opord m}\leq{\opord q}\ssum{\opord n}$, whence $b_i\leq a_i+c_i$, for all $i$. By assumption, $c_i=0$ whenever $b_i\neq 0$, so that in fact $b_i\leq a_i$, for all $i$, that is to say, ${\opord m}\aleq {\opord n}$. Since the other inequality always holds,  $N$ is open.   To prove the last assertion, let $x$ be a parameter in a $d$-dimensional unmixed ring $R$, so that $R/xR$ has dimension at most  $d-1$. Since $\order{}R=d$ by \Thm{T:cohrk}, our first assertion shows that the ideal $(x)$, and hence any ideal containing $x$, is open.  
\end{proof} 

Let us call a submodule $N\sub M$ \emph{\add}, if $\Exactseq NM{M/N}$ is \add, that is to say, by \eqref{eq:quot}, if $\len M=\len N\ssum\hd N$. Hence a direct summand is \add\ by semi-additivity. By \Cor{C:orddim}, any submodule   $N$ such that  $\dim N\leq \order {}{M/N}$, is \add, but the converse need not hold. 
 
\begin{proposition}\label{P:osplitopen}
A maximal (proper) submodule is either \add\ or open. In particular, if $M$ has positive order, then any maximal submodule is open.
\end{proposition}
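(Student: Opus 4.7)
The plan is to apply semi-additivity to the exact sequence $\Exactseq N M {M/N}$. Since $N$ is a maximal proper submodule, the quotient $M/N$ is a simple $R$-module, so $\len{M/N}=1$, and by \eqref{eq:quot} we have $\hd N = 1$. Theorem~\ref{T:semadd} then yields
\begin{equation*}
1 \ordsum \len N \;\leq\; \len M \;\leq\; \len N \ssum 1.
\end{equation*}

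Next I will unpack the outer terms using Cantor normal forms. Shuffling in a $1$ just increments the $0$-th Cantor coefficient, so $\len N \ssum 1$ coincides with the ordinary ordinal successor of $\len N$; on the other hand, $1 \ordsum \len N$ equals $\len N$ when $\len N$ is infinite (the leading $1$ is absorbed) and equals $\len N \ssum 1$ when $\len N$ is finite. In either case the two bounds differ by at most one, so $\len M$ is forced to be either $\len N$, in which case $N$ is open by definition, or $\len N \ssum 1 = \len N \ssum \hd N$, in which case the defining sequence $\Exactseq N M {M/N}$ is equilateral and so $N$ is an equilateral submodule.

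For the final assertion I will argue by contradiction. If $\order{}M \geq 1$, then by \Cor{C:order} the $0$-th Cantor coefficient $\mul M 0$ vanishes. The equilateral alternative would give $\len M = \len N \ssum 1$, whence $\mul M 0 = \mul N 0 + 1 \geq 1$, contradicting $\mul M 0 = 0$. Hence the open alternative must hold for every maximal proper submodule.

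The whole argument is essentially a short computation in ordinal arithmetic once semi-additivity has pinned $\len M$ to a two-element set; the only mild subtlety is the case split for $1 \ordsum \len N$, which is not a genuine obstacle.
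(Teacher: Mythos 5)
Your proof is correct and follows essentially the same route as the paper: squeeze $\len M$ between $\len N$ and its successor $\len N\ssum 1$ via semi-additivity applied to $\Exactseq NM{M/N}$, identify the two possibilities with openness and equilaterality, and rule out the successor alternative when $\len M$ is a limit ordinal (positive order). The only cosmetic difference is that you track the lower bound $1\ordsum\len N$ explicitly and phrase the last step via the $0$-th Cantor coefficient, where the paper simply notes that $\len N\ssum 1$ is a successor ordinal.
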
 
\begin{proof}
Let $N\varsubsetneq M$ be maximal, so that $Q:=M/N$ is simple, of length one. Let ${\opord n}$ and ${\opord m}$ be the respective lengths of $N$ and $M$. By semi-additivity, we have ${\opord n}\leq{\opord m}\leq{\opord n}\ssum 1$. If the former inequality holds, the submodule is open, and if the latter holds, it is \add. The latter case is excluded when $\opord m$ is a limit ordinal, that is to say, when $M$ has positive order.
\end{proof} 

In the ring case, we can even prove:

\begin{proposition}\label{P:lenmax}
If $(R,\maxim)$ is a non-Artinian  local ring, then $\maxim$ is open.
\end{proposition}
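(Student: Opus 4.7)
The plan is to start from Proposition~\ref{P:osplitopen}: since $\maxim$ is a maximal proper submodule of $R$, it is automatically either \add\ or open, so I only need to rule out the \add\ case under the non-Artinian hypothesis. Recall that a local Noetherian ring of dimension $0$ is Artinian, so $R$ non-Artinian gives $\dim R\ge 1$, and \Thm{T:dim} then ensures $\deg\len R=\dim R\ge 1$, in particular $\len R\ge\omega$.

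Next I would translate the hypothetical \add\ equality $\len R=\len\maxim\ssum 1$ into information about Cantor coefficients. Comparing degree-$0$ coefficients on both sides forces $\mul R0=\mul\maxim 0+1$. By \Thm{T:cohrk} (and since $\maxim$ is the only possible $0$-dimensional prime), the $0$-th Cantor coefficient of a module equals the length of its $\maxim$-local cohomology, so this says
\[
\len_R \hlc R0\maxim = \len_R \hlc \maxim0\maxim+1.
\]
The key step is then to observe that $\hlc\maxim0\maxim=\hlc R0\maxim\cap\maxim$ (the $\maxim$-torsion of a submodule is the intersection of the submodule with the ambient torsion), and the quotient $\hlc R0\maxim/\bigl(\hlc R0\maxim\cap\maxim\bigr)$ embeds into $R/\maxim=k$. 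Hence this quotient has length at most $1$, with equality occurring exactly in the \add\ case, which means $\hlc R0\maxim\not\subset\maxim$.

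The final step, and what I expect to be the cleanest conclusion rather than the main obstacle, is to derive a contradiction from the existence of a unit $u\in\hlc R0\maxim$: since $u$ is annihilated by some power $\maxim^n$ and is invertible, $\maxim^n=0$, so $R$ has nilpotent maximal ideal and is therefore Artinian, contradicting the hypothesis. Thus the \add\ alternative is impossible and $\maxim$ must be open. The only slightly delicate point is the local-cohomology identification of the $0$-th Cantor coefficient, but this is immediate from \Thm{T:cohrk} once one notes that in a local ring the only associated prime of dimension $0$ can be $\maxim$ itself.
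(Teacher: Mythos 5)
Your proof is correct, but it takes a genuinely different route from the paper's. The paper also begins with \Prop{P:osplitopen}, but then handles the \add\ alternative by induction on $\len R$: if $\maxim$ is an associated prime, pick $x$ with $\ann{}x=\maxim$, pass to $\bar R=R/xR$ using \Cor{C:orddim} on the sequences $\Exactseq{xR}R{\bar R}$ and $\Exactseq{xR}\maxim{\maxim\bar R}$ to get $\len R=\len{\bar R}\ordsum 1$ and $\len\maxim=\len{\maxim\bar R}\ordsum 1$, and conclude by the induction hypothesis applied to $\bar R$ (which is still non-Artinian local since $\len{\bar R}\geq\omega$). You instead avoid induction entirely: you translate the hypothetical equality $\len R=\len\maxim\ssum 1$ through \Thm{T:cohrk} into $\lenmod{}{\hlc R0\maxim}=\lenmod{}{\hlc\maxim 0\maxim}+1$, note that $\hlc\maxim 0\maxim=\hlc R0\maxim\cap\maxim$ with quotient embedding in $R/\maxim$, and deduce that the \add\ case forces a unit into the $\maxim$-torsion, whence $\maxim^n=0$ and $R$ is Artinian --- the desired contradiction. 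All the steps check out: in a local ring $\maxim$ is the unique zero-dimensional prime, so $\mul R0=\lenmod{}{\hlc R0\maxim}$ as you claim, and the finite-length additivity you use on the torsion modules is unproblematic since $\hlc R0\maxim$ has finite length over a Noetherian local ring. The trade-off is that your argument leans on the cohomological characterization \Thm{T:cohrk}, whereas the paper's stays closer to the bare semi-additivity machinery; in exchange you get a direct, induction-free proof that also isolates the concrete obstruction (a unit killed by a power of $\maxim$).
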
 
\begin{proof}
  Let ${\opord m}$ and ${\opord r}$ be the respective lengths of $\maxim$ and $R$. We induct on $\opord r$. In view of \Prop{P:osplitopen},  to rule out  that $\maxim$ is \add, we may assume that it is an associated prime. 
%
%
  Choose $x\in R$ with $\ann{}x=\maxim$ and put $\bar R:=R/xR$. Since $xR$ has length one,  \Cor{C:orddim} applied to the exact sequences  $\Exactseq{xR}R{\bar R}$ and $\Exactseq{xR}\maxim{\maxim\bar R}$ yields ${\opord r}=\len{\bar R}\ssum 1$ and ${\opord m}=\len{\maxim\bar R}\ssum 1$. By induction, $\len{\maxim\bar R}=\len{\bar R}$, and hence ${\opord m}={\opord r}$. 
\end{proof} 

\begin{remark}\label{R:lenprim}
As for primary ideals $\mathfrak n$, they will not be open in general if $R$ has depth zero. More precisely, suppose  $\len R=\opord r\ssum n$ with $\opord r$ a limit ordinal and $n\in\nat$. If $\mathfrak n\lc R\maxim  =0$ (which will be the case if $\len{R/\mathfrak n}\geq n$), then $\len {\mathfrak n}=\opord r$. Indeed, the case $n=0$ is trivial, and we may always reduce to this since $\mathfrak n$ is   a module over $R/\lc \maxim R$, and the latter has length $\opord r$ by \Prop{P:big}. 
\end{remark}

\begin{corollary}\label{C:openemb}
If $N\sub M$ is open, then  $\op{Supp}(M/N)$ is nowhere dense in $\op{Supp}(M)$.  The converse   holds if $M$ has no embedded primes. 
\end{corollary}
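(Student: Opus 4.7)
The plan is to translate both directions into statements about primes via Corollary~\ref{C:openval}: $N\sub M$ is open \iff\ $\mul N\pr=\mul M\pr$ for every prime $\pr$, equivalently \iff\ $\fcyc RN=\fcyc RM$. On the topological side, since $M/N$ is finitely generated, $\op{Supp}(M/N)$ is closed, and the irreducible components of $\op{Supp}(M)=V(\op{Ann}(M))$ are cut out by the minimal primes of $\op{Ann}(M)$, which coincide with the minimal members of $\op{Ass}(M)$. Hence $\op{Supp}(M/N)$ is nowhere dense in $\op{Supp}(M)$ \iff\ it contains no minimal prime of $\op{Supp}(M)$.

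For the forward direction, suppose $N$ is open and let $\pr$ be a minimal prime of $\op{Supp}(M)$. Then $\pr\in\op{Ass}(M)$, so $\mul M\pr\neq 0$. After localizing at $\pr$ and using \eqref{eq:lcloc}, both $M_\pr$ and $N_\pr$ are $\pr R_\pr$-primary, hence of finite length equal to $\mul M\pr$ and $\mul N\pr$ respectively. Openness forces these finite lengths to agree, and since $N_\pr\sub M_\pr$ this gives $N_\pr=M_\pr$, i.e.\ $\pr\notin\op{Supp}(M/N)$.

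For the converse, assume $M$ has no embedded primes, so every associated prime of $M$ is a minimal prime of $\op{Supp}(M)$. I verify $\mul N\pr=\mul M\pr$ at every prime $\pr$. If $\pr\notin\op{Ass}(M)$, then $\mul M\pr=0$, and by \eqref{eq:ass1} also $\pr\notin\op{Ass}(N)$, so $\mul N\pr=0$. If $\pr\in\op{Ass}(M)$, then by the hypothesis $\pr$ is a minimal prime of $\op{Supp}(M)$, hence not in $\op{Supp}(M/N)$, so $N_\pr=M_\pr$ and the local multiplicities again coincide. Corollary~\ref{C:openval} then yields that $N$ is open.

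The main obstacle is not technical but conceptual: the ``no embedded primes'' hypothesis in the converse is exactly what routes \emph{every} associated prime of $M$ through a generic point of $\op{Supp}(M)$, so that the topological condition controls all local multiplicities contributing to $\fcyc RM$; without it, an embedded associated prime could contribute to $\mul M\pr$ while never appearing as a minimal prime of the support, and the nowhere-dense assumption would impose no constraint at such a $\pr$.
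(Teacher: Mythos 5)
Your proof is correct. The forward direction is essentially the paper's: at a minimal prime $\pr$ of $\op{Supp}(M)$ the whole of $M_\pr$ is $\pr R_\pr$-torsion, so the equality of zeroth local cohomologies supplied by Corollary~\ref{C:openval} forces $N_\pr=M_\pr$ (you phrase this as equality of finite lengths of $N_\pr\sub M_\pr$, which amounts to the same thing). In the converse you take a genuinely different, and in fact more elementary, route. The paper invokes Corollary~\ref{C:semiaddcyc} (hence Lemma~\ref{L:loccohgen}) to produce the cycle $D=\fcyc{}N+\fcyc{}{M/N}-\fcyc{}M$ and then argues that $\fcyc{}M-\fcyc{}N$, being supported on $\op{Ass}(M)$ yet equal to $\fcyc{}{M/N}-D$ whose support avoids $\op{Ass}(M)$, must vanish. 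You instead check $\mul N\pr=\mul M\pr$ prime by prime: off $\op{Ass}(M)$ both multiplicities vanish by \eqref{eq:ass1}, and on $\op{Ass}(M)$ the no-embedded-primes hypothesis makes $\pr$ minimal in $\op{Supp}(M)$, so the nowhere-dense hypothesis gives $(M/N)_\pr=0$ outright and the two localizations coincide. This needs nothing beyond exactness of localization and $\op{Ass}(N)\sub\op{Ass}(M)$, bypassing the cycle machinery entirely; what the paper's route buys is a reusable structural statement (the cycle identity of Corollary~\ref{C:semiaddcyc}) rather than an ad hoc local check. Your closing remark correctly identifies where the embedded-primes hypothesis has teeth.
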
 
\begin{proof}
Let $Q:=M/N$. The condition on the supports means that no minimal prime $\pr$ of $M$  lies in the support of $Q$. However, for such $\pr$, we have $M_\pr=\hlc{M_\pr}0{\pr R_\pr}$.   Hence $N_\pr=M_\pr$ by \Cor{C:openval},  showing that $Q_\pr=0$. Suppose next that $M$ has no embedded primes and $\op{Supp}(Q)$ is nowhere dense in $\op{Supp}(M)$. Let $D:=\fcyc {}N+\fcyc{}Q-\fcyc {}M$ as given by   \Cor{C:semiaddcyc}, and let $\pr$ be an associated prime of $Q$. Since $\op{Supp}(Q)$ is nowhere dense, $\pr$ is not an associated prime of $M$. Since the cycle $\fcyc {}M-\fcyc {}N$ has   support in $\op{Ass}(M)$ and is equal to $ \fcyc {}Q-D$, it must be the zero cycle, since $\fcyc {}Q$ and $D$ have support disjoint from $\op{Ass}(M)$. Hence $N\sub M$ is open by \Cor{C:openval}.
\end{proof} 

It is useful to reformulate the above equivalence for modules without embedded primes: $N\sub M$ is open \iff\ $\op{Ass} (M)\cap\op{Ass}(M/N)=\emptyset$.
\Cor{C:openemb} together with \Prop{P:big}  yields:

\begin{corollary}\label{C:nillow}
If $N\sub M$ is open then   $\op{dim}(M/N)<\op{dim}(M)$, and the converse holds if $M$ is  unmixed.\qed
\end{corollary}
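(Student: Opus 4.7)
My plan is to reduce both directions to \Cor{C:openemb}, which translates openness of $N\sub M$ into a topological condition on $\op{Supp}(M/N)$; the remaining task in each direction is to pass between that topological condition and the dimension inequality.

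For the forward implication, I would first invoke \Cor{C:openemb} to conclude that $\op{Supp}(M/N)$ is nowhere dense in $\op{Supp}(M)$. In a Noetherian space, nowhere density of a closed subset is the same as missing every generic point, so no minimal prime of $M$ lies in $\op{Supp}(M/N)$. Consequently every $\pr\in\op{Supp}(M/N)$ strictly contains some minimal prime $\pr'$ of $M$; since $R$ has finite Krull dimension, such strict containment forces $\op{dim}(R/\pr)<\op{dim}(R/\pr')$ (prepend $\pr'$ to a maximal ascending chain from $\pr$). Taking suprema then yields $\op{dim}(M/N)<\op{dim}(M)$.

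For the converse, I will assume $M$ is unmixed of dimension $d$, so that $\op{Ass}(M)$ consists precisely of minimal primes $\pr$ with $\op{dim}(R/\pr)=d$ and $M$ has no embedded primes. Given $\op{dim}(M/N)<d$, my plan is to establish $\op{Ass}(M)\cap\op{Supp}(M/N)=\emptyset$: if some $\pr\in\op{Ass}(M)$ belonged to $\op{Supp}(M/N)$, it would contain an associated prime $\mathfrak q$ of $M/N$, producing the contradiction $d=\op{dim}(R/\pr)\leq\op{dim}(R/\mathfrak q)\leq\op{dim}(M/N)<d$. Under unmixedness, associated primes and minimal primes coincide, so $\op{Supp}(M/N)$ contains no generic point of $\op{Supp}(M)$ and is nowhere dense there. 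The converse half of \Cor{C:openemb}, which is precisely what the absence of embedded primes unlocks, then gives that $N\sub M$ is open.

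The only mild obstacle is the topological dictionary in the forward direction---identifying nowhere density in $\op{Supp}(M)$ with avoidance of minimal primes of $M$, together with the Noetherian observation that $\pr'\subsetneq\pr$ strictly lowers $\op{dim}(R/\cdot)$ under finite-dimensionality---but both points can be dispatched in a single line.
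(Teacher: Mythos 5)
Your proof is correct. The forward direction is exactly the paper's route: \Cor{C:openemb} plus the standard dictionary between nowhere density of a closed subset of a Noetherian space and avoidance of generic points, together with the observation that a strict inclusion of primes strictly drops $\op{dim}(R/\cdot)$ in finite Krull dimension. For the converse, however, you and the paper diverge: the paper invokes \Prop{P:big}, noting that unmixedness means $\order{}M=\dim M$, so $\dim(M/N)<\dim M=\order{}M$ gives openness directly by a Cantor-coefficient/semi-additivity computation; you instead pass through the converse half of \Cor{C:openemb}, using that unmixedness implies the absence of embedded primes and that $\dim(M/N)<d$ forces $\op{Supp}(M/N)$ to miss every ($d$-dimensional, minimal) associated prime of $M$. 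Both are legitimate one-step reductions to results already established; the paper's choice via \Prop{P:big} is marginally more economical (it needs only the ordinal inequality $\len M\leq\len Q\ssum\len N$), whereas yours leans on the cycle-theoretic \Cor{C:semiaddcyc} hiding inside the converse of \Cor{C:openemb}. Your intermediate claim that $\op{Ass}(M)\cap\op{Supp}(M/N)=\emptyset$ is argued correctly (any support prime contains an associated prime of $M/N$, whose dimension is bounded by $\dim(M/N)<d$).
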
  \begin{proposition}\label{P:dimopen}
Let $M$ be a $d$-dimensional module. The following are equivalent for a submodule $N\sub M$:
\begin{enumerate}
\item \label{i:subopen} $N$ is open in $M$;
\item\label{i:subsubopen} $\dim M/N<d$ and $\fl d_{d-1}(N)$ is open in $\fl d_{d-1}(M)$;
\item\label{i:subdimopen} $\dim{\fl d_i(M)/\fl d_i(N)}<i$,  for all $i$.
\end{enumerate}
\end{proposition}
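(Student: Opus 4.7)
The plan is to leverage the dimension filtration formula \eqref{eq:dimfil} together with the multiplicity/dimension criterion of openness (\Cor{C:openval}, \Cor{C:nillow}). The essential compatibility is that the filtration respects the inclusion $N\sub M$: since the dimension of an element $x\in N$ (namely $\dim R/\ann Rx$) is independent of the ambient module, we have $\fl d_i(N)=N\cap\fl d_i(M)$, which yields a natural injection $\fl d_i(N)/\fl d_{i-1}(N)\into\fl d_i(M)/\fl d_{i-1}(M)$ with cokernel $\fl d_i(M)/(\fl d_{i-1}(M)+\fl d_i(N))$ (by the second isomorphism theorem, using that $\fl d_{i-1}(M)\cap\fl d_i(N)=\fl d_{i-1}(M)\cap N=\fl d_{i-1}(N)$).

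For the equivalence (1)$\Leftrightarrow$(3), I would apply \eqref{eq:dimfil} to both $M$ and $N$. By \Prop{P:dimfil}, each non-zero graded piece is unmixed of dimension $i$, so writing $\len M=\Ssum a_i\omega^i$ and $\len N=\Ssum a'_i\omega^i$, the coefficient $a_i$ (respectively $a'_i$) is the generic length of the $i$-th graded piece of $M$ (respectively $N$). By \Cor{C:openval}, $N$ is open in $M$ iff $a_i=a'_i$ for every $i$, iff each layer inclusion $\fl d_i(N)/\fl d_{i-1}(N)\into\fl d_i(M)/\fl d_{i-1}(M)$ is open. Since the target is unmixed of dimension $i$ (or zero), \Cor{C:nillow} re-expresses this as the cokernel condition $\dim\fl d_i(M)/(\fl d_{i-1}(M)+\fl d_i(N))<i$ for all $i$. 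To compare this with condition (3), use the short exact sequence
\[
0\to\fl d_{i-1}(M)/\fl d_{i-1}(N)\to\fl d_i(M)/\fl d_i(N)\to\fl d_i(M)/(\fl d_{i-1}(M)+\fl d_i(N))\to 0,
\]
combined with the identity $\dim B=\max(\dim A,\dim C)$ for a short exact sequence $0\to A\to B\to C\to 0$: a quick induction on $i$ (anchored at $\fl d_{-1}=0$) shows that the cokernel condition and condition (3) are equivalent.

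For (1)$\Leftrightarrow$(2), I would observe that $\fl d_i(\fl d_{d-1}(M))=\fl d_i(M)$ for every $i\leq d-1$ (and similarly for $N$), since the property of having dimension $\leq i$ is absolute. Hence condition (2) amounts to condition (3) at $i=d$ (i.e., $\dim M/N<d$) together with condition (3) for all $i\leq d-1$, the latter obtained by applying the already established equivalence (1)$\Leftrightarrow$(3) to the inclusion $\fl d_{d-1}(N)\sub\fl d_{d-1}(M)$ of $(d-1)$-dimensional modules. This identifies (2) with (3), and hence with (1).

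The main obstacle is the initial bookkeeping: carefully verifying $\fl d_i(N)=N\cap\fl d_i(M)$, extracting the resulting embedding of graded pieces, and unambiguously identifying the cokernel. Once that is in place, everything becomes a transparent three-way translation between Cantor coefficients of lengths, openness of the unmixed layers of the dimension filtration, and dimensions of the quotients $\fl d_i(M)/\fl d_i(N)$.
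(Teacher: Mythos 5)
Your proof is correct, but it is organized rather differently from the paper's. The paper dispatches \implication{i:subopen}{i:subdimopen} (and \implication{i:subopen}{i:subsubopen}) in one line by combining $\fl d_i(N)=N\cap \fl d_i(M)$ with \Thm{T:cantop} (the intersection of an open with a submodule is open in that submodule) and \Cor{C:nillow}; it proves \implication{i:subsubopen}{i:subopen} by top additivity (\Cor{C:topadd}) for the degree-$d$ coefficient together with \Prop{P:dimfil} for the lower ones; and it proves \implication{i:subdimopen}{i:subopen} by an induction on $i$ that reuses the equivalence \equivalence{i:subopen}{i:subsubopen} at each level of the filtration. You instead never invoke \Thm{T:cantop} or top additivity: you set up the embeddings of graded pieces $\fl d_i(N)/\fl d_{i-1}(N)\into \fl d_i(M)/\fl d_{i-1}(M)$, identify their cokernels, and run the entire three-way translation through the Cantor coefficients via \eqref{eq:dimfil}, \Cor{C:openval} and the unmixed case of \Cor{C:nillow}, with a small induction on $i$ to convert the cokernel condition into condition \eqref{i:subdimopen}; condition \eqref{i:subsubopen} is then absorbed into \eqref{i:subdimopen} using $\fl d_i(\fl d_{d-1}(M))=\fl d_i(M)$ for $i\le d-1$. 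What your route buys is self-containedness within the length calculus --- in particular it avoids the paper's forward reference to \Thm{T:cantop}, which appears only in the following section --- at the cost of more bookkeeping with the layer cokernels; the identity $\fl d_i(N)=N\cap\fl d_i(M)$, the exact sequence you write down, and the convention that dimension $<i$ at $i=0$ means the zero module are all sound, so there is no gap.
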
 
\begin{proof}
Since $\fl d_i(N)=\fl d_i(M)\cap N$, assumption~\eqref{i:subopen} implies the other two  by \Cor{C:nillow} and \Thm{T:cantop} below. Assume \eqref{i:subsubopen} holds. By the first of these two conditions and top additivity applied to $\Exactseq NM{M/N}$, we get $\mul Md=\mul Nd$. By
\Prop{P:dimfil}, the second condition yields $\len N^{<d}=\len M^{<d}$, and hence $\len N=\len M$, that is to say, $N$ is open. Finally, assuming \eqref{i:subdimopen}, let us prove by induction on $i$ that $\fl d_i(N)$ is open in $\fl d_i(M)$, where the case $i=d$ gives the desired result. If $i=0$, the assumption states that $\fl d_0(N)$ and $\fl d_0(M)$ are actually equal, and so we may assume $i>0$. Applying the first equivalence to the inclusion $\fl d_i(N)\sub\fl d_i(M)$ and using that by induction $\fl d_{i-1}(N)$ is open in $\fl d_{i-1}(M)$, we see that the former is open too.
\end{proof} 

In some cases, we have to check less:

\begin{proposition}\label{P:openid}
Let $I$ be an ideal in a $d$-dimensional ring $R$. If  $\dim{R/I}<d$ and $I$ contains an ideal $J$ such that $R/J$ is unmixed of dimension $d$, then $I$ is open.
\end{proposition}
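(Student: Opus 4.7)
The plan is to exploit the chain $J \sub I \sub R$ by applying the unmixedness of $R/J$ in two ways, and then sandwich $\len I$ between two copies of $\len R$. The key observation is that $I/J$ sits inside $R/J$ as a submodule with quotient isomorphic to $R/I$, so it inherits the favorable hypotheses from $R/J$.

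First, I would show that $I/J$ is open in $R/J$. Since $(R/J)/(I/J) \iso R/I$ has dimension strictly less than $d = \dim(R/J)$, and $R/J$ is unmixed, the converse direction of \Cor{C:nillow}---which holds precisely in the unmixed case---yields $\len{I/J} = \len{R/J}$. Note that the hypothesis $\dim R/I < d$ forces $I \varsupsetneq J$, so there is no degenerate case to worry about here.

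Second, I would invoke \Cor{C:unmadd} on the exact sequence $\Exactseq JR{R/J}$. Since $R$ has dimension $d$ and $R/J$ is unmixed of dimension $d$, this sequence is strongly \add, which gives $\len R = \len{R/J} \ordsum \len J$.

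Finally, the lower bound in semi-additivity (\Thm{T:semadd}) applied to $\Exactseq JI{I/J}$ yields $\len I \geq \len{I/J} \ordsum \len J$. Combining this with the two previous observations produces
$$\len I \geq \len{R/J} \ordsum \len J = \len R,$$
while the inclusion $I \sub R$ gives $\len I \leq \len R$ by \Thm{T:submod}. Hence $\len I = \len R$, meaning $I$ is open in $R$. The argument is essentially a single sandwich estimate, so no real obstacle arises; one only has to verify that the hypotheses of the two cited corollaries line up with the setup, which they do since $R/J$ is unmixed of dimension $d = \dim R$.
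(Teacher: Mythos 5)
Your proof is correct, but it takes a genuinely different route from the paper's. Both arguments hinge on the sequence $\Exactseq JR{R/J}$ being strongly \add\ (you cite \Cor{C:unmadd}, the paper cites \Cor{C:orddim}; these are the same fact), but the final squeeze is carried out differently. The paper writes $\len R=e\omega^d\ssum\opord a$, uses top additivity on $\Exactseq IR{R/I}$ to pin down the leading coefficient $\mul Id=e$, computes $\len J=(e-b)\omega^d\ssum\opord a$ explicitly, and then compares Cantor coefficients through the chain $\len J\aleq\len I\aleq\len R$ furnished by \Thm{T:submod}. You instead transfer the problem to $R/J$: the hypothesis $\dim{R/I}<d$ together with unmixedness makes $I/J$ open in $R/J$ (the converse direction of \Cor{C:nillow}, or directly \Prop{P:big} since $\order{}{R/J}=d$), and then the lower bound of semi-additivity on $\Exactseq J{I}{I/J}$ gives $\len I\geq\len{I/J}\ordsum\len J=\len{R/J}\ordsum\len J=\len R$, with the reverse inequality automatic. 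Your version avoids all Cantor normal form bookkeeping and never invokes \Thm{T:submod} or top additivity; what the paper's computation buys in exchange is the explicit value of $\len J$, which makes visible exactly where each coefficient goes. Both are complete proofs; yours is arguably the cleaner sandwich.
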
 
\begin{proof}
Let $\len R=e\omega^d\ssum\opord a$ with $\opord a$ of degree at most $d-1$. By assumption,  $\len {R/J}=b\omega^d$, for some non-zero $b$. Since $R/I$ has dimension less than $d$, top additivity (\Cor{C:topadd}) applied to $\Exactseq IR{R/I}$ yields $\mul Id=\mul Rd=e$. Hence $\len I=e\omega^d\ssum\opord b$, for some ordinal $\opord b$ of degree at most $d-1$. On the other hand, by \Cor{C:orddim}, the exact sequence $\Exactseq JR{R/J}$ is \add, so that $\len R=\len J\ssum b\omega^d$, and hence $\len J=(e-b)\omega^d\ssum\opord a$. By \Thm{T:submod}, we have 
$$
\len J=(e-b)\omega^d\ssum\opord a\aleq{}\len I=e\omega^d\ssum\opord b\aleq{} e\omega^d\ssum\opord a=\len R.
$$
 Therefore, $\opord a=\opord b$, showing that $I$ is open. 
\end{proof} 

\begin{remark}\label{R:openid}
The module version is proven exactly in the same way: given a module $M$ and submodules $K\sub N$ such that $M/K$ is unmixed of dimension $d:=\dim M$ and $M/N$ has dimension strictly less than $d$, then $N$ is open.
Note that   $M/K$ is unmixed \iff\  $\fl d_{d-1}(M)\sub K$, and so, an equivalent criterion is that $\dim {M/N}<d$ and $\fl  d_{d-1}(M)\sub N$, then $N$ is open. The converse may fail: in $R=\pol k{x,y,z}/(x^2,xy)$, the ideal $(xz,y)$ is open by \cite[Proposition 3.10]{SchBinEndo}, but it does not contain $\fl d_1(R)=(x)$.
\end{remark} 

\begin{corollary}\label{C:minidopen}
In an equidimensional ring $R$,  a prime ideal is open \iff\ it is  non-minimal. If, moreover, $\dim R>0$, then $R$ admits a composition series consisting entirely of height one ideals (and the zero ideal). 
\end{corollary}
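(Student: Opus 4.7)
For the equivalence, I plan to combine \Prop{P:openid} with \Cor{C:nillow}. A minimal prime $\pr$ satisfies $\dim(R/\pr)=d=\dim R$, so $\pr$ is not open by \Cor{C:nillow}. Conversely, for a non-minimal prime $\pr$, equidimensionality supplies a minimal prime $\pr_0$ strictly below $\pr$, giving $\dim(R/\pr)<\dim(R/\pr_0)=d$. Setting $J:=\sqrt{0}\sub\pr_0\sub\pr$, the quotient $R/J$ is reduced and equidimensional of dimension $d$, hence unmixed of dimension $d$, and so \Prop{P:openid} applies with $I=\pr$ to yield that $\pr$ is open.

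For the composition-series assertion with $d=\dim R>0$, set $\opord r:=\len R$ and pick a height-one prime $\pr$ of $R$; this exists since $d\geq 1$, for instance as a prime minimal over a principal ideal generated by an element avoiding every minimal prime (itself available by prime avoidance), since such a prime has height one by Krull's principal ideal theorem. By the equivalence just proved, $\pr$ is open, so $\len\pr=\opord r$. Moreover, any open sub-ideal of $\pr$ has height exactly one: its height is at most one by containment in the height-one prime $\pr$, and at least one by openness, applying the equivalence to every minimal prime.

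It therefore suffices to construct a composition series $\pr=J_0\supsetneq J_1\supsetneq\cdots\supsetneq 0$ of $\pr$ as an $R$-module, of length $\opord r$, all of whose nonzero members are open in $R$. I would build this by running the recursion from the proof of \Thm{T:chain} applied to the module $\pr$, while enforcing at every stage that the chosen sub-ideal is open. At each successor stage the prescribed rank is realized by an open sub-ideal, produced by intersecting a candidate sub-ideal with a sufficiently generic height-one ideal to preserve the fundamental cycle of $R$ (using \Cor{C:openval}); at each limit stage one takes the intersection of the open sub-ideals accumulated so far, which remains open until it drops to zero, at which point the recursion ends. The main obstacle is the transfinite bookkeeping needed to guarantee that the chain so constructed has order type exactly $\opord r+1$ in $\gr R$: at each limit ordinal one must verify that the intersection of the previously chosen open sub-ideals lands at the expected rank (neither jumping higher nor collapsing prematurely), which amounts to matching accumulated ranks with the Cantor decomposition of $\opord r$ step by step.
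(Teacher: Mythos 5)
Your proof of the equivalence is correct and is essentially the paper's argument: you invoke \Cor{C:nillow} for minimal primes and \Prop{P:openid} for non-minimal ones, the only difference being your choice $J=\sqrt{0}$ where the paper takes $J=\primary$ a minimal prime contained in $\pr$ (so that $R/J$ is a $d$-dimensional domain); both choices yield an unmixed quotient of dimension $d$, so this part is fine.

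The second half, however, has a genuine gap, and it comes from aiming at a stronger target than the proof needs. You set out to build a composition series of $\pr$ \emph{all of whose nonzero members are open in $R$}, and the construction you sketch --- intersecting candidates with ``sufficiently generic'' height-one ideals, then matching accumulated ranks against the Cantor decomposition at limit stages --- is precisely the part you leave unresolved. The ``transfinite bookkeeping'' you flag at the end is not bookkeeping: it is the entire difficulty, and nothing in the paper provides a mechanism for forcing every member of a composition series to be open (indeed \Thm{T:chain} gives no control of this kind, and \Examp{E:maxnoncompser} shows how delicate such chains are). The intended argument is a one-liner that bypasses openness of the members entirely: since $\pr$ is open, $\len\pr=\len R$, so a composition series of the \emph{module} $\pr$, which exists by \Thm{T:chain}, is a chain in $\grass R\pr=\term{\grass RR}\pr\sub\grass RR$ of length $\len R$, hence a composition series of $R$, and its nonzero members are ideals contained in the height one prime $\pr$. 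Your instinct that openness would pin the height down to exactly one (containment in $\pr$ by itself only bounds the height above) is a legitimate observation about the statement, but it does not license replacing a complete argument with an unfinished construction; as written, the second assertion is not established.
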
 
\begin{proof}
Let $d=\dim R$, let $\pr$ be a non-minimal prime ideal, and let $\primary$ be a minimal prime ideal contained in $\pr$. Hence $\len{R/\primary}=\omega^d$  and $\dim {R/\pr}<d$, so that $\pr$ is open by \Prop{P:openid}. On the other hand, $\primary$ is not open by \Cor{C:nillow}, proving the converse. 

Let $\pr$ be a height one prime. By what we just proved, $\len \pr=\len R$, and hence, there exists a composition series inside $\pr$ by \Thm{T:chain}, the non-zero members of which therefore have height one.
\end{proof}

We conclude with some applications of open submodules to morphisms (more will be proven in \S\ref{s:degrad} below):

\begin{corollary}\label{C:zerohom}
If $\op{dim}(M)<\order {}N$, then $\hom RMN=0$.
\end{corollary}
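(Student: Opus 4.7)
Suppose for contradiction that there exists a non-zero $f \in \hom RMN$, and let $I := f(M) \subseteq N$ be its image. The plan is to derive a contradiction by showing that $I$ would have to have order strictly greater than its own dimension.

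First, since $I$ is a quotient of $M$, we have $\dim I \leq \dim M$. On the other hand, $I$ is a non-zero submodule of $N$, so by \Thm{T:submod} we have $\len I \aleq \len N$, i.e.\ $\mul Ii \leq \mul Ni$ for every $i$. This forces the order of $I$ to satisfy $\order{}I \geq \order{}N$: indeed, whenever $i < \order{}N$, we have $\mul Ni = 0$, hence $\mul Ii = 0$.

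Finally, by \Cor{C:order}, the order of a non-zero module equals the minimal dimension of an associated prime, which is bounded above by the dimension of the module itself, giving $\order{}I \leq \dim I$. Chaining the inequalities yields
\[
\order{}N \leq \order{}I \leq \dim I \leq \dim M < \order{}N,
\]
a contradiction. Hence $I = 0$, i.e.\ $f = 0$. There is no real obstacle here; the argument is essentially a one-line consequence of \Thm{T:submod} combined with the identification of order with minimal associated-prime dimension from \Cor{C:order}.
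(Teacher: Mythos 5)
Your argument is correct. Each link in the chain checks out: the image $I=f(M)$ is a quotient of $M$, so $\dim I\leq\dim M$; it is a submodule of $N$, so \Thm{T:submod} gives $\len I\aleq\len N$ and hence $\order{}I\geq\order{}N$ whenever $I\neq 0$; and $\order{}I\leq\deg{\len I}=\dim I$ by \Cor{C:order} together with \Thm{T:dim}. The resulting strict cycle of inequalities forces $I=0$.

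The route is genuinely different from the paper's, though both rest ultimately on \Thm{T:cohrk}. The paper argues element by element: for $x\in M$ one has $\ann Rx\sub\ann R{f(x)}$, so $f(x)$ has dimension at most $\dim M<\order{}N$ and therefore lies in $\fl d_{\order{}N-1}(N)$, which vanishes by \Prop{P:dimfil}. That version isolates the underlying structural fact --- a module has no non-zero elements of dimension below its order --- and is slightly more economical, needing only the dimension-filtration computation. Your version trades that for the submodule-length theorem \Thm{T:submod} applied to the whole image at once; it packages the same dichotomy (quotients of $M$ are low-dimensional, non-zero submodules of $N$ have no low-dimensional associated primes) at the level of Cantor coefficients rather than of individual elements. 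Both proofs are sound and of comparable length; yours has the mild aesthetic advantage of never mentioning the dimension filtration, the paper's of never invoking the (less elementary) converse-flavored machinery behind \Thm{T:submod} --- though strictly speaking only the easy first assertion of that theorem is used.
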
 
\begin{proof}
%
Let $d<e$ be the respective dimension of $M$ and order of $N$, and let $x\in M$. Since $x$ has dimension at most $d$, so does $f(x)$. Hence $f(x)$ must be zero, since $\fl d_{e-1}(N)=0$ by \Prop{P:dimfil}.
\end{proof}

\begin{theorem}\label{T:restdom}
If $N$ is an open submodule of an unmixed module $M$, then two endomorphisms of $M$ are equal \iff\ their restriction to $N$ are equal.
\end{theorem}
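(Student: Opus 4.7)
The plan is to reduce to a single endomorphism by taking the difference, and then use the dimensional constraints forced by unmixedness together with openness.

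First I would replace $f$ and $g$ by the single endomorphism $h:=f-g$ of $M$, so that the statement becomes: if $h|_N=0$ then $h=0$. Since $N\subseteq\ker h$, the homomorphism $h$ factors as $M\twoheadrightarrow M/N\xrightarrow{\bar h} M$, and in particular $h(M)$ is isomorphic to a quotient of $M/N$. By \Cor{C:nillow} (the forward direction, which only uses openness, not unmixedness), $\dim(M/N)<\dim M=d$, hence $\dim h(M)<d$ as well.

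The key step is now to observe that the submodule $h(M)\subseteq M$ must be zero. Indeed, $M$ is unmixed of dimension $d$, meaning every associated prime of $M$ has dimension $d$; by the containment $\op{Ass}(h(M))\subseteq\op{Ass}(M)$ of \eqref{eq:ass1}, any non-zero submodule of $M$ has only $d$-dimensional associated primes and hence has dimension exactly $d$ (by \Cor{C:order} or \Thm{T:dim}). Since $\dim h(M)<d$, we are forced to conclude $h(M)=0$, i.e.\ $f=g$. The converse direction is trivial.

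There is essentially no obstacle: the real content is packaged into the two earlier results (openness forces the quotient to drop dimension, and unmixedness forces non-zero submodules to attain the full dimension), and the proof is just the syllogism combining them. Note that the hypothesis that $M$ itself is unmixed is used in an essential way: without it $M$ could contain a non-zero submodule of lower dimension (an embedded part) and the argument would break down.
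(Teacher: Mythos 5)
Your proposal is correct and follows essentially the same route as the paper: openness forces $\dim(M/N)<\dim M$ via \Cor{C:nillow}, and unmixedness then kills the induced map $M/N\to M$. The only cosmetic difference is that you re-derive the vanishing of $h(M)$ directly from $\op{Ass}(h(M))\sub\op{Ass}(M)$, whereas the paper cites \Cor{C:zerohom} (whose proof is the same observation, phrased via the dimension filtration) and packages the reduction to $h=f-g$ as injectivity of $\ndo M\to\hom{}NM$ in the Hom exact sequence.
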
 
\begin{proof}
Since $N$ is open, $M/N$ has dimension strictly less than the dimension, whence the order, of $M$ by \Cor{C:nillow}. Hence $\hom {}{M/N}M=0$ by \Cor{C:zerohom}. From the canonical exact sequence
$$
0\to \hom{}{M/N}N \to\ndo M\to\hom{}NM
$$ 
we get an inclusion $\ndo M\to \hom{}NM$, from which our claim follows.
\end{proof}

\section{The canonical topology}

As the name indicates, there is an underlying topology. To prove this, we need:

\begin{theorem}\label{T:cantop}
The inverse image of an open submodule under a morphism is again open. In particular, if $U\sub  M$ is open, and $N\sub M$ is arbitrary, then $N\cap U$ is open in $N$.
\end{theorem}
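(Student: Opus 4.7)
The plan is to use the local-cohomology characterization of openness from \Cor{C:openval}: a submodule $U\sub M$ is open \iff\ $\hlc{U_\pr}0{\pr R_\pr}=\hlc{M_\pr}0{\pr R_\pr}$ for every prime ideal $\pr$. Let $f\colon M'\to M$ be a morphism and assume $U\sub M$ is open. Set $V:=f^{-1}(U)=\ker(M'\to M/U)$. Since localization is exact, we have $V_\pr=\ker(M'_\pr\to M_\pr/U_\pr)=f_\pr^{-1}(U_\pr)$ for every prime $\pr$.

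The main step is to verify, for each $\pr$, the inclusion $\hlc{M'_\pr}0{\pr R_\pr}\sub\hlc{V_\pr}0{\pr R_\pr}$ (the reverse inclusion is automatic since $V_\pr\sub M'_\pr$). Given $x\in\hlc{M'_\pr}0{\pr R_\pr}$, the element $f_\pr(x)$ is annihilated by some power of $\pr R_\pr$, whence $f_\pr(x)\in\hlc{M_\pr}0{\pr R_\pr}$. Openness of $U$ in $M$ gives $\hlc{M_\pr}0{\pr R_\pr}=\hlc{U_\pr}0{\pr R_\pr}\sub U_\pr$, so $f_\pr(x)\in U_\pr$, which means $x\in f_\pr^{-1}(U_\pr)=V_\pr$. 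Since $x$ is still killed by a power of $\pr R_\pr$ when viewed in the submodule $V_\pr$, we conclude $x\in\hlc{V_\pr}0{\pr R_\pr}$, as wanted. Applying \Cor{C:openval} once more, $V$ is open in $M'$.

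For the ``in particular'' statement, one simply applies the first assertion to the inclusion morphism $\iota\colon N\into M$: the inverse image $\iota^{-1}(U)$ is by definition $N\cap U$, so this submodule is open in $N$.

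I do not expect any genuine obstacle: the whole argument is a one-line diagram chase once the right characterization of openness is in hand. The only potential pitfall is conflating $f^{-1}(U)_\pr$ with $f_\pr^{-1}(U_\pr)$, but exactness of localization makes these agree, so the argument passes through without complication.
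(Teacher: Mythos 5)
Your proposal is correct, but it takes a genuinely different route from the paper. You prove the general statement first, by a direct diagram chase through the cohomological characterization of openness (\Cor{C:openval}): every element of $\hlc{M'_\pr}0{\pr R_\pr}$ maps into $\hlc{M_\pr}0{\pr R_\pr}=\hlc{U_\pr}0{\pr R_\pr}\sub U_\pr$, hence already lies in $V_\pr=f_\pr^{-1}(U_\pr)$, and the identification $V_\pr=f_\pr^{-1}(U_\pr)$ via exactness of localization is exactly the point one must check; you then obtain the ``in particular'' clause by specializing to the inclusion $N\into M$. The paper argues in the opposite order and stays entirely inside the combinatorial length theory: it first treats the intersection case by applying semi-additivity to the exact sequence $\Exactseq{N\cap U}{N\oplus U}{N+U}$ (using that $N+U$, containing the open $U$, is open), and then deduces the general case by the graph trick, projecting $(M\oplus V)\cap G$ isomorphically onto $\inverse fV$. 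Your version buys brevity and a strictly stronger local conclusion (equality of the zeroth local cohomologies at every prime, not merely equality of lengths), at the cost of invoking the full strength of \Thm{T:cohrk} through \Cor{C:openval}; the paper's version uses only semi-additivity and so is independent of the cohomological interpretation of length. Both are valid given the paper's standing finite-dimensionality hypothesis, under which \Cor{C:openval} is available.
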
 
\begin{proof}
We start with proving the second assertion. Let ${\opord m}:=\len M$, ${\opord n}:=\len N$ and ${\opord a}:=\len{N\cap U}$. We have an exact sequence
$$
\Exactseq{N\cap U}{N\oplus U}{N+U}.
$$
Since $N+U$ is again open, semi-additivity yields $\len{N\oplus U}\leq {\opord a}\ssum{\opord m}$. Since the former is equal to ${\opord n}\ssum{\opord m}$, we get ${\opord n}\leq {\opord a}$, showing that $N\cap U$ is open in $N$. To prove the first assertion, let $f\colon M\to N$ be an arbitrary \homo\ and let $V\sub N$ be an open submodule. Let $G\sub M\oplus N$ be the graph of $f$ and let $p\colon G\to M$ be the projection onto the first coordinate. Since $M\oplus V$ is open in $M\oplus N$ by semi-additivity, $(M\oplus V)\cap G$ is open in $G$, by what we just proved. Since $p$ is an isomorphism, the image of $(M\oplus V)\cap G$ under $p$ is therefore open  in $M$. But this image is just $\inverse fV$, and so we are done.
\end{proof}

We can now define a topology on $M$ by letting the collection of open submodules be a basis of open neighborhoods of $0\in M$. This is indeed a basis since the intersection of finitely many opens is again open by \Thm{T:cantop}.  An arbitrary open in this topology is then a  (possibly infinite) union of cosets $x+U$ with $U\sub M$ open and $x\in M$. If a submodule $N$ is a union of cosets $x_i+U_i$ of open submodules $U_i\sub M$, then one such coset,  $x+U$ say, must contain $0$, so that $U\sub N$, and hence the inequalities $\len U\leq\len N\leq \len M$ are all equalities, showing that $N$ is indeed open in the previous sense. 

We call this the \emph{canonical topology} on $M$, and \Thm{T:cantop} shows that any \homo\ is continuous in the canonical topology. Moreover,  multiplication on any ring is continuous: given $a_1,a_2\in R$ and an open ideal $I$ such that $a_1a_2\in I$, let $J_i:=a_iR+ I$. Hence $J_i$ is an open neighborhood of $a_i$ and $J_1\cdot J_2\sub a_1a_2R+I=I$. If $M$ has dimension zero, then the canonical topology is trivial, since $M$ is then the only open submodule. 
The complement of an open module $N$ is the union of all cosets $a+N$ with $a\notin N$, and hence is also open.  In particular, an open module is also closed, and the quotient topology on $M/N$ is discrete, whence in general  different from the canonical topology. The zero module is closed \iff\ the intersection of all open submodules is $0$, that is to say, \iff\ the canonical topology is Haussdorf. 
%
%
\begin{corollary}\label{C:nonArt}
A module is non-Artinian \iff\ its canonical topology is non-trivial.
\end{corollary}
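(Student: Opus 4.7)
The plan is to prove both directions via the semi-additivity machinery. For the easy direction, if $M$ is Artinian then, being finitely generated over a Noetherian ring, it has finite length, so $\len M\in\nat$ by \Thm{T:dim}; every proper submodule $N\subsetneq M$ has strictly smaller length and thus fails to be open. Hence $M$ is the only open submodule, the neighborhood basis of $0$ is $\{M\}$, and the canonical topology reduces to $\{\emptyset,M\}$.

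For the substantive direction, assume $M$ is non-Artinian, so $\dim M\geq 1$ by \Thm{T:dim}, and I want to produce a proper open submodule. Applying \Prop{P:osplitopen} directly would work if $\order{}M\geq 1$, but $M$ may have zero-dimensional embedded associated primes. I remove this obstruction by passing to $\bar M:=M/T$, where $T:=\fl d_0(M)$ is the zero-dimensional piece of the dimension filtration. Then $T$ has dimension zero (hence finite length), $\bar M$ is nonzero with $\dim\bar M=\dim M\geq 1$, and all associated primes of $\bar M$ have positive dimension, so $\order{}{\bar M}\geq 1$. Choose a maximal proper submodule $\bar N\subsetneq\bar M$ (which exists since $\bar M$ is finitely generated and nonzero); by \Prop{P:osplitopen}, $\bar N$ is open in $\bar M$.

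Set $N:=\pi^{-1}(\bar N)$ under $\pi\colon M\to\bar M$; then $T\subseteq N\subsetneq M$, the last inclusion being strict because $M/N\iso\bar M/\bar N\neq 0$. It remains to verify $\len N=\len M$. By \Prop{P:dimfil}, the sequence $0\to T\to M\to\bar M\to 0$ is strongly \add, yielding $\len M=\len{\bar M}\ordsum\len T$. The sequence $0\to T\to N\to\bar N\to 0$ is likewise strongly \add\ by \Cor{C:orddim}: since $\op{Ass}(\bar N)\subseteq\op{Ass}(\bar M)$ consists of positive-dimensional primes, we have $\order{}{\bar N}\geq 1\geq\dim T$. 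Hence $\len N=\len{\bar N}\ordsum\len T$, and openness of $\bar N$ in $\bar M$ gives $\len{\bar N}=\len{\bar M}$, so $\len N=\len M$.

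The main obstacle is precisely the possible presence of embedded zero-dimensional associated primes, which prevents a one-line appeal to \Prop{P:osplitopen}; the dimension-filtration quotient cleanly removes this degeneracy, and strong additivity (via \Prop{P:dimfil} and \Cor{C:orddim}) acts as the bridge that lifts openness from $\bar M$ back up to $M$.
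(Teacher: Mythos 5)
Your proof is correct, but it follows a genuinely different route from the paper's. The paper argues by transfinite induction on $\len M$, splitting into cases according to whether $\len M$ is a limit or a successor ordinal: in the limit case it picks a submodule of \hdim\ $1$ and invokes \Prop{P:big}, and in the successor case it factors out a submodule $H$ of \hdim\ $\opord n$, applies the induction hypothesis to $M/H$, and pulls back using lower semi-additivity. You instead eliminate the only obstruction to a direct appeal to \Prop{P:osplitopen} --- zero-dimensional associated primes --- in a single step, by passing to $\bar M=M/\fl d_0(M)$, which has positive order by Schenzel's description of the dimension filtration; a maximal submodule $\bar N$ of $\bar M$ is then open, and you transport openness back to $N=\pi^{-1}(\bar N)$ via the strong equilaterality of the two sequences $\Exactseq TM{\bar M}$ and $\Exactseq TN{\bar N}$ (\Prop{P:dimfil} and \Cor{C:orddim}). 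All the checks go through: $\bar M\neq0$ since $M$ is non-Artinian, $\dim T\leq 0\leq\order{}{\bar N}$ since $\op{Ass}(\bar N)\sub\op{Ass}(\bar M)$, and $N\varsubsetneq M$ since $\bar N\varsubsetneq\bar M$. Your argument is non-inductive and arguably cleaner, at the cost of leaning on the heavier structural machinery (\Thm{T:cohrk}, \Cor{C:order}, \Prop{P:dimfil}), whereas the paper's induction uses only semi-additivity and \Prop{P:big}. One small simplification available to you: since $N=\inverse\pi{\bar N}$, its openness in $M$ follows at once from \Thm{T:cantop} (continuity of morphisms), which is proved immediately before this corollary, so the explicit equilaterality computation in your last step could be replaced by a one-line citation.
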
 
\begin{proof}
One direction is immediate since an Artinian module has no proper open submodules. For the converse, we show, by induction on  $\len M$, that $M$ has a non-trivial, open submodule.   Assume first that ${\opord m}$ is a limit ordinal. Choose a  submodule $N$ of $M$ of \hdim\ $1$. By \eqref{eq:quot}, this means $\len{M/N}=1$, and so $N$ is open by \Prop{P:big}. 
Next, assume ${\opord m}={\opord n}\ssum 1$. Let $H$ be a submodule of \hdim\ ${\opord n}$, so that by \eqref{eq:quot} again, $\bar M:=M/H$ has length ${\opord n}$. By induction, we can find a proper open submodule of $\bar M$, that is to say, we can find $N\varsubsetneq M$ containing $H$ such that $\len{N/H}={\opord n}$. Semi-additivity applied to the inclusion $H\sub N$ yields ${\opord n}\ordsum \len H\leq\len N$. Since $\len H\neq 0$ and $\len N\leq {\opord n}\ssum 1$, we get equality, that is to say, $\len N={\opord n}\ssum 1$, whence $N$ is open.
\end{proof}

\begin{proposition}\label{P:clzero}
The closure of a submodule   $N\sub M$ is equal to $N+\fl d_0(M)$. In particular, a submodule is closed \iff\ $\mul N0=\mul M0$.
\end{proposition}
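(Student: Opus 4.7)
The plan is to write $\bar N = \bigcap_U (N+U)$, with $U$ ranging over open submodules of $M$, and to establish each inclusion separately.

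For $N + \fl d_0(M) \sub \bar N$, the key observation is that every open $U$ contains $\fl d_0(M)$, whence $N + \fl d_0(M) \sub N+U$ for all open $U$. To prove this containment, \Cor{C:openval} gives $\mul U \pr = \mul M \pr$ for every prime $\pr$; at a $0$-dimensional associated prime $\pr$ of $M$ (necessarily maximal), the local cohomologies equal the full localizations, so $U_\pr = M_\pr$ and $(M/U)_\pr = 0$. For $m \in \fl d_0(M)$, every associated prime of $R\bar m \sub M/U$ is $0$-dimensional and, via the inclusion $\ann{R}{m} \sub \ann{R}{\bar m}$, lies in $\op{Ass}(M)$; hence $(R\bar m)_\pr \sub (M/U)_\pr = 0$ at each such prime, forcing $R\bar m = 0$ and so $m \in U$.

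For the reverse inclusion, I reduce to the case $\fl d_0(M) = 0$. The strongly equilateral sequence $\Exactseq{\fl d_0(M)}U{U/\fl d_0(M)}$ from \Prop{P:dimfil}, combined with the previous step, shows that $U \mapsto U/\fl d_0(M)$ is a bijection between open submodules of $M$ and open submodules of $M'' := M/\fl d_0(M)$. Tracking cosets through $\pi \colon M \to M''$ then yields $\bar N = \pi^{-1}(\overline{\pi(N)})$, reducing the proposition to the assertion that, in a module $M''$ with $\fl d_0(M'') = 0$, every submodule is closed.

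To prove this reduced claim, let $N'' \sub M''$ and $x \notin N''$, and set $Q := M''/N''$. Choose a maximal ideal $\maxim \supseteq \ann{R}{\bar x}$, so $\bar x \neq 0$ in $Q_\maxim$. Since $\fl d_0(M'') = 0$, no maximal ideal is associated to $M''$; as each positive-dimensional $\pr \in \op{Ass}(M'')$ is strictly contained in $\maxim$, prime avoidance yields an $M''$-regular element $r \in \maxim$. Multiplication by $r^n$ is then injective on $M''$, producing an isomorphism $M'' \iso r^nM''$ with equal fundamental cycles, so $r^nM''$ is open in $M''$ by \Cor{C:openval}. Krull's intersection theorem on the finitely generated $R_\maxim$-module $Q_\maxim$ with $r \in \maxim R_\maxim$ gives $\bigcap_n r^nQ_\maxim = 0$; since $\bar x \neq 0$ in $Q_\maxim$, some $n$ satisfies $\bar x \notin r^nQ_\maxim$, whence $x \notin N'' + r^nM''$.

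The second assertion then follows: $N$ is closed iff $\fl d_0(M) \sub N$ iff $\fl d_0(N) = \fl d_0(M)$ (using $\fl d_0(N) = N \cap \fl d_0(M)$, and the fact that equality of these finite-length modules is equivalent to equality of lengths), which by \Prop{P:dimfil} is exactly $\mul N 0 = \mul M 0$. I expect the main obstacle to be the reduction combined with the Krull-type argument in the penultimate paragraph; the latter works precisely because $\fl d_0(M'') = 0$ ensures that every maximal ideal supporting $\bar x$ avoids $\op{Ass}(M'')$, providing the regular element $r$ needed.
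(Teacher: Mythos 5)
Your overall strategy is the same as the paper's: show that $\fl d_0(M)$ lies in every open submodule, reduce modulo $\fl d_0(M)$ by identifying the quotient topology with the canonical topology on $M/\fl d_0(M)$, and then prove that every submodule is closed in a module of positive order via Krull's intersection theorem. Your final step is a mild variant: you produce an $M''$-regular element $r\in\maxim$ by prime avoidance and use the opens $N''+r^nM''$, whereas the paper uses the opens $N+\maxim^kM$ furnished directly by \Prop{P:big}; both work, though your phrase ``each positive-dimensional $\pr\in\op{Ass}(M'')$ is strictly contained in $\maxim$'' is not literally true --- what prime avoidance actually gives is that $\maxim$, being maximal and unequal to every (non-maximal) associated prime, cannot lie in their union.

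The one genuine error is in your first step. From $\hlc{U_\pr}0{\pr R_\pr}=\hlc{M_\pr}0{\pr R_\pr}$ at a $0$-dimensional associated prime $\pr$ you conclude ``$U_\pr=M_\pr$ and $(M/U)_\pr=0$''; this is false, because a maximal associated prime need not be minimal in $\op{Supp}(M)$, so $\hlc{M_\pr}0{\pr R_\pr}$ need not be all of $M_\pr$. In the paper's own Example~\ref{E:adictop}, $R=\pow k{x,y}/(x^2,xy)$ has $\maxim=(x,y)$ as a $0$-dimensional associated prime, and the ideal $U=(x,y^2)$ is open, yet $(R/U)_\maxim\iso k[y]/(y^2)\neq0$. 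The conclusion you want ($\fl d_0(M)\sub U$) is nevertheless correct and the repair is short: for $m\in\fl d_0(M)$ and any maximal $\pr$ containing $\ann Rm$, the image of $m$ in $M_\pr$ is killed by a power of $\pr R_\pr$, hence lies in $\hlc{M_\pr}0{\pr R_\pr}=\hlc{U_\pr}0{\pr R_\pr}\sub U_\pr$; for every other maximal $\pr$ one has $m/1=0$ in $M_\pr$, so the image of $m$ in $M/U$ vanishes at every maximal ideal and $m\in U$. (The paper avoids all of this by invoking \Thm{T:cantop}: $U\cap\fl d_0(M)$ is open in $\fl d_0(M)$, which has finite length and hence trivial topology, so $U\supseteq\fl d_0(M)$.) With this step repaired, the rest of your argument --- the reduction via \Prop{P:dimfil}, the Krull step, and the derivation of the second assertion --- is correct.
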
 
\begin{proof}
Let $H:=\fl d_0(M)$, and let $W$ be an open submodule containing $N$. By \Thm{T:cantop}, the intersection $H\cap W$ is open in $H$, and since $H$ has finite length (so that its topology is trivial), we must have $H=W\cap H$, proving that $H$ lies in $W$. As this holds for all opens  $W$ containing $N$, the closure of $N$ contains $H$.

Using \Cor{C:orddim}, one easily shows that the quotient topology on $M/H$ is equal to the canonical topology. Therefore, to calculate the closure, we may divide out $H$, assume that $M$ has positive order, and we then need to show that $N$ is closed. Let $x\in M$ be any   element not in $N$  and let $\maxim$ be a maximal ideal containing $(N:x)$. Since $M/\maxim^kM$ has finite length, each $\maxim^kM$ is open by \Prop{P:big}, whence so is each $N+\maxim^kM$. If $x$ is contained in each of these, then it is contained in their intersection $W:=\cap(N+\maxim^kM)$. By Krull's Intersection theorem, there exists $a\in \maxim$ such that $(1+a)W\sub N$. In particular, $1+a\in(N:x)\sub \maxim$, contradiction. So $x$ lies outside some open $N+\maxim^kM$, and hence does not belong to the closure of $N$. 
The last assertion is now also clear by \Thm{T:cohrk}, since $\mul M0$ is  the length of $\fl d_0(M)$.
\end{proof}

\begin{corollary}\label{C:hauss}
    A module has a separated canonical topology \iff\ its order is positive \iff\ any submodule is closed.\qed
\end{corollary}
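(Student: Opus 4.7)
The key tool is \Prop{P:clzero}, which identifies the closure of any submodule $N\sub M$ with $N+\fl d_0(M)$. My plan is to show that all three conditions are equivalent to the single algebraic condition $\fl d_0(M)=0$, and then to relate this condition to the order of $M$ using \Cor{C:order} (or equivalently \Prop{P:dimfil}).

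First I would handle the equivalence between separatedness and $\fl d_0(M)=0$. The canonical topology is separated \iff\ the zero submodule is closed, as noted just before the corollary. By \Prop{P:clzero}, the closure of $\zeroid M$ is $\fl d_0(M)$, so the zero submodule is closed \iff\ $\fl d_0(M)=0$. Next, for the equivalence with ``every submodule is closed'': again by \Prop{P:clzero}, a submodule $N$ is closed \iff\ $\fl d_0(M)\sub N$. Applying this to $N=\zeroid M$ gives $\fl d_0(M)=0$ as a necessary condition, and conversely, if $\fl d_0(M)=0$ then the containment $\fl d_0(M)\sub N$ is automatic for every $N$, making every submodule closed.

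It remains to show $\fl d_0(M)=0$ \iff\ $\order{}M>0$. By \Prop{P:dimfil}, we have $\len{\fl d_0(M)}=\len M^{\leq 0}=\mul M0$, so $\fl d_0(M)=0$ \iff\ $\mul M0=0$, i.e., \iff\ $M$ has no $0$-dimensional associated prime. By \Cor{C:order}, the order $\order{}M$ is precisely the smallest dimension of an associated prime of $M$, so the absence of $0$-dimensional associated primes is equivalent to $\order{}M>0$.

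I do not anticipate a real obstacle here: the content of the corollary is essentially a repackaging of \Prop{P:clzero} together with the description of the order from \Cor{C:order}. The only minor care needed is to verify that ``separated'' really does mean ``$\zeroid M$ is closed'' in the canonical topology (which follows at once from the description of the open neighborhood basis), and to handle the trivial case $M=0$ consistently with whatever convention is adopted for $\order{}0$.
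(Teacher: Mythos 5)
Your proposal is correct and follows exactly the route the paper intends: the corollary carries a \qed\ because it is meant to be immediate from \Prop{P:clzero} (closure of $N$ equals $N+\fl d_0(M)$, and the remark that the topology is separated \iff\ $\zeroid M$ is closed), combined with the identification of $\fl d_0(M)=0$ with $\mul M0=0$, i.e., with positive order via \Cor{C:order}. No gaps; the reduction of all three conditions to $\fl d_0(M)=0$ is precisely the intended argument.
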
 

\begin{example}\label{E:adictop}
If $(R,\maxim)$ is   local and $M$ has positive depth, then the canonical topology refines the $\maxim$-adic topology on $M$: indeed $\maxim^k M$ is then open by \Prop{P:big}. We already showed that this is no longer true in rings of positive depth in \Rem{R:lenprim}.
Also note that the canonical topology on a local domain is strictly finer than its adic topology, since all non-zero ideals are open.   

The ring $\pow k{x,y}/(x^2,xy)$, of length $\omega\ssum 1$, is not Haussdorf as the closure of the zero ideal is the ideal $(x)$ by \Prop{P:clzero}. It is the only closed, non-open ideal, since the closure of any ideal must contain $x$ whence is open when different from $(x)$. In particular, whereas the canonical topology is not Haussdorf,  the adic one is.
\end{example} 

Recall that a submodule $N\sub M$ is called \emph{essential} (or \emph{large}), if it intersects any non-zero submodule non-trivially.

\begin{corollary}\label{C:bigbg}
An open submodule is {essential}. In particular,  $0$ is a limit point of any non-zero submodule.
\end{corollary}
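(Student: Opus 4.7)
The plan is to deduce both assertions directly from \Thm{T:cantop}. Let $N\sub M$ be open and $K\sub M$ a non-zero submodule. By the second part of \Thm{T:cantop} applied with $U=N$ in the role of the open, the intersection $K\cap N$ is open in $K$, meaning $\len{K\cap N}=\len K$. Since $K\neq 0$, we have $\len K\neq 0$, and hence $K\cap N\neq 0$. This is precisely the statement that $N$ is essential.

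For the ``in particular'' clause, recall that the open submodules form a basis of neighborhoods of $0$ for the canonical topology on $M$. So to say that $0$ is a limit point of a non-zero submodule $K$ amounts to verifying that every open submodule $N\sub M$ meets $K\setminus\{0\}$. But this is exactly the essentialness established in the first part: $N\cap K\neq 0$, so there is a non-zero element of $K$ in every basic open neighborhood of $0$.

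No real obstacle is anticipated, since \Thm{T:cantop} has already done the heavy lifting; the corollary is essentially a repackaging of that result in topological language. The only minor point to keep in mind is that $\len K\neq 0$ iff $K\neq 0$, which is immediate from the definition of length (only the zero module has length $0$).
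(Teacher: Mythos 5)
Your proof is correct. The main difference from the paper is in the first assertion: you deduce essentialness from the second part of \Thm{T:cantop} (``$K\cap N$ is open in $K$, hence has the same nonzero length as $K$''), whereas the paper argues directly from semi-additivity: if $H\cap N=0$ then $H\oplus N$ embeds into $M$, so $\len H\ssum\len N\leq\len M=\len N$, forcing $\len H=0$. The two routes are closely related --- the proof of \Thm{T:cantop} itself runs through semi-additivity applied to $\Exactseq{N\cap U}{N\oplus U}{N+U}$ --- but yours is a clean one-step citation of an already-established theorem, while the paper's is self-contained modulo \Thm{T:semadd} and makes the ordinal arithmetic visible. Since \Thm{T:cantop} precedes this corollary in the paper, your appeal to it is legitimate. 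Your handling of the second assertion (reducing ``limit point'' to meeting every basic open, i.e.\ every open submodule) matches the paper's, and your closing remark that $\len K\neq 0$ \iff\ $K\neq 0$ is the right small point to flag.
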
 
\begin{proof}
Let $N\sub M$   be open and  $H\sub M$   arbitrary. Suppose $H\cap N=0$, so that $H\oplus N$ embeds  as a submodule of $M$. In particular, $\len H\ssum\len N\leq \len M$ by semi-additivity, forcing $\len H$, whence $H$, to be zero.

To prove the second assertion, let $N$ be non-zero and let $U$ be an open containing $0$. Since $U$ is the union of cosets of open submodules and contains $0$, it must contain at least one open submodule $W$. Since $W$ is essential by our first assertion, $W\cap N\neq0$.
\end{proof} 

The converse is false: in an Artinian local ring, the socle is essential, but it is clearly not open.  A less trivial example is given by the ideal $\pr:=(x,y)$ in the ring $R=\pol k{x,y,z}/\pr^2$, which is essential but not open. Indeed, $\fcyc {}R=3[\pr]$ and hence $R$ has length $3\omega$ by \Thm{T:cohrk}. Since $R/\pr$ is a one-dimensional domain, its length is $\omega$ by  \Thm{C:orddim}, and hence $\len\pr=2\omega$ by \Cor{C:orddim}. To see that $\pr$ is essential, we can use the following proposition with $S=\pol k{x,y}/\pr^2$ (so that $R=\pol Sz$). 
 
\begin{proposition}\label{P:essff}
Let $(S,\pr)$ be a local ring and $S\to R$ a   flat extension. Then $\pr R$ is an essential ideal of $R$.
\end{proposition}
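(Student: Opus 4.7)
The plan is to argue the contrapositive: suppose $J\sub R$ is an ideal with $J\cap\pr R=0$, and deduce $J=0$. The immediate observation is that $\pr J\sub J\cap\pr R=0$, so $J\sub\ann R\pr$, the set of elements of $R$ annihilated by $\pr$. It therefore suffices to show $\ann R\pr\sub\pr R$, for then $J\sub\pr R$ forces $J=J\cap\pr R=0$.

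The key identity is $\ann R\pr=\ann S\pr\cdot R$, a standard consequence of flatness. Using Noetherianity of $S$, I would write $\pr=(f_1,\dots,f_n)$, and for each $i$ tensor the kernel sequence $0\to\ann S{f_i}\to S\xrightarrow{f_i} S$ with the flat module $R$ to conclude $\ann R{f_i}=\ann S{f_i}\cdot R$. Intersecting these over $i$ and invoking that flat extensions commute with finite intersections of ideals then yields the desired identity.

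Finally, $\ann S\pr\sub\pr$: any element outside $\pr$ is a unit in $S$, and a unit cannot annihilate $\pr$ unless $\pr=0$. Hence $\ann R\pr=\ann S\pr\cdot R\sub\pr R$, completing the argument.

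The main obstacle is really just the flat-module identity $\ann R\pr=\ann S\pr\cdot R$; the rest is formal. It is worth noting that the argument genuinely requires $\pr\neq 0$, an implicit hypothesis of the proposition, since the conclusion fails outright in the degenerate case that $S$ is a field and $R\neq 0$.
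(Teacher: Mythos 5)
Your proof is correct and follows essentially the same route as the paper's: reduce to showing that anything killed by $\pr$ lies in $\pr R$, via the flatness identity $\ann R\pr=\ann S\pr\cdot R$ together with $\ann S\pr\sub\pr$ (which uses locality of $S$). The paper leaves the flatness identity as a one-word appeal to flatness, whereas you spell out its derivation; your remark that the statement silently assumes $\pr\neq0$ is also a fair observation.
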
 
\begin{proof}
If $\pr R$ were not essential, we could find a non-zero $x\in R$ such that $\pr R\cap xR=0$. In particular, $x\pr=0$. By  flatness, $x\in\ann S\pr R\sub\pr R$, contradiction.
\end{proof}

\begin{theorem}\label{T:indtop}
If $M$ has no embedded primes, then the induced topology on a submodule $N\sub M$ is the same as the canonical topology on $N$.
\end{theorem}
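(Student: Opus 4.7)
The plan is to prove both containments between the two topologies on $N$. One direction---the induced topology is coarser than the canonical one---is immediate from \Thm{T:cantop}: the inclusion $N\into M$ pulls any open submodule of $M$ back to an open submodule of $N$. For the reverse, given an arbitrary open submodule $V$ of $N$, I must exhibit an open $U\sub M$ with $U\cap N\sub V$; in fact I expect equality.

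Two preparatory facts will do most of the work, both exploiting that $M$ has no embedded primes. First, $N$ has no embedded primes either: any $\pr\in\op{Ass}(N)\sub\op{Ass}(M)$ is minimal in $\op{Supp}(M)$ and thus also in the smaller $\op{Supp}(N)$. Second, $\op{Supp}(N/V)\cap\op{Ass}(M)=\emptyset$: openness of $V$ in $N$ gives $\op{Ass}(N/V)\cap\op{Ass}(N)=\emptyset$ by \Cor{C:openemb}, and a prime $\pr\in\op{Ass}(M)\cap\op{Supp}(N/V)$ would dominate some $\primary\in\op{Ass}(N/V)\sub\op{Supp}(N)\sub\op{Supp}(M)$, forcing $\pr=\primary$ by minimality of $\pr$ in $\op{Supp}(M)$; but $\pr$, being minimal in $\op{Supp}(M)\supseteq\op{Supp}(N)$, would then lie in $\op{Ass}(N)$, contradicting the first equality.

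The construction uses a primary decomposition of $V$ in $M$. Write $V=\bigcap_{\pr\in\op{Ass}(M/V)}V_\pr^M$, choosing $V_\pr^M$ to be the saturation $\{m\in M : sm\in V\text{ for some }s\in R\setminus\pr\}$ whenever $\pr$ is minimal in $\op{Ass}(M/V)$. Define
$$
U:=\bigcap_{\pr\in\op{Ass}(M/V)\setminus\op{Ass}(M)} V_\pr^M.
$$
Then $\op{Ass}(M/U)$ sits inside the index set of this intersection, so is disjoint from $\op{Ass}(M)$; \Cor{C:openemb} certifies $U$ as open in $M$.

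The crux---and the main obstacle---is the equality $U\cap N=V$. The containment $V\sub U\cap N$ is immediate; for the reverse, I show that every $\pr\in\op{Ass}(M/V)\cap\op{Ass}(M)$ contributes a vacuous term $V_\pr^M\cap N=N$. Such a $\pr$ is minimal in $\op{Supp}(M)\supseteq\op{Supp}(M/V)$, hence minimal in $\op{Ass}(M/V)$, so $V_\pr^M$ is the saturation described above. For $n\in N$, the cyclic module $R\bar n\sub N/V\sub M/V$ has $\op{Supp}(R\bar n)\sub\op{Supp}(N/V)$, which by preparatory fact~(ii) omits $\pr$; hence some $s\in R\setminus\pr$ kills $\bar n$, giving $sn\in V$ and placing $n$ in $V_\pr^M$. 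Now splitting
$$
V\;=\;V\cap N\;=\;\bigcap_{\pr\in\op{Ass}(M/V)}(V_\pr^M\cap N)
$$
according to whether $\pr\in\op{Ass}(M)$ or not, the former terms collapse to $N$ and the latter assemble precisely into $U\cap N$, yielding $V=U\cap N$ and completing the proof.
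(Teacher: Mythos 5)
Your proof is correct, and it takes a genuinely different route from the paper's. The paper argues by contradiction from a maximal choice: it picks $U\sub M$ maximal with $U\cap N=V$, assumes $U$ is not open, extracts from \Cor{C:openval} an associated prime $\pr$ of $M$ and an element $x\notin U$ with $s\pr x\sub U$, and uses maximality to produce a nonzero class in $\hlc{N_\pr/V_\pr}0{\pr R_\pr}$; since $M$ has no embedded primes, $\pr$ is then minimal over $N$, and \Lem{L:loccohgen} plus \Thm{T:cohrk} force $\len V<\len N$, contradicting openness. You instead build $U$ explicitly: take a primary decomposition of $V$ inside $M$ and discard exactly the components indexed by $\op{Ass}(M/V)\cap\op{Ass}(M)$; the key computation---that each discarded component contains all of $N$---rests on your preparatory fact that $\op{Supp}(N/V)$ misses $\op{Ass}(M)$, which plays the same role as the paper's local-cohomology contradiction but is phrased entirely in terms of supports and the reformulation of \Cor{C:openemb}. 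All the steps check out: the no-embedded-primes hypothesis is used exactly where needed (to pass it down to $N$, to identify $\op{Ass}(M)$ with the minimal primes of $\op{Supp}(M)$ so that the isolated components are the saturations, and to certify $U$ as open via the converse direction of \Cor{C:openemb}). What your approach buys is constructiveness---an explicit open $U$ with $U\cap N=V$ rather than one produced by a Noetherian maximality argument---and it avoids \Lem{L:loccohgen} and \Thm{T:cohrk} in the second half, at the cost of invoking the uniqueness of isolated primary components, which the paper never needs.
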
 
\begin{proof}
In view of \Thm{T:cantop}, we only have to show that if $W\sub N$ is open, then there exists $U\sub M$ open such that $W=U\cap N$. Let $U$ be maximal such that $U\cap N=W$. Suppose $U$ is not open, so that there exists an associated prime $\pr$ of $M$ with $\hlc{U_\pr}0{\pr R_\pr}\varsubsetneq \hlc{M_\pr}0{\pr R_\pr}$ by \Cor{C:openval}. In particular, we can find $x\in M\setminus U$ with $s\pr x\sub U$, for some $s\notin\pr$. By maximality, $(U+Rx)\cap N$ must contain an element $n$ not in $W$. Write $n=u+rx$ with $u\in U$ and $r\in R$. In particular, $s\pr n=s\pr u$ lies in $U\cap N=W$. In other words, we showed that $\hlc{N_\pr/W_\pr}0{\pr R_\pr}\neq 0$. In particular, $N_\pr\neq 0$, so that $\pr$ is a minimal prime of $N$. By \Lem{L:loccohgen}, we have $\mul W\pr  <\mul W\pr  +\mul  {W/N}\pr=\mul N\pr  $, so that $\len W<\len N$ by \Thm{T:cohrk}, contradiction.
\end{proof} 

I do not know whether this remains still true if there are embedded primes, but see \cite[Corollary 4.4]{SchCond}.

\section{Application I: Acyclicity}\label{s:acyc}
For the remainder of this paper $R$ is a $d$-dimensional Noetherian ring. Furthermore,   $M$, $N$, \dots are finitely generated modules over $R$, of length ${\opord m}$, ${\opord n}$, etc.
We start with reproving the observation of Vasconcelos \cite{VasFlat} that a surjective endomorphism on a Noetherian module must be an isomorphism (the usual proof uses the determinant trick; see for instance \cite[Theorem 2.4]{Mats}).

\begin{corollary}\label{C:Vasc}
Any surjective endomorphism  is an isomorphism.
\end{corollary}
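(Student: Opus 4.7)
The plan is simply to feed the surjection $f\colon M\onto M$ into the Semi-additivity Theorem and watch the ordinal arithmetic do the work. Let $K:=\ker f$, so that we have a short exact sequence
$$\Exactseq KMM$$
where the quotient is identified with $M$ via $f$. Writing $\opord m:=\len M$ and $\opord k:=\len K$, \Thm{T:semadd} yields
$$\opord m\ordsum\opord k\leq\opord m\leq\opord m\ssum\opord k.$$

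The key observation is now purely about ordinals: $\opord m$ is an initial segment of $\opord m\ordsum\opord k$, so $\opord m\leq\opord m\ordsum\opord k$, with equality \iff\ $\opord k=0$. Combined with the displayed inequality $\opord m\ordsum\opord k\leq\opord m$, this forces $\opord k=0$, hence $K=0$, so $f$ is injective, whence an isomorphism.

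There is no real obstacle: the whole content is the lower bound in \eqref{eq:lensemadd}, which transfers the classical ``chain-of-submodules'' heuristic (a proper kernel would produce an extra initial chunk of length pushing $\len M$ strictly up) into an ordinal identity. It is worth noting why the upper bound $\opord m\leq\opord m\ssum\opord k$ is not enough on its own: since shuffle sum only combines Cantor coefficients, $\opord m\ssum\opord k$ may equal $\opord m$ even when $\opord k\neq 0$ (indeed, this is exactly what happens for open submodules by \Cor{C:openval}). It is the non-commutative ordinal sum $\ordsum$, which strictly increases when a nonzero ordinal is appended on the right, that makes the argument work. This is also the conceptual reason why Vasconcelos' result holds in this abstract setting, and why the generalization in \Thm{T:endosupim} (non-expansion) will likewise follow by the same ordinal-arithmetic mechanism applied to the restriction of $f$.
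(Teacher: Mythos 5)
Your proof is correct and is essentially the paper's own argument: form the exact sequence $\Exactseq KMM$, apply the lower bound of semi-additivity, and observe that $\len M\ordsum\len K\leq\len M$ forces $\len K=0$ since right-appending a nonzero ordinal strictly increases an ordinal sum. The additional remarks on why the upper bound alone would not suffice are accurate but not needed.
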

\begin{proof}
Let $M\to M$ be a surjective endomorphism with kernel  $N$, so that we have an exact sequence $\Exactseq NMM$, and therefore, by \Thm{T:semadd}, an inequality $\len M\ordsum \len N\leq \len M$. By simple ordinal arithmetic, this implies $\len N=0$, whence $N=0$.
\end{proof}

\begin{remark}\label{R:Vasc}
Our argument in fact proves that any surjection between modules of the same length must be an isomorphism, or more generally, if $f\colon M\to N$ is an epimorphism and $\len  M\leq \len N$, then $f$ is an isomorphism and $\len M=\len N$. More generally, endomorphisms are non-expansive in the following sense (note that by taking $N=M$, we  recover  Vasconcelos' result):
\end{remark}

\begin{theorem}\label{T:endosupim}
Let $f$ be an endomorphism on a Noetherian module $M$ and let $N$ be a submodule such that $N\sub f(N)$. Then $f(N)= N$ and the restriction $\restrict fN$ is an automorphism of $N$.
\end{theorem}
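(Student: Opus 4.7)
The plan is to apply \Thm{T:semadd} to two short exact sequences built from $f$ and from the inclusion $N\sub f(N)$: (i) the sequence $\Exactseq K N{f(N)}$, where $K:=\ker(\restrict fN)$, and (ii) the sequence $\Exactseq N{f(N)}{f(N)/N}$. Together they will trap the length of $f(N)/N$ on the wrong side of $\len N$ and force $f(N)=N$; once this is known, Vasconcelos (\Cor{C:Vasc}) finishes the automorphism claim.

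\textbf{Key computation.} Semi-additivity applied to (i) yields
\[
\len{f(N)}\ \leq\ \len{f(N)}\ordsum \len K\ \leq\ \len N,
\]
the first inequality holding because ordinal sum dominates its left summand. Applied to (ii), it yields
\[
\len{f(N)/N}\ordsum \len N\ \leq\ \len{f(N)}.
\]
Chaining the two gives $\len{f(N)/N}\ordsum \len N \leq \len N$. Because $\opord a\ordsum \opord b > \opord a$ whenever $\opord b>0$ (ordinal sum is strictly monotone in its right argument), this forces $\len{f(N)/N}=0$, hence $f(N)/N=0$, that is to say, $f(N)=N$.

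\textbf{Automorphism and main obstacle.} With $f(N)=N$ in hand, the restriction $\restrict fN\colon N\to N$ is a surjective endomorphism of the Noetherian module $N$, so \Cor{C:Vasc} gives that it is in fact an automorphism. There is no real obstacle in this proof; the argument reduces to a three-line ordinal calculation. The one subtlety worth noting is the role of the \emph{non-commutativity} of $\ordsum$: it is precisely the asymmetry between the two applications of semi-additivity (cokernel of $\restrict fN$ on one side, quotient by $N$ on the other) that makes the bounds conspire to collapse $\len{f(N)/N}$ to zero.
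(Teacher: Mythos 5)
Your two applications of semi-additivity are correct and they do yield $\len{f(N)/N}\ordsum\len N\leq\len{f(N)}\leq\len N$, but the final deduction is where the argument breaks. From $\opord q\ordsum\opord n\leq\opord n$ one cannot conclude $\opord q=0$. The monotonicity fact you invoke, $\opord a\ordsum\opord b>\opord a$ for $\opord b>0$, says the sum strictly dominates its \emph{left} summand when the right one is positive; what your step actually requires is $\opord q\ordsum\opord n>\opord n$ whenever $\opord q>0$, i.e.\ strict monotonicity in the \emph{left} argument, and that is false: $1\ordsum\omega=\omega$. This is precisely the ``gobbling up'' phenomenon the paper emphasizes, and the reason additivity fails for $\exactseq\zet2\zet{}{\zet/2\zet}$. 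In \Cor{C:Vasc} the analogous collapse does work, because there the module to be killed is the kernel, which occupies the right-hand slot of $\len M\ordsum\len N\leq\len M$; in your sequence (ii) the module to be killed, $f(N)/N$, is the quotient and occupies the left-hand slot, where it can be absorbed. Your inequalities therefore only force $\len{f(N)/N}\ordsum\len N=\len N$, which (when $f(N)/N\neq0$) gives merely $\deg\len{f(N)/N}<\deg\len N$, i.e.\ $\dim(f(N)/N)<\dim N$ by \Thm{T:dim} --- not $f(N)=N$.

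What your computation does establish is $\len N=\len{f(N)}$, and hence, by \eqref{eq:quot} (or \Rem{R:Vasc}), that the surjection $\restrict fN\colon N\to f(N)$ is an isomorphism. This is exactly as far as the paper's own proof gets by length considerations. The equality $f(N)=N$ then requires a separate, non-ordinal argument: assuming $N\varsubsetneq f(N)$, one iterates the isomorphism statement to see that $f$ is injective on each $f^k(N)$, picks $a\in N$ with $f(a)\notin N$, and checks that $f^n(a)\notin f^{n-1}(N)$ for all $n$, so that $N\varsubsetneq f(N)\varsubsetneq f^2(N)\varsubsetneq\cdots$ is a strictly ascending chain, contradicting Noetherianity. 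Some such supplement is unavoidable: ordinal arithmetic alone cannot exclude a nonzero quotient $f(N)/N$ of low degree. To repair your proof, keep your key computation, conclude only that $\restrict fN\colon N\to f(N)$ is an isomorphism, and then add the ascending-chain argument.
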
 
\begin{proof}
Since $N\sub f(N)$, we have $\len N\leq\len{f(N)}$. On the other hand, $f$ induces a surjection $N\to f(N)$, showing that $\len {f(N)}\leq \len N$. Hence $\len N=\len {f(N)}$, implying by \eqref{eq:quot} that the surjection $N\to f(N)$ is an isomorphism. Now, from $N\sub f(N)$, we get $f(N)\sub f^2(N)$, and applying the same argument to the submodule $f(N)$, shows that $f$ is injective on $f(N)$. Repeating in this way, we see that $f$ is injective on each $f^n(N)$. Suppose now that the inclusion $N\sub f(N)$ is strict, so that we can find $a\in N$ with $f(a)\notin N$. Suppose for some $n$, we have  $f^n(a)\in f^{n-1}(N)$, say, $f^n(a)=f^{n-1}(b)$ with $b\in N$. Since $f$ is injective on each $f^k(N)$, we get $b=f(a)$, contradiction. Hence the  chain $N\varsubsetneq f(N)\varsubsetneq f^2(N)\varsubsetneq\dots$ is  strictly ascending, contradicting Noetherianity. In conclusion, $N=f(N)$ and the assertion follows.
\end{proof}

\begin{corollary}\label{C:subim}
If $N$ is a homomorphic image of $M$ which contains a submodule isomorphic to $M$, then $M\iso N$.
\end{corollary}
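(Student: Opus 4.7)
The plan is to leverage the length machinery (specifically \Thm{T:submod} and \Thm{T:semadd}) to show that the given surjection $M \twoheadrightarrow N$ must automatically be injective, essentially reducing the statement to the Vasconcelos-style argument from \Cor{C:Vasc} (or equivalently \Rem{R:Vasc}).

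Concretely, denote the given surjection by $\pi\colon M\twoheadrightarrow N$ with kernel $K$, and the given embedding by $\iota\colon M\into N$. First, since $\iota$ realizes $M$ as a submodule of $N$, \Thm{T:submod} yields $\len M\aleq\len N$, so in particular $\len M\leq\len N$ as ordinals. Second, applying the lower estimate of semi-additivity in \Thm{T:semadd} to the exact sequence $\Exactseq K M N$, we obtain
\begin{equation*}
\len N\ordsum\len K\leq\len M.
\end{equation*}
Combining these two inequalities gives $\len N\ordsum\len K\leq\len N$. Since ordinal sum is strictly increasing in its right argument (i.e., $\opord a\ordsum\opord b>\opord a$ whenever $\opord b\neq 0$), this forces $\len K=0$, hence $K=0$. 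Therefore $\pi$ is injective as well, and thus an isomorphism $M\iso N$.

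I do not expect any serious obstacle: the main point is simply that the embedding hypothesis provides exactly the length inequality $\len M\leq\len N$ needed to convert the Vasconcelos argument (which handles the endomorphism case) into a statement about two a priori distinct modules. Essentially the same proof appears implicit in \Rem{R:Vasc}, which already records that an epimorphism $f\colon M\to N$ with $\len M\leq\len N$ is automatically an isomorphism; our task is merely to supply that length inequality from the embedding hypothesis via \Thm{T:submod}.
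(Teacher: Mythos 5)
Your proposal is correct and follows essentially the same route as the paper: the paper also derives $\len M\leq\len N$ from the embedding (via semi-additivity rather than \Thm{T:submod}, an immaterial difference) and then invokes \Rem{R:Vasc}, whose content is exactly the Vasconcelos-style ordinal computation you spell out with the kernel $K$. No gaps.
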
 
\begin{proof}
  Since $M\into N$, semi-additivity yields $\len M\leq\len N$. By \Rem{R:Vasc},  the epimorphism $M\onto N$ must then be an isomorphism.
\end{proof}

The following result generalizes Miyata's result \cite{Miy} as we do not need to assume that the given sequence is left exact.

\begin{theorem}\label{T:miyata}
An exact sequence $M\to N\to C\to 0$ is split exact \iff\   the (abstract) modules  $N$ and $ M\oplus C$ are isomorphic.
\end{theorem}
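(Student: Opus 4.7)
The plan splits into two parts.  The ``only if'' direction is immediate from the definition of a split short exact sequence.  For the ``if'' direction, assume $M\xrightarrow{f} N\xrightarrow{g} C\to 0$ is exact and fix an abstract isomorphism $\phi\colon M\oplus C\xrightarrow{\cong} N$; I would argue in two steps.

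\emph{Step 1: $f$ is injective.}  Let $K:=\ker f$ and $M':=f(M)$, so we have a short exact sequence $0\to M'\to N\to C\to 0$.  Semi-additivity (\Thm{T:semadd}) gives $\lenmod RN \leq \lenmod R{M'}\ssum \lenmod RC$, while the isomorphism $N\cong M\oplus C$ (and the fact that split sequences achieve equality in the upper semi-additivity bound) gives $\lenmod RN = \lenmod RM\ssum \lenmod RC$.  Combining the two and cancelling the common $\lenmod RC$ componentwise in the Cantor expansions yields $\lenmod RM\aleq \lenmod R{M'}$.  Since $M\twoheadrightarrow M'$, \Thm{T:submod} provides the reverse inequality $\lenmod R{M'}\aleq\lenmod RM$, so $\lenmod RM = \lenmod R{M'}$; then \Rem{R:Vasc}---a surjection between modules of the same length is an isomorphism---identifies $M$ with $M'$, forcing $K=0$.

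\emph{Step 2: The short exact sequence $0\to M\to N\to C\to 0$ splits.}  Set $s:=\phi\circ i_C\colon C\to N$ and consider the endomorphism $\alpha:=g\circ s$ of $C$.  If $\alpha$ is surjective, then by \Cor{C:Vasc} it is an automorphism, and $s\circ\alpha^{-1}$ is a section of $g$.  So the task reduces to proving $\alpha$ is surjective.  A short diagram chase (using $\tilde g\circ\tilde f=0$ and $\tilde f(M)=\ker\tilde g$, where $\tilde f:=\phi^{-1}f$ and $\tilde g:=g\phi$) shows that this is equivalent to proving $u:=p_M\circ\phi^{-1}\circ f\colon M\to M$ is surjective: the map $\beta:=g\circ\phi\circ i_M$ induces an isomorphism $\bar\beta\colon M/u(M)\xrightarrow{\cong} C/\alpha(C)$ on the relevant quotients.

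The main obstacle is proving this surjectivity.  The plan is to combine \Thm{T:endosupim} (non-expansion) with the global length identity $\lenmod RN=\lenmod RM\ssum \lenmod RC$: if $u(M)\subsetneq M$, then the proper submodule $f(M)+s(C)\subsetneq N$ corresponds under $\phi^{-1}$ to $u(M)\oplus C$ and has quotient isomorphic to the nonzero module $R:=M/u(M)\cong C/\alpha(C)$; semi-additivity applied to $0\to f(M)+s(C)\to N\to R\to 0$, coupled with the analogous bound on $f(M)\cap s(C)\cong\ker u$, imposes tight constraints on the Cantor coefficients of $\lenmod RM$ and $\lenmod RC$.  Running these constraints against the non-expansion principle applied to the iterates of $u$ (whose kernel chain stabilizes by Noetherianity) should force a contradiction, yielding $u(M)=M$ and hence the splitting.
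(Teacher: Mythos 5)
Your Step~1 is correct and is exactly the paper's argument: semi-additivity applied to $0\to f(M)\to N\to C\to 0$ gives $\len N\leq \len{f(M)}\ssum\len C$, the isomorphism $N\iso M\oplus C$ gives $\len N=\len M\ssum\len C$, cancellation yields $\len M\leq\len{f(M)}$, and \Rem{R:Vasc} then identifies $M$ with $f(M)$. (A small caveat: from an inequality $\leq$ of shuffle sums you may cancel to get $\leq$, not $\aleq$ ``componentwise''; but $\leq$ is all you need here.)

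Step~2, however, contains a genuine gap: the reduction to surjectivity of $\alpha=g\circ\phi\circ i_C$ (equivalently of $u=p_M\circ\phi^{-1}\circ f$) is a dead end, because for an arbitrary abstract isomorphism $\phi$ these maps need not be surjective even when the sequence \emph{does} split. Take $M=C=R$, $N=R^2$, $f(m)=(m,0)$, $g(a,b)=b$ (a split sequence), and $\phi(m,c)=(c,m)$. Then $\phi\circ i_C(c)=(c,0)$ lands in $\ker g$, so $\alpha=0$, and likewise $u=0$, so $u(M)=0\varsubsetneq M$ while $M/u(M)\iso C/\alpha(C)\iso R$. Thus the configuration you propose to rule out by contradiction ($u(M)\varsubsetneq M$, with $f(M)+s(C)$ a proper submodule of $N$ having nonzero quotient) is perfectly consistent with all your hypotheses, and no amount of length bookkeeping or appeal to \Thm{T:endosupim} can refute it. Your $\bar\beta\colon M/u(M)\to C/\alpha(C)$ is indeed an isomorphism, but that only shows $\alpha$ surjective \emph{iff} $u$ surjective; neither is equivalent to splitness. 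The concluding paragraph (``should force a contradiction'') is in any case a hope rather than an argument. The paper takes a different route for this step: for every finitely generated $H$, right-exactness of $-\tensor H$ and the isomorphism $N\tensor H\iso(M\tensor H)\oplus(C\tensor H)$ allow Step~1 to be repeated verbatim, giving injectivity of $M\tensor H\to N\tensor H$; hence $M\to N$ is pure, and a pure submodule with finitely presented quotient is a direct summand by \cite[Theorem 7.14]{Mats}. Replacing your Step~2 by this tensoring argument would complete the proof.
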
 
\begin{proof} One direction is just the definition of split exact. 
Let $\bar M$ be the image of $M$ and apply \Thm{T:semadd} to  $\Exactseq {\bar M}NC$ to get $\len N\leq\len C\ssum\len {\bar M}$. On the other hand, $N\iso M\oplus C$ yields $\len N=\len M\ssum\len C$, whence $\len M\leq\len {\bar M}$. Since $\bar M$ is a homomorphic image of $M$, they must be isomorphic by \Rem{R:Vasc}. Hence, we showed $M\to N$ is injective.   At this point we could invoke  \cite{Miy}, but we can as easily give a direct proof of splitness as follows.  Given a finitely generated $R$-module $H$,  since $M\tensor H\iso (N\tensor H)\oplus (C\tensor H)$,   the same argument  applied to the tensored exact sequence 
$$
M\tensor H\to  N\tensor H\to C\tensor H\to 0,
$$
  gives the injectivity of  the first arrow. We therefore showed that $M\to N$ is pure, whence split by \cite[Theorem 7.14]{Mats}. 
\end{proof} 

\begin{remark}\label{R:striuli}
Independently,  Striuli  proved the same result in \cite{StrExt}. However, as she only proves exactness in the local case, she still needs to invoke Miyata's original result, whereas our proof stands alone.
\end{remark}

\begin{theorem}\label{T:nonsingsub}
Let $X$ be a non-singular variety over an \acf\ $k$. Then a closed subscheme $Y\sub X$ with ideal of definition $\mathcal I$ is non-singular \iff\ $\Omega_{X/k}\tensor\loc_Y$ is locally isomorphic to $\mathcal I/\mathcal I^2\oplus \Omega_{Y/k} $.
\end{theorem}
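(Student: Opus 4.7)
The plan is to work locally at a point $y\in Y$. Set $A=\loc_{X,y}$ with maximal ideal $\maxim$ and $B=\loc_{Y,y}=A/I$ with maximal ideal $\mathfrak n=\maxim/I$, where $I=\mathcal I_y$. Since $X$ is non-singular and $k$ is algebraically closed, $A$ is a regular local $k$-algebra of dimension $d:=\dim X$, and $\Omega_{A/k}$ is free of rank $d$, hence $\Omega_{A/k}\tensor_A B$ is free of rank $d$ over $B$. The proof rests on the (always right exact) conormal sequence
\begin{equation*}
I/I^2\xrightarrow{\phantom{i}\delta\phantom{i}} \Omega_{A/k}\tensor_A B\xrightarrow{\phantom{i}\phantom{i}} \Omega_{B/k}\to 0.
\end{equation*}

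For the direct implication, if $B$ is regular then $I$ is generated by part of a regular system of parameters for $A$, so $\delta$ is split-injective with free cokernel $\Omega_{B/k}$; this yields the asserted local splitting in the standard way.

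For the converse, which is the substantive direction, I would argue as follows. Assume $\Omega_{A/k}\tensor B\iso I/I^2\ssum \Omega_{B/k}$ as $B$-modules. Applying \Thm{T:miyata} to the conormal sequence then gives that it is \emph{split} exact; in particular $\delta$ is injective and both $I/I^2$ and $\Omega_{B/k}$ are direct summands of a free $B$-module of rank $d$, whence they are themselves free of ranks $r$ and $s$, respectively, with $r+s=d$. Setting $\overline{(\cdot)}=(\cdot)\tensor_B k$, tensoring the splitting with $k$ yields $\dim_k(\mathfrak n/\mathfrak n^2)=\dim_k\overline{\Omega_{B/k}}=s$, since $k$ is the residue field. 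On the other hand, because $I\sub\maxim$ we have $I^2\sub \maxim I$, so $I/\maxim I\iso (I/I^2)\tensor_B k$ has dimension $r$; by Nakayama, $I$ is minimally generated by $r$ elements, and Krull's height theorem gives $\dim B\geq d-r=s$. Combined with the standard inequality $\dim B\leq \dim_k(\mathfrak n/\mathfrak n^2)=s$, this forces $\dim B=s=\dim_k(\mathfrak n/\mathfrak n^2)$, so $B$ is regular.

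The only non-formal ingredient is the use of \Thm{T:miyata} to upgrade an \emph{abstract} local isomorphism $\Omega_{X/k}\tensor\loc_Y\iso \mathcal I/\mathcal I^2\ssum\Omega_{Y/k}$ to actual splitness of the conormal sequence; once that is in hand, the rest is a rank count and a comparison of the embedding dimension with the Krull dimension. This step is where the hypothesis that $X$ is non-singular (so $\Omega_{X/k}\tensor\loc_Y$ is free, making all the relevant modules finitely generated over a Noetherian local ring) is essential, and it is also the only point where one really needs more than the right-exact conormal sequence available in full generality.
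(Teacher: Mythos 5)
Your proof is correct and takes essentially the same route as the paper: the key step in both is the application of \Thm{T:miyata} to the right-exact conormal sequence, upgrading the abstract local isomorphism to an actual splitting (and in particular to left exactness). The only difference is that where the paper then simply invokes \cite[Theorem 8.17]{Hart}, you re-derive that criterion by hand—freeness of the summands, the identification $\dim_k(\mathfrak n/\mathfrak n^2)=s$, and Krull's height theorem giving $\dim B\geq d-r=s$—which is a correct, self-contained unwinding of the same citation.
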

\begin{proof}
Since $X$ is non-singular, its module of differentials $\Omega_{X/k}$ is locally free (\cite[Theorem 8.15]{Hart}), whence so is $\Omega_{X/k}\tensor\loc_Y$, and therefore so is its direct summand $\Omega_{Y/k}$. Moreover, by \Thm{T:miyata}, the conormal sequence
$$
\mathcal I/\mathcal I^2\to \Omega_{X/k}\tensor\loc_Y\to \Omega_{Y/k}\to 0
$$
 is then split exact, and the result now follows from \cite[Theorem 8.17]{Hart}.
\end{proof}  

\begin{theorem}\label{T:retract}
Let $A$ be a finitely generated $R$-algebra,   $I\sub A$ an ideal, and $\bar A:=A/I$.  The closed immersion $\op{Spec}\bar A\sub \op{Spec}(A/I^2)$ is a retract over $R$ \iff\ we have an (abstract) isomorphism of $A$-modules
\begin{equation}\label{eq:condiff}
\Omega_{A/R}\big/I\Omega_{A/R}\iso \Omega_{\bar A/R}\oplus I/I^2.
\end{equation} 
\end{theorem}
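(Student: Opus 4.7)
The plan is to combine the standard conormal exact sequence with Miyata's theorem (\Thm{T:miyata}). For the surjection $A\onto\bar A$ with kernel $I$, the (first) conormal sequence
\begin{equation*}
I/I^2 \xrightarrow{d} \Omega_{A/R}/I\Omega_{A/R} \to \Omega_{\bar A/R} \to 0
\end{equation*}
is always right exact, so the abstract $A$-module isomorphism in \eqref{eq:condiff} is precisely the hypothesis of \Thm{T:miyata} applied to this three-term sequence with $M=I/I^2$, $N=\Omega_{A/R}/I\Omega_{A/R}$, and $C=\Omega_{\bar A/R}$. The first step of the proof is therefore to observe that, under this identification, \eqref{eq:condiff} is equivalent to the conormal sequence being \emph{split exact}, that is, to $d$ being injective and admitting an $A$-module retraction.

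The second step is the classical correspondence between $R$-algebra sections of $A/I^2\onto\bar A$ and module-theoretic splittings of the conormal sequence (see, e.g., \cite[II.8]{Hart}). In one direction, a retract $\sigma\colon \bar A\to A/I^2$ of the canonical surjection trivializes the square-zero extension $\Exactseq{I/I^2}{A/I^2}{\bar A}$ as an $R$-algebra extension, and functoriality of K\"ahler differentials applied to $\sigma$ immediately produces the desired splitting of the conormal sequence. In the other direction, given a module retraction $r$ of $d$, the composite
\begin{equation*}
\delta\colon A \xrightarrow{d_{A/R}} \Omega_{A/R} \to \Omega_{A/R}/I\Omega_{A/R} \xrightarrow{r} I/I^2
\end{equation*}
is an $R$-derivation annihilating $I^2$, and the formula $a\mapsto a-\delta(a)\pmod{I^2}$ defines an $R$-algebra section $\bar A\to A/I^2$.

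Combining the two steps yields the biconditional: a retract produces a split conormal sequence and hence \eqref{eq:condiff} (semi-additivity, or even direct construction, does the rest); conversely, \eqref{eq:condiff} is promoted by \Thm{T:miyata} to genuine split exactness of the conormal sequence, and the correspondence above extracts the retract. The main obstacle is not any delicate estimate but the classical correspondence between $R$-algebra retracts and conormal splittings invoked in the second step; this is folklore and entirely independent of the ordinal-length machinery, but must be spelled out carefully. The novel content of the argument (exactly paralleling the pattern of \Thm{T:nonsingsub}) is the use of \Thm{T:miyata} to upgrade the merely abstract $A$-module isomorphism \eqref{eq:condiff} into an honest split exact sequence compatible with the structural maps.
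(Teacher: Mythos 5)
Your proposal is correct and follows essentially the same route as the paper: the key step in both is to use \Thm{T:miyata} to upgrade the abstract isomorphism \eqref{eq:condiff} into a genuine splitting of the conormal sequence, after which the classical correspondence between $R$-algebra retracts of $A/I^2\onto\bar A$ and module splittings of that sequence (which the paper simply cites as \cite[Proposition 16.12]{Eis}, and which you spell out) finishes the argument.
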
 
\begin{proof}
One direction is easy, and if \eqref{eq:condiff} holds, then the conormal sequence
$$
  I/  I^2\to \Omega_{A/R}\big/I\Omega_{A/R}\to \Omega_{\bar A/R}\to 0
$$
is split exact by \Thm{T:miyata}, so that the result follows from \cite[Proposition 16.12]{Eis}.
\end{proof} 

 Given a module $M$ and an element $f\in R$, we say that $f$ is a \emph{parameter} on $M$, if $M/fM$ is non-zero but has dimension strictly less than $M$. For a local ring $(R,\maxim)$, an element $f\in\maxim$ is a parameter \iff\ $\dim{R/fR}=\dim R-1$, but this does not necessarily hold in non-local rings, like $\pol{\pow kx}y$ with $f=xy-1$.

\begin{theorem}\label{T:lenpar}
 An element $f\in R$ is a parameter on $M$ \iff\ the dimension of $\ann Mf$ as a module is  less than  that of $  M$.
\end{theorem}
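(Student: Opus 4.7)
The plan is to reduce the question to a local computation at each top-dimensional associated prime of $M$, where multiplication by $f$ acts on a finite-length module and Vasconcelos' dichotomy (surjective iff injective, \Cor{C:Vasc}) applies. Set $d:=\dim M$. By \Thm{T:dim} and \Thm{T:cohrk}, for any subquotient $N$ of $M$ (so $\dim N\leq d$) the condition $\dim N<d$ is equivalent to $\mul Nd=0$, and $\mul Nd=\sum_\pr \mul N\pr$ where the sum ranges over the $d$-dimensional associated primes of $M$: indeed any $d$-dimensional prime in $\op{Supp}(N)\subseteq\op{Supp}(M)$ is automatically minimal in $\op{Supp}(M)$, hence is a $d$-dimensional associated prime of $M$ itself.

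Next, fix such a $d$-dimensional associated prime $\pr$ of $M$. Minimality in $\op{Supp}(M)$ forces $M_\pr$ to have finite length over $R_\pr$ (its support is the single point $\pr R_\pr$). Multiplication by $f$ is then an endomorphism of the finite-length module $M_\pr$, with kernel $(\ann M f)_\pr$ and cokernel $(M/fM)_\pr$. By \Cor{C:Vasc}, this endomorphism is surjective iff injective, so ordinary (finite) additivity of length applied to the four-term sequence
$$0\to (\ann M f)_\pr\to M_\pr\xrightarrow{f} M_\pr\to (M/fM)_\pr\to 0$$
yields $\mul{\ann M f}\pr = \mul{M/fM}\pr$.

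Summing this identity over all $d$-dimensional associated primes of $M$ gives $\mul{\ann M f}d=\mul{M/fM}d$, and by the first paragraph this translates into the equivalence
$$\dim\ann M f<d\quad\Longleftrightarrow\quad \dim M/fM<d,$$
which is the desired parameter criterion. The degenerate case $M/fM=0$, in which $f$ is by definition not a parameter, is consistent: by Vasconcelos the surjection $M\xrightarrow{f}M$ is then an isomorphism, so $\ann M f=0$ and $\dim\ann M f=-\infty$, and one handles this case by declaring the parameter hypothesis $M/fM\neq 0$ separately or by using the convention $\dim 0<d$.

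The main obstacle is essentially bookkeeping: making sure the relevant $d$-dimensional primes for $\ann M f$ and $M/fM$ are exactly those of $M$, so that the local identity $\mul{\ann M f}\pr=\mul{M/fM}\pr$ at each such prime assembles into the global identity of Cantor coefficients. The support inclusions (submodule/quotient of $M$) together with \Thm{T:cohrk} handle this cleanly, and the rest is the finite-length Vasconcelos argument on each $M_\pr$.
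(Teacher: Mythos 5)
Your proof is correct, but it follows a genuinely different route from the paper's. The paper first replaces $R$ by $R/\ann{}M$ so that $\dim R=\dim M=d$, and then applies top additivity (\Cor{C:topadd}) twice: to $\Exactseq{fM}M{M/fM}$, giving that $\dim{M/fM}<d$ \iff\ $\mul Md=\mul{fM}d$, and to $\Exactseq{\ann Mf}M{fM}$, giving that the latter holds \iff\ $\mul{\ann Mf}d=0$. You instead localize at each $d$-dimensional associated prime $\pr$ of $M$ (correctly checking, via $\op{Supp}(N)\sub\op{Supp}(M)$ and minimality, that these are the only $d$-dimensional primes that can carry a nonzero local multiplicity for any subquotient $N$ of $M$), and run the ordinary finite-length additivity argument on the four-term sequence $0\to(\ann Mf)_\pr\to M_\pr\to M_\pr\to(M/fM)_\pr\to0$; note that the appeal to \Cor{C:Vasc} is superfluous there, since additivity on the four-term sequence already forces $\len{(\ann Mf)_\pr}=\len{(M/fM)_\pr}$. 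Both arguments reduce to the same identity $\mul{\ann Mf}d=\mul{M/fM}d$: the paper obtains it from the ordinal semi-additivity machinery, which is shorter, while you in effect reprove the needed instance of top additivity locally via \Thm{T:cohrk}, which is more self-contained at the price of some bookkeeping. Your explicit remark on the degenerate case $fM=M$ (excluded by the definition of parameter but not by the displayed equivalence) addresses an edge case the paper's proof passes over silently.
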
 
Recall that $\ann Mf$ is the submodule of all $y\in M$ such that $fy=0$.
\begin{proof}
Let $d:=\dim M$. Replacing $R$ by $R/\ann {}M$, we may assume $d=\dim R$. Put $\bar M:=M/fM$ and consider the exact sequence $\Exactseq {fM}M{\bar M}$. It follows from top additivity  (\Cor{C:topadd}) that $\dim{\bar M}<d$, \iff\ $M$ and $fM$ have the same generic length, that is to say, 
\begin{equation}\label{eq:mulfM}
\mul Md=\mul{fM}d.
\end{equation} 
 Applying top-additivity instead to the exact sequence $\Exactseq{\ann{M}f}M{fM}$, we see  that \eqref{eq:mulfM} in turn  is equivalent with   $\mul{\ann{M}f}d=0$, meaning that   $\ann{M}f$ has  dimension at most $d-1$.
\end{proof}

\section{Application II: Degradation}\label{s:degrad}

By \emph{degradation}, we mean the effect that source and target of a morphism have on its kernel. 
We start with a general observation about kernels: given two $R$-modules $M$ and $N$, let us denote the subset of $\grass RM$ consisting of all $\op{ker}(f)$, where $f\in\hom RMN$ runs over all morphisms, by $\mathfrak{ker}_R(M,N)$.

\begin{theorem}\label{T:kerlen}
As a subset of $\grass RM$, the ordered set $\mathfrak{ker}_R(M,N)$ has finite length.
\end{theorem}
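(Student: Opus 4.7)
The plan is to construct a strictly increasing map from $\mathfrak{ker}_R(M,N)$, equipped with the reverse-inclusion order inherited from $\grass RM$, into a finite totally ordered set, and then invoke \Thm{T:inc}.

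Consider the map $\phi(K):=\lenmod R{M/K}$. Whenever $K=\ker f$, the isomorphism $M/K\iso\op{im}(f)$ exhibits $M/K$ as a submodule of $N$, whence $\phi(K)\aleq\lenmod RN$ by \Thm{T:submod}. Thus $\phi$ takes values in the set $S:=\{\opord a:\opord a\aleq\lenmod RN\}$, which is a \emph{finite} set: writing the Cantor normal form $\lenmod RN=\Splus_i b_i\omega^i$, an ordinal $\opord a$ is weaker than $\lenmod RN$ precisely when its Cantor coefficients satisfy $\mul{\opord a}i\leq b_i$ for each $i$, so $|S|=\prod_i(b_i+1)$. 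Equipped with the usual ordinal order $\leq$, the set $S$ is a finite chain, of finite length.

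To see that $\phi$ is strictly increasing into $(S,\leq)$, suppose $K_1<K_2$ in $\mathfrak{ker}_R(M,N)$; this means $K_2\subsetneq K_1$ as submodules of $M$, so that the short exact sequence
$$\Exactseq{K_1/K_2}{M/K_2}{M/K_1}$$
combined with the lower bound of \Thm{T:semadd} gives
$$\lenmod R{M/K_1}\ordsum\lenmod R{K_1/K_2}\leq\lenmod R{M/K_2},$$
and since $\lenmod R{K_1/K_2}>0$, this forces $\phi(K_1)<\phi(K_2)$ in the ordinal order. Since the zero morphism has kernel $M$, the whole module $M$ is the minimum of $\mathfrak{ker}_R(M,N)$, so \Thm{T:inc} applies and yields $\len{\mathfrak{ker}_R(M,N)}\leq\len S<\omega$.

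The main conceptual point is the interplay between two orderings on ordinals. Strictness of $\phi$ in the natural order $\leq$ is what \Thm{T:inc} demands, while the finiteness of the target requires the finer weaker-than relation $\aleq$: the image of $\phi$ is bounded against $\lenmod RN$ in both orders, and both bounds are essential.
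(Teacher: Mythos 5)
Your proof is correct and follows essentially the same route as the paper: both arguments rest on the observation that $\lenmod R{M/\ker f}=\lenmod R{\op{Im}(f)}\aleq\lenmod RN$, that only finitely many ordinals are weaker than $\lenmod RN$, and that $K\mapsto\lenmod R{M/K}$ is strictly monotone on $\grass RM$. (The only cosmetic difference is that the paper gets strict monotonicity for free from $\lenmod R{M/K}=\hdmod{\grass RM}K$ and bounds chain lengths directly, whereas you re-derive it via semi-additivity and package the conclusion through \Thm{T:inc}.)
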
 
\begin{proof}
Let $f\colon M\to N$ be a morphism, and let  ${\opord q}$ be the  length of its   image.   By \Thm{T:submod}, we have  ${\opord q}\aleq{}\len N$. In particular, there are at most  $2^{\val N}$ possibilities for  ${\opord q}$. I claim that if $g\colon M\to N$ is a second morphism and $\op{ker}(g)\varsubsetneq \op{ker}(f)$, then $\len{\op{Im}(g)}$ is strictly bigger than ${\opord q}$. From this claim it then follows that any chain in $\mathfrak{ker}_R(M,N)$ has length at most $2^{\val N}$. To prove the claim, we have $\hd{\op{ker}(g)}=\len{N/\op{ker}(g)}=\len{\op{Im}(g)}$, by \eqref{eq:quot}. By assumption, $\op{ker}(g)$ is strictly contained in $\op{ker}(f)$, and hence it has strictly bigger \hdim, showing the claim.
\end{proof} 


\begin{corollary}\label{C:uniker}
If $N$ is univalent (i.e., $\val N=1$), then there are no inclusion relations among the kernels of non-zero morphisms $M\to N$. In particular, for each $K\sub M$, the set $H_K$ of morphisms $M\to N$ with kernel equal to $K$ together with the zero morphism, is a submodule of $\hom RMN$.
\end{corollary}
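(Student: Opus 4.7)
The plan is to reduce the statement to the core argument already used in the proof of \Thm{T:kerlen}. First I would observe that $\val N=1$ forces, via Cantor normal form, $\len N=\omega^e$ for some $e\geq 0$. By \Thm{T:submod}, every submodule $K\sub N$ satisfies $\len K\aleq\omega^e$, and the only ordinals weaker than $\omega^e$ are $0$ and $\omega^e$ itself (since the sole non-zero Cantor coefficient of $\omega^e$ is a $1$ at position $e$). Consequently, every non-zero submodule of $N$ has length exactly $\omega^e$.

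For the antichain claim, I would argue by contradiction: suppose $f,g\colon M\to N$ are non-zero morphisms with $\op{ker}(g)\subsetneq\op{ker}(f)$. By the first isomorphism theorem and \eqref{eq:quot}, $\hd{\op{ker}(f)}=\lenmod R{M/\op{ker}(f)}=\len{\op{Im}(f)}$, and similarly for $g$. Since $f$ and $g$ are non-zero, their images are non-zero submodules of $N$, whence both have length $\omega^e$ by the previous step. On the other hand, strict inclusion $\op{ker}(g)\subsetneq\op{ker}(f)$ of submodules places $\op{ker}(g)$ strictly above $\op{ker}(f)$ in the poset $\gr M$ (ordered by reverse inclusion), and the definition of \hdim\ immediately yields $\hd{\op{ker}(g)}>\hd{\op{ker}(f)}$, contradicting the equality of image lengths just established.

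For the second assertion, the antichain property makes the verification routine. Given $f,g\in H_K$, we have $\op{ker}(f+g)\supseteq\op{ker}(f)\cap\op{ker}(g)=K$, so either $f+g=0\in H_K$, or $f+g$ is non-zero with kernel containing $K=\op{ker}(f)$; the antichain property applied to the pair $f,f+g$ then forces $\op{ker}(f+g)=K$. The same reasoning applied to $rf$ for $r\in R$ gives closure under $R$-scalars. The main subtlety I would watch out for is the reversed ordering convention on $\gr M$, which must be tracked carefully to see that a set-theoretically larger kernel corresponds to a strictly larger \hdim; beyond that I do not anticipate any real obstacle.
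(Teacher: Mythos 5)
Your proposal is correct and follows essentially the same route as the paper: it reduces to the key claim in the proof of Theorem~\ref{T:kerlen} (strict inclusion of kernels forces a strictly larger image length), combined with the observation that the only ordinals weaker than $\omega^e$ are $0$ and $\omega^e$, so all non-zero morphisms into a univalent $N$ have images, hence kernels, of the same rank. The paper simply cites the proof of Theorem~\ref{T:kerlen} for this dichotomy and checks closure of $H_K$ under $rf+sg$ in one step, which is what you do with $f+g$ and $rf$ separately.
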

\begin{proof}
By the  proof of \Thm{T:kerlen}, there are only two possibilities for the \hdim\ of a kernel, one of which is zero (given by the zero morphism). Assume $f,g\in H_K$. Since $K$ then lies in the kernel of $rf+sg$, for any $r,s\in R$, the latter kernel is either $K$ or $M$.
\end{proof}

For each $v$, let $\grass vM$ be the subset of the \sch\ $\gr M$ consisting of all submodules for which $M/N$ has valence at most $v$. The same argument shows that each $\grass vM$ has finite length: indeed, for $N\in\grass vM$, we have $\hd N\aleq{}v\omega^d\ssum v\omega^{d-1}\ssum \dots\ssum v$, where $d$ is the dimension of $M$, and therefore, we only have finitely many possibilities for $\hd N$. Note that the union of the $\grass vM$ is $\gr M$, so that the \sch\ can be written as a union of suborders of finite length.  
We can now list some examples of degradation:

\begin{corollary}\label{C:openker}
If $M$ and $N$ have no associated primes in common, then any morphism between them has open kernel.
\end{corollary}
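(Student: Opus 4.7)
The plan is to use the cohomological characterization of openness from \Cor{C:openval}, namely that $K \sub M$ is open \iff\ $\fcyc R K = \fcyc R M$, or equivalently, $\hlc{K_\pr}0{\pr R_\pr} = \hlc{M_\pr}0{\pr R_\pr}$ for every prime ideal $\pr$. So let $f\colon M\to N$ be a morphism, put $K:=\op{ker}(f)$, and we wish to compare $\fcyc R K$ with $\fcyc R M$ prime by prime.

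The first step is to transfer the hypothesis to the quotient $M/K$. Since $M/K$ is isomorphic to the image of $f$, which is a submodule of $N$, inclusion \eqref{eq:ass1} yields $\op{Ass}(M/K)\sub \op{Ass}(N)$. By assumption, $\op{Ass}(M)\cap\op{Ass}(N)=\emptyset$, and hence $\op{Ass}(M)\cap\op{Ass}(M/K)=\emptyset$.

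Next, I would run the local cohomology argument. Fix a prime $\pr$. If $\pr\notin\op{Ass}(M)$, then $\mul M\pr=0$, and since $K\sub M$, inclusion \eqref{eq:ass1} again gives $\mul K\pr=0$ as well. If instead $\pr\in\op{Ass}(M)$, then by the previous paragraph $\pr\notin\op{Ass}(M/K)$, so $\hlc{(M/K)_\pr}0{\pr R_\pr}=0$ (since the zero-th local cohomology at $\pr$ is nonzero precisely for the associated primes). Localizing the exact sequence $\Exactseq K M {M/K}$ at $\pr$ and applying the left exactness of local cohomology therefore yields an isomorphism $\hlc{K_\pr}0{\pr R_\pr}\iso \hlc{M_\pr}0{\pr R_\pr}$, whence $\mul K\pr=\mul M\pr$.

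Combining the two cases, $\fcyc R K=\fcyc R M$, and \Cor{C:openval} concludes that $K$ is open in $M$. There is no real obstacle here: the entire argument is a direct application of \eqref{eq:ass1}, the characterization of associated primes via local cohomology, and \Cor{C:openval}; the only mild point is remembering that we do \emph{not} need to invoke the exact sequence \eqref{eq:lc} from \Lem{L:loccohgen} (which would require $\pr$ to be minimal over $K$), because  left-exactness of local cohomology alone suffices once we know $\hlc{(M/K)_\pr}0{\pr R_\pr}=0$.
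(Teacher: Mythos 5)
Your proof is correct and follows essentially the same route as the paper: the paper notes that for $\pr\in\op{Ass}(M)$ one has $\hlc{N_\pr}0{\pr R_\pr}=0$, hence $\hlc{Q_\pr}0{\pr R_\pr}=0$ for the image $Q\sub N$, and then applies left exactness of local cohomology to $\Exactseq KMQ$ and \Cor{C:openval}, exactly as you do (you merely phrase the vanishing via $\op{Ass}(M/K)\sub\op{Ass}(N)$ rather than via the inclusion of local cohomology modules, and you additionally spell out the trivial case $\pr\notin\op{Ass}(M)$).
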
 
\begin{proof}
Let $f\colon M\to N$ be a morphism and let $K\sub M$ and $Q\sub N$ be its respective kernel and image, so that we have an exact sequence $\Exactseq KMQ$. For any associated prime $\pr$ of $M$, we have $\hlc{N_\pr}0{\pr R_\pr}=0$ whence also $\hlc{Q_\pr}0{\pr R_\pr}=0$. By left exactness of local cohomology,  $\hlc{K_\pr}0{\pr R_\pr}=\hlc{M_\pr}0{\pr R_\pr}$, and hence $K$ is open by \Cor{C:openval}.
\end{proof} 

\begin{corollary}\label{C:noembopen}
Let $M$ be a module without embedded primes. A submodule $U\sub M$ is open \iff\ every \homo\ $f\colon M\to M/U$ has open kernel.
\end{corollary}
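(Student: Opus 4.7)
The backward direction is essentially immediate: the canonical projection $\pi\colon M\to M/U$ is itself a \homo\ from $M$ to $M/U$, and its kernel is exactly $U$. So if by hypothesis every such \homo\ has open kernel, then $U=\op{ker}(\pi)$ is open.

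For the forward direction, the plan is to combine the reformulation of \Cor{C:openemb} for modules without embedded primes with \Cor{C:openker}. Recall from the paragraph following \Cor{C:openemb} that for $M$ without embedded primes, a submodule $N\sub M$ is open \iff\ $\op{Ass}(M)\cap\op{Ass}(M/N)=\emptyset$. Apply this to $N=U$: since $U$ is assumed open and $M$ has no embedded primes, we get $\op{Ass}(M)\cap\op{Ass}(M/U)=\emptyset$. Now \Cor{C:openker} says that whenever the source and target of a morphism share no associated primes, the kernel is open; applied to any $f\colon M\to M/U$, this yields that $\op{ker}(f)$ is open, as required.

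There is no real obstacle; the content of the corollary is essentially a direct translation between the two prior characterizations of openness (in terms of associated primes, and in terms of kernels), with the no-embedded-primes hypothesis being used only to upgrade the nowhere-dense support condition of \Cor{C:openemb} to the equality of associated primes needed for the hypothesis of \Cor{C:openker}.
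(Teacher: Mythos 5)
Your proof is correct and follows exactly the paper's own argument: the reverse direction via the canonical projection, and the forward direction by combining the associated-prime characterization of openness for modules without embedded primes (the remark after Corollary~\ref{C:openemb}) with Corollary~\ref{C:openker}. Nothing to add.
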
 
\begin{proof}
Sufficiency follows from the discussion following \Cor{C:openemb} and \Cor{C:openker}. As for necessity, this is immediate when applied to the canonical projection $M\to M/U$ whose kernel is precisely $U$.
\end{proof} 


Since endomorphism rings are in general non-commutative, the set of nilpotent elements is not necessarily an ideal, but we can always find a subcollection which is:

\begin{proposition}\label{P:openkerendo}
Given a module $M$, let $\openndo M$ be the subset   of all endomorphisms whose kernel is open. Then $\openndo M$ is a   two-sided ideal of $\ndo M$ consisting entirely of nilpotent endomorphisms.
\end{proposition}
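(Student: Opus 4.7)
My approach divides the claim into three sub-statements to be verified in order: (a) $\openndo M$ is closed under addition, (b) it absorbs pre- and post-composition on either side, and (c) each of its elements is nilpotent. The first two parts are formal consequences of the topological machinery developed in the previous section: intersections of open submodules are open and preimages of open submodules under morphisms are open (\Thm{T:cantop}), while any submodule of $M$ containing an open submodule is itself open by \Thm{T:submod} (the length is squeezed between $\len M$ and $\len M$). From these facts, the containment $\ker f \cap \ker g \sub \ker(f+g)$ settles addition, $\ker(h \circ f) \supseteq \ker f$ handles left composition, and $\ker(f \circ h) = h^{-1}(\ker f)$ handles right composition.

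The substantive assertion is nilpotence, and here I expect to need an actual idea. Given $f \in \openndo M$, the ideal property just established yields that $\ker f^n$ is open for every $n \geq 1$. The Noetherian hypothesis on $M$ then forces the ascending chain $\ker f \sub \ker f^2 \sub \dots$ to stabilize at some stage $N$, so that $\ker f^N = \ker f^{N+1}$. I would set $M_N := f^N(M)$ and aim to prove $M_N = 0$, which would give $f^N = 0$.

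The punchline is to combine two opposing uses of openness. On the one hand, stabilization of the kernel chain forces $f|_{M_N}$ to be injective: if $x = f^N(y)$ satisfies $f(x)=0$, then $y \in \ker f^{N+1} = \ker f^N$, hence $x = 0$; so the kernel of $f|_{M_N}$, which is $M_N \cap \ker f$, is zero. On the other hand, since $\ker f$ is open in $M$, the second assertion of \Thm{T:cantop} says that $M_N \cap \ker f$ is open \emph{in} $M_N$. Thus the zero submodule of $M_N$ is open in $M_N$, i.e.\ $\len{M_N} = \len 0 = 0$, which forces $M_N = 0$ as wanted.

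The only likely obstacle is purely bookkeeping: one must keep careful track of which ambient module each openness claim refers to, since an open submodule of $M$ need not remain open in a submodule in the naive sense. But \Thm{T:cantop} is tailored to exactly this kind of transfer, so no real difficulty should arise.
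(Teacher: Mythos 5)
Your proposal is correct and follows essentially the same route as the paper: the ideal property via $\ker f\cap\ker g\sub\ker(f+g)$, $\ker(h\circ f)\supseteq\ker f$, and $\ker(f\circ h)=h^{-1}(\ker f)$ together with \Thm{T:cantop}, and nilpotence via Noetherian stabilization of the kernel chain. The only (cosmetic) difference is the last step, where the paper intersects $\op{Im}(f^n)$ with the open, hence essential (\Cor{C:bigbg}), submodule $\ker(f^n)$, while you intersect $f^N(M)$ with $\ker f$ and invoke the second assertion of \Thm{T:cantop} to conclude that $0$ is open in $f^N(M)$; both are valid.
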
 
\begin{proof}
Suppose $a,b\in\mathfrak O:=\openndo M$ and $c\in\ndo M$, then the kernel of $a+b$ contains the open $\op{ker}(a)\cap\op{ker}(b)$ and hence is open itself. Likewise, the kernel of $ca$ contains the open $\op{ker}(a)$, and the kernel of $ac$ is the inverse image of $\op{ker}(a)$ under $c$, which is   open by continuity (\Thm{T:cantop}). This shows that $\mathfrak O$ is a two-sided ideal. 

Finally, some power $a^n$ satisfies $\op{ker}(a^n)=\op{ker}(a^{n+1})$ by Noetherianity, which implies $\op{ker}(a^n)\cap\op{Im}(a^n)=0$. Since $\op{ker}(a^n)$ is a fortiori open, whence essential by \Cor{C:bigbg}, the submodule $\op{Im}(a^n)$ must be zero, showing that $a$ is nilpotent.
\end{proof}

Note that the second part of the proof actually gives:

\begin{corollary}\label{C:opennil}
If an endomorphism has open kernel, then it is nilpotent.\qed
\end{corollary}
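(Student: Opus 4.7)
The plan is to extract the nilpotency argument already sketched in the final paragraph of \Prop{P:openkerendo} and present it as a standalone deduction. The key inputs are Noetherianity of $M$ (to get a stable kernel filtration), \Thm{T:cantop} (to propagate openness up the chain $\op{ker}(a)\sub\op{ker}(a^2)\sub\dots$), and \Cor{C:bigbg} (to turn openness into essentialness, thereby ruling out nonzero complements).

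First, let $a\in\ndo M$ have open kernel. Since $M$ is Noetherian, the ascending chain $\op{ker}(a)\sub\op{ker}(a^2)\sub\op{ker}(a^3)\sub\dots$ stabilizes, so some $n$ satisfies $\op{ker}(a^n)=\op{ker}(a^{n+1})$, and in fact $\op{ker}(a^n)=\op{ker}(a^{2n})$ by iteration. Next I would verify the standard consequence $\op{ker}(a^n)\cap\op{Im}(a^n)=0$: if $x=a^n(y)$ lies in $\op{ker}(a^n)$, then $a^{2n}(y)=0$, so $y\in\op{ker}(a^{2n})=\op{ker}(a^n)$, giving $x=0$.

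Now I would promote openness from $\op{ker}(a)$ to $\op{ker}(a^n)$. Because $\op{ker}(a)\sub\op{ker}(a^n)$ and $\len{\op{ker}(a)}=\len M$, \Thm{T:submod} (applied to the inclusions $\op{ker}(a)\sub\op{ker}(a^n)\sub M$) forces $\len{\op{ker}(a^n)}=\len M$ as well, so $\op{ker}(a^n)$ is open. By \Cor{C:bigbg}, an open submodule is essential, so any submodule intersecting it trivially must itself be zero. Applied to $\op{Im}(a^n)$, which we just showed meets $\op{ker}(a^n)$ trivially, this yields $\op{Im}(a^n)=0$, i.e.\ $a^n=0$, so $a$ is nilpotent.

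There is no real obstacle here since every ingredient is in place; the mild bookkeeping point is simply to observe that openness of $\op{ker}(a)$ transfers to $\op{ker}(a^n)$, which is automatic from \Thm{T:submod} (or equally from \Cor{C:openval}, since any submodule of $M$ containing an open submodule has the same fundamental cycle as $M$).
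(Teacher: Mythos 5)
Your proof is correct and is precisely the paper's argument: it extracts the second half of the proof of \Prop{P:openkerendo} (Noetherian stabilization of the kernel chain, $\op{ker}(a^n)\cap\op{Im}(a^n)=0$, openness passed up to $\op{ker}(a^n)$, then essentialness via \Cor{C:bigbg}). The paper even flags the corollary as exactly this observation, so no further commentary is needed.
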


\begin{theorem}\label{T:pingpong} 
Suppose $M$ is torsion-free over $\zet$.
If $M$ and $N$ have no associated primes in common, then there exists $k\in\nat$, such that for any choice of morphisms $f_i\colon M\to N$ and $g_i\colon N\to M$, with $i=\range 1k$, the composition $g_kf_k\cdots g_1f_1=0$.
\end{theorem}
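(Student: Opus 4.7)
The plan is to introduce $h_i := g_i\circ f_i\in\ndo M$ so that the composition in question becomes $\phi_k := h_k\circ\cdots\circ h_1$, and to show $\phi_k=0$ for a suitably uniform choice of $k$. The crucial observation is that each $h_i$ has \emph{open} kernel in $M$: since $M$ and $N$ share no associated primes, \Cor{C:openker} gives that every morphism $M\to N$ has open kernel, hence $\op{ker}(h_i)\supseteq\op{ker}(f_i)$ is open as well.

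Next, I would consider the partial composites $\phi_0:=\op{id}_M$ and $\phi_i := h_i\circ\phi_{i-1}$, and study the ascending chain
$$
0 = \op{ker}(\phi_0) \sub \op{ker}(\phi_1) \sub \op{ker}(\phi_2) \sub \cdots
$$
All of its terms lie in $\mathfrak{ker}_R(M,M)$, which by \Thm{T:kerlen} has finite length (explicitly bounded by $2^{\val M}$). Let $K$ be this length and set $k:=K+1$. Then, \emph{uniformly} in the choice of $f_i,g_i$, the chain $\op{ker}(\phi_0)\sub\cdots\sub\op{ker}(\phi_k)$ of $k+1$ terms cannot be strictly increasing throughout, so there exists some $j<k$ with $\op{ker}(\phi_j)=\op{ker}(\phi_{j+1})$.

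To finish, I would unwind this equality. Since $\phi_{j+1}=h_{j+1}\circ\phi_j$, the equality $\op{ker}(\phi_j)=\op{ker}(\phi_{j+1})$ is precisely the statement that $h_{j+1}$ is injective on $\op{Im}(\phi_j)$, i.e.\ that $\op{Im}(\phi_j)\cap\op{ker}(h_{j+1})=0$. But $\op{ker}(h_{j+1})$ is open in $M$, whence essential by \Cor{C:bigbg}; a submodule with trivial intersection with an essential submodule must itself be zero. Hence $\op{Im}(\phi_j)=0$, i.e.\ $\phi_j=0$, and \emph{a fortiori} $\phi_k=(h_k\circ\cdots\circ h_{j+1})\circ\phi_j=0$, which is exactly $g_kf_k\cdots g_1f_1=0$.

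The main obstacle is ensuring that the bound on $k$ is \emph{uniform} in the choice of morphisms---Noetherianity alone gives stabilization of the kernel chain, but the stabilization index could a priori depend on the specific $f_i,g_i$. This is precisely what \Thm{T:kerlen} rescues, by providing a bound that depends only on $M$. I note in passing that my argument does not seem to use the $\zet$-torsion-free hypothesis on $M$, which suggests either that this hypothesis is superfluous for the statement as phrased, or that the author has in mind a different, more refined approach.
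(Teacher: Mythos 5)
Your proof is correct, and it takes a genuinely different route from the paper's. The paper's argument shows that each $g_if_i$ belongs to $\openndo M$, the two-sided ideal of $\ndo M$ of endomorphisms with open kernel (\Prop{P:openkerendo}), all of whose elements are nilpotent, and then invokes the Nagata--Higman theorem to conclude that $\openndo M$ is a nilpotent ideal; the hypothesis that $M$ be torsion-free over $\zet$ enters only there, as \Rem{R:torsion} acknowledges. You avoid Nagata--Higman entirely: you bound the ascending chain of kernels of the partial composites uniformly via \Thm{T:kerlen}, and once two consecutive kernels agree you kill the image using the essentiality of open submodules (\Cor{C:bigbg}). None of your ingredients involves $\zet$-torsion, so your closing observation is right: your argument proves the statement without the torsion-freeness hypothesis, which is indeed an artifact of the Nagata--Higman route; it also yields an explicit bound ($k$ of order $2^{\val M}$ suffices, this being the bound on chains of kernels extracted from the proof of \Thm{T:kerlen}). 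The only step worth spelling out is why $\op{ker}(g_if_i)$ is open: it contains the open submodule $\op{ker}(f_i)$ supplied by \Cor{C:openker}, and any submodule sandwiched between an open submodule and $M$ has the same length as $M$, hence is open---which is exactly what you said. In short, your proof is both more elementary and slightly more general than the paper's.
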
 
\begin{proof}
By \Prop{P:openkerendo}, the ideal $\mathfrak O:=\openndo M$ of endomorphisms  with open kernel contains only nilpotent elements.    Hence, by   the Nagata-Higman Theorem (\cite{HigNag,NagNil}), the ideal $\mathfrak O$ is   nilpotent,   that is to say, $\mathfrak O^k=0$, for some $k$. Since the kernel of each $f_i$ is open  by \Cor{C:openker}, so is the kernel of each $g_if_i$, showing that $g_if_i\in\mathfrak O$, and the claim follows.
\end{proof} 

\begin{corollary}\label{C:dimpingpong}
Suppose $\mathbb Q\sub R$. 
Let $e$ be the maximal dimension of a common associated prime of $M$ and $N$. Then there is some $k$, such that for any choice of morphisms $f_i\colon M\to N$ and $g_i\colon N\to M$, with $i=\range 1k$, the image of the composition $g_kf_k\cdots g_1f_1$ has dimension at most $e$.
\end{corollary}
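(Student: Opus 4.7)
The plan is to reduce to Theorem~\ref{T:pingpong} by factoring out the dimension filtration $\fl d_e$ on each side. Write $\bar M := M/\fl d_e(M)$ and $\bar N := N/\fl d_e(N)$. Any morphism $\phi\colon X \to Y$ between $R$-modules carries elements of dimension $\leq e$ to elements of dimension $\leq e$, because the surjection $Rx \onto R\phi(x)$ yields $\ann Rx \sub \ann R{\phi(x)}$, whence $\dim R\phi(x) \leq \dim Rx$. In particular each $f_i$ induces $\bar f_i\colon \bar M \to \bar N$ and each $g_i$ induces $\bar g_i\colon \bar N \to \bar M$.

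I would next verify the hypotheses of Theorem~\ref{T:pingpong} for $(\bar M, \bar N)$. Since $\mathbb Q \sub R$, every $R$-module is a $\mathbb Q$-vector space and hence $\zet$-torsion-free. For the associated primes, Proposition~\ref{P:dimfil} gives $\len{\bar M} = \len M^{> e}$, and combining this with the cohomological rank formula of Theorem~\ref{T:cohrk} identifies $\op{Ass}(\bar M)$ with the set of associated primes of $M$ of dimension strictly greater than $e$; the same applies to $\bar N$. Since by hypothesis every common associated prime of $M$ and $N$ has dimension at most $e$, it follows that $\bar M$ and $\bar N$ share no associated primes.

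Applying Theorem~\ref{T:pingpong} to the pair $(\bar M, \bar N)$ then produces $k \in \nat$ such that $\bar g_k \bar f_k \cdots \bar g_1 \bar f_1 = 0$ for every choice of morphisms. Translating back to $M$, this says that the image of $g_k f_k \cdots g_1 f_1$ is contained in $\fl d_e(M)$, which has dimension at most $e$ by construction. The only nonformal step is the identification $\op{Ass}(\bar M) = \{\pr \in \op{Ass}(M) : \dim R/\pr > e\}$; the generic inclusions \eqref{eq:ass1} and \eqref{eq:ass2} supply one direction but not the other, and one must invoke the strong equilaterality of the dimension filtration (Proposition~\ref{P:dimfil}) together with Theorem~\ref{T:cohrk} to pin the set down exactly. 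Once that identification is in hand, the rest of the argument is a bookkeeping translation through the quotients.
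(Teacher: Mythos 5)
Your reduction is exactly the paper's: pass to $M/\fl d_e(M)$ and $N/\fl d_e(N)$, check that morphisms descend (they do, since they cannot increase the dimension of an element), apply \Thm{T:pingpong}, and conclude that the composition lands in $\fl d_e(M)$. The torsion-freeness check via $\mathbb Q\sub R$ is also fine.

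The one step whose justification does not go through as written is the identification $\op{Ass}(\bar M)=\{\pr\in\op{Ass}(M):\dim (R/\pr)>e\}$. The combination of \Prop{P:dimfil} and \Thm{T:cohrk} only computes $\len{\bar M}=\len M^{>e}$, and a length (equivalently, the image of the fundamental cycle in the ordinals) records just the dimensions and multiplicities of the associated primes, not the primes themselves. So this argument yields $\dim(R/\pr)>e$ for every $\pr\in\op{Ass}(\bar M)$, but it cannot yield the containment $\op{Ass}(\bar M)\sub\op{Ass}(M)$ --- and that containment is indispensable here, since a common associated prime of $\bar M$ and $\bar N$ must be recognized as a common associated prime of $M$ and $N$ before maximality of $e$ can be contradicted. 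The inclusions \eqref{eq:ass1} and \eqref{eq:ass2} point the wrong way for a quotient, as you note. The correct reference is the fact already quoted in the proof of \Prop{P:dimfil}, namely Schenzel's result \cite[Corollary 2.3]{ScheDimFil} that the associated primes of $M/\fl d_e(M)$ are \emph{precisely} the associated primes of $M$ of dimension $>e$; citing that instead of the length computation closes the gap, and the rest of your argument is then a faithful rendering of the paper's proof.
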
 
\begin{proof}
We take the convention that $e=-1$ if there are no common associated primes and we assign $-1$ to the dimension of the zero module. Hence the assertion is now just \Thm{T:pingpong} in  case $e=-1$. For $e\geq0$, let $M':=M/\fl d_e(M)$ and $N':=N/\fl d_e(N)$. Since $M'$ and $N'$ have no associated primes in common by maximality of $e$, we can find some $k$   as in \Thm{T:pingpong}. Choose $k$ many morphisms $f_i$ and $g_i$ as in the hypothesis, and let $h$ be their composition. Since morphisms cannot increase dimension, they induce morphisms between $M'$ and $N'$, and hence, by choice of $k$, the endomorphism on $M'$ induced by $h$ is zero. It follows that $h(M)\sub\fl d_e(M)$, as claimed.
\end{proof}

\begin{remark}\label{R:torsion}
The torsion restrictions above and below come from our application of the Nagata-Higman Theorem, which requires some form of torsion-freeness (see \cite[Remark 6.5]{SchBinEndo} for a further discussion). One can weaken these assumptions: for instance, in \Cor{C:dimpingpong}, we only need that $\pr\cap \zet=0$, for any associated prime $\pr$ of $M$.  
\end{remark} 

To extend \Thm{T:pingpong} to  several modules, let us say that an endomorphism $f\in\ndo M$    \emph{reflects through} a collection of modules $\mathcal N$, if we can factor $f$ as $M\to N\to M$, for each $N\in\mathcal N$.  Of course, any endomorphism reflects through $M$ itself. We can now prove:

\begin{theorem}\label{T:multipingpong}
Let $\mathcal N$ be a collection  of $R$-modules such that   no prime ideal   is associated  to every $N\in\mathcal N$. For any  module $M\in\mathcal N$  without $\zet$-torsion,  there exists $k\in\nat$, so that any product of $k$-many endomorphisms on $M$  reflecting through $\mathcal N$ is zero.
\end{theorem}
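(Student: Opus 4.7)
The plan is to reduce this to the argument of \Thm{T:pingpong} by showing that every endomorphism of $M$ reflecting through $\mathcal N$ has open kernel, and hence lies in the two-sided nilpotent ideal $\openndo M$ of \Prop{P:openkerendo}.

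First I would fix an endomorphism $f\colon M\to M$ reflecting through $\mathcal N$ and an associated prime $\pr$ of $M$. By the hypothesis on $\mathcal N$, there exists some $N=N_\pr\in\mathcal N$ with $\pr\notin\op{Ass}(N)$, whence $\hlc{N_\pr}0{\pr R_\pr}=0$. By assumption, $f$ factors as $f=g\after h$ with $h\colon M\to N$ and $g\colon N\to M$. Functoriality of zero-th local cohomology (after localizing at $\pr$, using \eqref{eq:lcloc}) forces $h_\pr$ to kill $\hlc{M_\pr}0{\pr R_\pr}$, so $\hlc{M_\pr}0{\pr R_\pr}\sub \ker(f)_\pr$. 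Combined with the reverse inclusion $\hlc{\ker(f)_\pr}0{\pr R_\pr}\sub \hlc{M_\pr}0{\pr R_\pr}$ (from left-exactness applied to $\ker(f)\sub M$), this gives equality. Since this holds for every $\pr\in\op{Ass}(M)$, \Cor{C:openval} ensures $\ker(f)$ is open in $M$; in particular $f\in\openndo M$. This essentially generalizes \Cor{C:openker} to the present setting.

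Once this is established, \Prop{P:openkerendo} tells us $\openndo M$ is a two-sided ideal of $\ndo M$ all of whose elements are nilpotent, and because $M$ has no $\zet$-torsion, neither does $\ndo M$. The Nagata-Higman Theorem then applies exactly as in the proof of \Thm{T:pingpong}, yielding some $k\in\nat$ with $\openndo M^k=0$. Any $k$-fold product of endomorphisms of $M$ reflecting through $\mathcal N$ is thus a product of $k$ elements of $\openndo M$, whence zero.

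The essential new ingredient is the first step: the hypothesis that no single prime is associated to \emph{every} member of $\mathcal N$ lets us, for each $\pr\in\op{Ass}(M)$ individually, pick a witness $N_\pr\in\mathcal N$ through which the reflection passes and against which to test openness at $\pr$. I expect the conversion of this global ``reflection'' hypothesis into the pointwise openness criterion of \Cor{C:openval} to be the main conceptual step; after it, the Nagata-Higman machinery used in \Thm{T:pingpong} transfers verbatim, with the two-sided ideal $\openndo M$ playing the same role here as there.
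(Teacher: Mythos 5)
Your proof is correct and follows essentially the same route as the paper's: reduce to showing that every endomorphism reflecting through $\mathcal N$ has open kernel (by testing, for each $\pr\in\op{Ass}(M)$, against a witness $N\in\mathcal N$ with $\pr\notin\op{Ass}(N)$ and using \Cor{C:openval}), then invoke \Prop{P:openkerendo} and Nagata--Higman exactly as in \Thm{T:pingpong}. The only cosmetic difference is that the paper routes the openness argument through the kernel of the first factor $M\to N$ via \Cor{C:openker}, whereas you argue directly that the localized first factor kills $\hlc{M_\pr}0{\pr R_\pr}$; these are the same computation.
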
  
\begin{proof}
As in the proof of \Thm{T:pingpong}, it suffices  to show that any  endomorphism $f\in\ndo M$ reflecting to $\mathcal N$  has open kernel. Let $K$ be its kernel and let $\pr$ be an associated prime of $M$. By assumption, there exists $N\in\mathcal N$ such that $\pr$ is not an associated prime of $N$. By definition, there  exists a factorization $f=hg$ with $g\colon M\to N$ and $h\colon N\to M$. Let $H$ be the kernel of $g$. By the argument in the proof of \Cor{C:openker} applied to $g$, we get $\hlc{H_\pr}0{\pr R_\pr}=\hlc{M_\pr}0{\pr R_\pr}$. Since $H\sub K$, this implies $\hlc{K_\pr}0{\pr R_\pr}=\hlc{M_\pr}0{\pr R_\pr}$, and since this holds for all associated primes $\pr$ of $M$, we proved   by \Cor{C:openval} that $K$ is open. 
\end{proof} 

If, instead, there are common associated primes, let $e$ be the maximum of their dimensions.  By the same argument as in the proof of \Cor{C:dimpingpong}, we may then conclude that the image of any product of $k$-many  endomorphisms reflecting through $\mathcal N$  has dimension at most $e$.

\section{Appendix: shuffle sums}\label{s:ssum}

 Recall that (standard) addition on ordinals is not commutative. We  will give three different but equivalent ways of defining a different, commutative addition
operation on the class of ordinals, which we temporarily will denote as $\ssum$, $\bsum$ and
$\tsum$. The sum $\ssum$ is also known as the \emph{natural (Hessenberg)
sum} and is often denoted $\#$. Recall our convention for scalar multiplication on the left (see \S\ref{s:Ord}). Every ordinal ${\opord a}$---we no longer restrict to those of finite degree---can be written as a sum
\begin{equation}\label{eq:CNF}
{\opord a}=a_n\omega^{\opord n_n}\ordsum \dots\ordsum a_1\omega^{\opord n_1}
\end{equation}
where the $\opord n_i$ (called the \emph{exponents}) form a strictly ascending
chain of ordinals, that is to say, $\opord n_1<\dots<\opord n_n$, and the $a_i$ (called the
\emph{coefficients})
are non-negative integers. This decomposition (in base $\omega$) is unique if 
we moreover require that all coefficients $a_i$ are non-zero, called
the \emph{Cantor normal form} (\emph{in base $\omega$}) of ${\opord a}$. 
If \eqref{eq:CNF} is in Cantor normal form, then we call the highest (respectively, lowest) occurring exponent, the \emph{degree} (respectively,   the \emph{order}) of ${\opord a}$ and we denote these respectively by $\op{deg}({\opord a}):=\opord n_n$ and  $\low{\opord a}:=\opord n_1$.
Note that ${\opord a}$ is a successor ordinal \iff\
$\low{\opord a}=0$.  

Given a second
ordinal ${\opord b}$, we may assume that
after possibly adding some more exponents, that it can also be written in the
form~\eqref{eq:CNF}, with coefficients $b_i\geq 0$ instead of the $a_i$. We now
define 
$$
{\opord a}\ssum{\opord b}:=
(a_n+b_n)\omega^{\opord n_n}\ordsum \dots\ordsum (a_1+b_1)\omega^{\opord n_1}.
$$
It follows that ${\opord a}\ssum{\opord b}$ is equal to ${\opord b}\ssum{\opord a}$ and is
greater than or equal to both ${\opord a}\ordsum {\opord b}$ and   ${\opord b}\ordsum {\opord a}$. For
instance if ${\opord a}=\omega\ordsum 1$ then ${\opord a}\ssum{\opord a}=2\omega\ordsum 2$
whereas ${\opord a}\ordsum {\opord a}=2\omega\ordsum 1$. In case both
ordinals are finite, ${\opord a}\ssum{\opord b}={\opord a}\ordsum {\opord b}$. It is easy to check
that we have the following \emph{finite distributivity property}:
\begin{equation}\label{eq:findist}
({\opord a}\ssum{\opord b})\ordsum 1=({\opord a}\ordsum 1)\ssum{\opord b}={\opord a}\ssum({\opord b}\ordsum 1).
\end{equation}
In fact, this follows from the more general property that
$({\opord a}\ssum{\opord b})\ordsum {\opord q}=({\opord a}\ordsum {\opord q})\ssum{\opord b}=
{\opord a}\ssum({\opord b}\ordsum {\opord q})$ for all ${\opord q}<
\omega^{o+1}$, where $o$ is the minimum of $\low{\opord a}$ and
$\low{\opord b}$.

For the second definition, we use transfinite induction on the pairs 
$({\opord a},{\opord b})$ ordered
lexicographically.\footnote{As alluded to above, the counterintuitive notation normally adopted for the resulting ordinal is   ${\opord b}{\opord a}$.}
Define ${\opord a} \bsum0:={\opord a}$ and $0\bsum{\opord b}:={\opord b}$ so that
we may assume ${\opord a},{\opord b}>0$.  If ${\opord a}$ is a
successor ordinal with predecessor   ${\opord a}'$, then we define ${\opord a}\bsum{\opord b}$ as
$({\opord a}'\bsum{\opord b})\ordsum 1$. Similarly, if ${\opord b}$ is a successor
ordinal with predecessor   ${\opord b}'$, then we define ${\opord a}\bsum{\opord b}$ as $({\opord a}\bsum{\opord b}')\ordsum 1$.
Note that by transfinite induction,  both definitions agree when both ${\opord a}$ and ${\opord b}$ are successor
ordinals, so that we have no ambiguity in defining this sum operation when at
least one of the components is a successor ordinal. So remains the case that
both are limit ordinals. If  $\low{\opord a}\leq\low{\opord b}$, then we  let ${\opord a} \bsum {\opord b}$
be equal to the supremum of the ${\opord d} \bsum {\opord b}$ for all
${\opord d} <{\opord a}$. In the remaining case, when  $\low{\opord a}>\low{\opord b}$,   we  let ${\opord a} \bsum {\opord b}$
be equal to the supremum of the ${\opord a} \bsum {\opord d} $ for all
${\opord d} <{\opord b}$. This concludes the definition of $\bsum$.
 
 Finally, define
${\opord a} \tsum {\opord b}$ as
the supremum of all sums ${\opord a}_1\ordsum {\opord b}_1\ordsum \dots\ordsum {\opord a}_n\ordsum {\opord b}_n$, where the
supremum is taken  over all $n$ and all decompositions
${\opord a}={\opord a}_1\ordsum \dots\ordsum {\opord a}_n$ and ${\opord b}={\opord b}_1\ordsum \dots\ordsum {\opord b}_n$, with ${\opord a}_i,{\opord b}_i$ ordinals. Loosely speaking,
${\opord a} \tsum {\opord b}$ is the
largest possible ordering one can obtain by   \emph{shuffling} pieces of 
${\opord a}$ and ${\opord b}$. Since we may take
${\opord a}_1=0={\opord b}_n$, one checks that
${\opord a} \tsum {\opord b}={\opord b} \tsum {\opord a}$.

\begin{theorem}\label{T:ssum}
For all ordinals ${\opord a},{\opord b}$ we have
${\opord a}\ssum{\opord b}={\opord a}\bsum{\opord b}={\opord a}\tsum{\opord b}$.
\end{theorem}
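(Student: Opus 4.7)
The plan is to establish the chain of equalities ${\opord a} \ssum {\opord b} = {\opord a} \bsum {\opord b} = {\opord a} \tsum {\opord b}$ by first showing that the coefficient-wise sum $\ssum$ agrees with the recursive sum $\bsum$, and then separately showing that it agrees with the shuffle supremum $\tsum$ via two matching inequalities. The second equality is the more delicate one, since $\tsum$ is defined as a supremum rather than recursively and the bound must be controlled uniformly over all decompositions.

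For ${\opord a} \ssum {\opord b} = {\opord a} \bsum {\opord b}$, I would proceed by transfinite induction on the pair $({\opord a}, {\opord b})$ ordered lexicographically. The base case in which one component is zero is immediate from the two definitions. For the successor case, writing ${\opord a} = {\opord a}' \ordsum 1$, the recursive definition gives ${\opord a} \bsum {\opord b} = ({\opord a}' \bsum {\opord b}) \ordsum 1$, which by induction equals $({\opord a}' \ssum {\opord b}) \ordsum 1$, and by the finite distributivity property \eqref{eq:findist} this equals $({\opord a}' \ordsum 1) \ssum {\opord b} = {\opord a} \ssum {\opord b}$; the symmetric successor case is identical. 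The limit case reduces to verifying the continuity of $\ssum$ in its first variable, namely ${\opord a} \ssum {\opord b} = \sup_{{\opord d} < {\opord a}} {\opord d} \ssum {\opord b}$ when $\low{{\opord a}} \leq \low{{\opord b}}$; this can be read off directly from the Cantor normal form using the coefficient-wise definition.

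For the lower bound ${\opord a} \ssum {\opord b} \leq {\opord a} \tsum {\opord b}$, I would exhibit an explicit decomposition realizing the shuffle sum. Expressing ${\opord a}$ and ${\opord b}$ in Cantor normal form using a common set of exponents ${\opord n}_1 < \cdots < {\opord n}_m$ (permitting zero coefficients), I split each ordinal into its individual Cantor terms and interleave them so that the two pieces at exponent ${\opord n}_m$ come first, followed by those at ${\opord n}_{m-1}$, and so on down to ${\opord n}_1$. The resulting concatenation is exactly $(a_m + b_m)\omega^{{\opord n}_m} \ordsum \cdots \ordsum (a_1 + b_1)\omega^{{\opord n}_1} = {\opord a} \ssum {\opord b}$, which is a valid shuffle and hence a lower bound for $\tsum$.

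The reverse inequality ${\opord a} \tsum {\opord b} \leq {\opord a} \ssum {\opord b}$ is the main obstacle, and I expect to prove it by induction on the number of pieces $n$ in the decomposition. The case $n = 1$ is just the known inequality ${\opord a} \ordsum {\opord b} \leq {\opord a} \ssum {\opord b}$. For the inductive step, splitting the shuffled sum ${\opord a}_1 \ordsum {\opord b}_1 \ordsum \cdots \ordsum {\opord a}_n \ordsum {\opord b}_n$ at the first two pieces and applying the inductive hypothesis to the tail reduces the claim to the general ordinal inequality
\[
{\opord u} \ordsum {\opord v} \ordsum ({\opord x} \ssum {\opord y}) \leq ({\opord u} \ordsum {\opord x}) \ssum ({\opord v} \ordsum {\opord y}),
\]
which I would verify by expanding both sides in Cantor normal form and comparing coefficients exponent by exponent, using the gobble rule that in a concatenation of single Cantor terms a term contributes at exponent ${\opord p}$ only when no subsequent term carries a strictly larger exponent. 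Combining the two directions yields ${\opord a} \tsum {\opord b} = {\opord a} \ssum {\opord b}$, completing the proof.
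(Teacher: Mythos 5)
Your proposal is correct, and it splits into a part that mirrors the paper and a part that takes a genuinely different route. The proof that $\opord a\ssum\opord b=\opord a\bsum\opord b$ (lexicographic transfinite induction, finite distributivity \eqref{eq:findist} at successors, continuity of $\ssum$ along the cofinal family $\opord a'\ordsum\opord q$ with $\opord q<\omega^{\low{\opord a}}$ at limits) is the paper's argument, as is the lower bound $\opord a\ssum\opord b\leq\opord a\tsum\opord b$ via the degree-decreasing interleaving of Cantor terms. Where you diverge is the upper bound: the paper proves $\opord a\tsum\opord b\leq\opord a\bsum\opord b$ by transfinite induction on $\opord a$, exploiting the recursive definition of $\bsum$ (peeling off the last element of $\opord a_n$ at successor stages, taking suprema over cofinal decompositions at limit stages), whereas you bound each fixed shuffle by ordinary induction on the number of pieces $n$, reducing to the inequality $\opord u\ordsum\opord v\ordsum(\opord x\ssum\opord y)\leq(\opord u\ordsum\opord x)\ssum(\opord v\ordsum\opord y)$. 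That inequality is true, and your Cantor-normal-form strategy does work, but the clean way to run it is to verify the stronger coefficient-wise domination $\aleq$ rather than a top-down lexicographic comparison: a coefficient of $\opord u$ at exponent $e$ survives on the left only if $e\geq\max(\deg{\opord v},\deg{\opord x},\deg{\opord y})\geq\deg{\opord x}$, hence it survives in $\opord u\ordsum\opord x$; a coefficient of $\opord v$ survives only if $e\geq\max(\deg{\opord x},\deg{\opord y})\geq\deg{\opord y}$, hence it survives in $\opord v\ordsum\opord y$; and $\opord x,\opord y$ contribute in full to both sides. Since $\aleq$ implies $\leq$, this closes the inductive step. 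Your route buys independence from $\bsum$ in that direction (only finite induction plus normal-form arithmetic) and isolates a reusable superadditivity lemma; the paper's route avoids the bookkeeping by recycling the recursion already built into $\bsum$, at the price of another transfinite induction with successor and limit cases.
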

\begin{proof}
Let ${\opord c}:={\opord a}\ssum{\opord b}$, $\bar{\opord c}:={\opord a} \bsum {\opord b}$ and
$\tilde{\opord c}:={\opord a} \tsum {\opord b}$. 
We first prove ${\opord c}=\bar{\opord c}$ by induction on  
$({\opord a},{\opord b})$ (in the lexicographical order). Since the case ${\opord a}=0$ or ${\opord b}=0$ is trivial,  we may take
${\opord a},{\opord b}>0$. If   ${\opord a}$ is a successor ordinal with predecessor   ${\opord a}'$, then
\begin{equation*}
\bar{\opord c}=({\opord a}' \bsum {\opord b})\ordsum 1=({\opord a}'\ssum{\opord b})\ordsum 1={\opord a}\ssum{\opord b}={\opord c},
\end{equation*} 
where the first equality is by definition, the second by
induction and the third by  the finite distributivity
property~\eqref{eq:findist}.  Replacing the role of ${\opord a}$ and ${\opord b}$, the 
same argument can be used to treat the case when ${\opord b}$ is a successor
ordinal. So we may assume that both are limit ordinals. There are again two 
cases, namely $\low{\opord a}\leq\low{\opord b}$ and $\low{\opord a}>\low{\opord b}$. By
symmetry, the
argument for the second case is similar as for the first, so we will only give
the details for the first case. Write ${\opord a}$ as ${\opord a}'\ordsum \omega^o$ where
$o:=\low{\opord a}$. 
By definition,
$\bar{\opord c}$ is the supremum of all ${\opord d}  \bsum {\opord b}$ with
${\opord d} <{\opord a}$. A cofinal subset of such ${\opord d} $
are the ones of the form ${\opord a}'\ordsum {\opord q}$  
with $0<{\opord q}<\omega^o$, so that $\bar{\opord c}$ is the
supremum of all $({\opord a}'\ordsum {\opord q}) \bsum {\opord b}$ for $0<{\opord q}<\omega^o$.
By induction, $\bar{\opord c}$ is the supremum of all 
\begin{equation}\label{eq:g'}
({\opord a}'\ordsum {\opord q})\ssum{\opord b}=({\opord a}'\ssum{\opord b}) \ordsum {\opord q},
\end{equation} 
where the equality holds because $o\leq \low{\opord b}$.
Taking the supremum of the ordinals in \eqref{eq:g'} for ${\opord q}<\omega^o$, we
get that $\bar{\opord c}=({\opord a}'\ssum{\opord b})\ordsum \omega^o$. Using the remark following
\eqref{eq:findist} one checks that this is
just $({\opord a}'\ordsum \omega^o)\ssum{\opord b}={\opord a}\ssum{\opord b}={\opord c}$. 

The inequality ${\opord c}\leq\tilde{\opord c}$ is clear using the shuffle of the
terms in the Cantor normal forms~\eqref{eq:CNF} for ${\opord a}$ and ${\opord b}$.
To finish the proof,
we therefore need to show, by induction on ${\opord a}$, that
\begin{equation}\label{eq:g3}
{\opord a}_1\ordsum {\opord b}_1\ordsum \dots\ordsum {\opord a}_n\ordsum {\opord b}_n\leq\bar{\opord c},
\end{equation}
for all decompositions ${\opord a}={\opord a}_1\ordsum \dots\ordsum {\opord a}_n$ and
${\opord b}={\opord b}_1\ordsum \dots\ordsum {\opord b}_n$. Since $\tsum$ is commutative, we may assume
$\low{\opord a}\leq\low{\opord b}$ and, moreover,   that ${\opord a}_n>0$.  Suppose
first that ${\opord a}$ is a successor ordinal with predecessor   ${\opord a}'$. In particular,
${\opord a}_n$ is also a successor ordinal, with predecessor, say,   ${\opord a}_n'$. By
definition, $\bar{\opord c}=({\opord a}' \bsum {\opord b})\ordsum 1$. Using
the decomposition ${\opord a}'={\opord a}_1\ordsum \dots\ordsum {\opord a}_{n-1}\ordsum {\opord a}_n'$ and
induction, we get that 
${\opord a}_1\ordsum {\opord b}_1\ordsum \dots\ordsum {\opord b}_n\ordsum {\opord a}_n'\leq {\opord a}' \bsum {\opord b}$. Taking
successors of both ordinals then yields \eqref{eq:g3}. Hence suppose ${\opord a}$
is a limit ordinal. Let ${\opord q}<{\opord a}_n$ and apply the induction to each 
${\opord d} :={\opord a}_1\ordsum \dots\ordsum {\opord a}_{n-1}\ordsum {\opord q}$, to get
$$
{\opord a}_1\ordsum {\opord b}_1\ordsum \dots\ordsum {\opord b}_{n-1}\ordsum {\opord q}\ordsum {\opord b}_n\leq {\opord d}  \bsum  {\opord b}.
$$
Taking suprema of both sides then yields inequality  \eqref{eq:g3}. 
\end{proof}

We will denote this new sum simply by $\ssum$ and refer to
it as the \emph{shuffle sum} of two ordinals, in view of its third equivalent
form.

\subsection*{Product Orders}\label{s:prod}
The \emph{product} of two  partially ordered sets $P$ and $Q$ is defined to be
the Cartesian product $P\times Q$ ordered by the rule $(a,b)\leq (a',b')$
\iff\ $a\leq a'$ and $b\leq b'$. 
The map $(a,b)\mapsto (b,a)$
is an order-preserving bijection between $P\times Q$ and $Q\times P$. It is
easy to check that  if both $P$ and $Q$ are partial well-orders, then so is $P\times
Q$. 
%
%

\begin{theorem}[Product Formula]\label{T:prod}
Given partial well-orders $P$ and $Q$, we have an equality $\len{P\times Q}= \len P\ssum\len Q$.
\end{theorem}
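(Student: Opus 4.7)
The plan is to establish the pointwise identity
$$
\hdmod{P\times Q}{(a,b)} = \hdmod P a \ssum \hdmod Q b
$$
for every $(a,b)\in P\times Q$ by transfinite induction, and then pass to suprema. Since $\ssum$ is strictly monotone and Scott-continuous in each argument (visible from the recursive $\bsum$-definition in the appendix), the supremum on the right factors as $\len P\ssum\len Q$, while the one on the left is $\len{P\times Q}$ by definition.

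For the upper bound $\hdmod{P\times Q}{(a,b)}\leq \hdmod P a\ssum\hdmod Q b$, I would induct on $\opord s:=\hdmod{P\times Q}{(a,b)}$. At a successor stage $\opord s=\opord s'\ordsum 1$, pick $(a',b')<(a,b)$ witnessing the rank, so inductively $\hdmod P{a'}\ssum\hdmod Q{b'}\geq\opord s'$. Strictness $(a',b')<(a,b)$ forces $a'<a$ or $b'<b$; in either case the definition of height gives $\hdmod P a\geq\hdmod P{a'}\ordsum 1$ (resp.\ the analogous statement in $Q$), and the finite distributivity \eqref{eq:findist} for $\ssum$ yields
$$
\hdmod P a\ssum\hdmod Q b\geq \hdmod P{a'}\ssum\hdmod Q{b'}\ordsum 1\geq \opord s'\ordsum 1=\opord s.
$$
The limit case is immediate by taking suprema over witnesses.

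For the lower bound $\hdmod{P\times Q}{(a,b)}\geq \hdmod P a\ssum\hdmod Q b$, I would induct on the pair $(\hdmod P a,\hdmod Q b)$. If $\hdmod P a$ has predecessor $\opord p'$, pick $a'<a$ with $\hdmod P{a'}\geq\opord p'$; by induction $\hdmod{P\times Q}{(a',b)}\geq\opord p'\ssum\hdmod Q b$, and since $(a',b)<(a,b)$, another application of \eqref{eq:findist} gives
$$
\hdmod{P\times Q}{(a,b)}\geq (\opord p'\ssum\hdmod Q b)\ordsum 1=\hdmod P a\ssum \hdmod Q b.
$$
The case where $\hdmod Q b$ is a successor is symmetric. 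When both ranks are limits, for each $\opord p'<\hdmod P a$ I pick $a'\leq a$ with $\hdmod P{a'}\geq\opord p'$ and invoke induction to conclude $\hdmod{P\times Q}{(a,b)}\geq\opord p'\ssum\hdmod Q b$; continuity of $\ssum$ in its first argument then supplies the desired bound.

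The main obstacle lies in the limit case of the lower bound, and it is here that the shuffle characterization $\ssum=\tsum$ established in \Thm{T:ssum} pays off: conceptually, a chain witnessing $\hdmod{P\times Q}{(a,b)}$ must interleave independent descents in the $P$- and $Q$-coordinates, which is precisely what $\tsum$ encodes. This is also the reason one cannot replace $\ssum$ by the non-commutative ordinal sum $\ordsum$ in the statement, since the product $P\times Q$ is symmetric in its two factors while $\ordsum$ distinguishes them. Once the pointwise identity is in place, the passage to suprema is routine by the separate Scott-continuity of $\ssum$ and the fact that $\sup_{(a,b)}$ splits into $\sup_a$ and $\sup_b$.
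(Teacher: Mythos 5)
Your overall strategy---establish the pointwise identity $\hdmod{P\times Q}{(a,b)}=\hdmod Pa\ssum\hdmod Qb$ by transfinite induction and then pass to suprema---is exactly the paper's, and your successor cases are correct. The gap is your repeated appeal to ``Scott-continuity of $\ssum$ in each argument'': the shuffle sum is \emph{not} continuous in an arbitrary argument. For instance,
$$
\sup_{{\opord d}<\omega^2}\bigl({\opord d}\ssum\omega\bigr)\;=\;\sup_{n,m}\bigl((n+1)\omega\ordsum m\bigr)\;=\;\omega^2,
$$
whereas $\omega^2\ssum\omega=\omega^2\ordsum\omega$. Continuity in the first argument holds only when $\low{\opord p}\leq\low{\opord q}$; this is precisely why the recursive definition of $\bsum$ in the appendix branches at limit stages according to whether $\low{\opord a}\leq\low{\opord b}$ or not, rather than taking the supremum over either argument indiscriminately, so the appendix does not support your claim. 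Concretely, in your both-limits case of the lower bound, always descending in the $P$-coordinate only yields $\hdmod{P\times Q}{(a,b)}\geq\sup_{{\opord p}'<{\opord p}}({\opord p}'\ssum{\opord q})$, which can be strictly smaller than ${\opord p}\ssum{\opord q}$ (take ${\opord p}=\omega^2$, ${\opord q}=\omega$ above). The repair is the one the paper makes: use the order isomorphism $P\times Q\iso Q\times P$ to assume $\low{\hdmod Pa}\leq\low{\hdmod Qb}$ before descending in the first coordinate, so that the supremum you compute is literally the limit clause of $\bsum$, which equals $\ssum$ by \Thm{T:ssum}.

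Two smaller points. In the limit case you must choose $a'<a$ strictly (which is possible, since $\hdmod Pa\geq{\opord p}'\ssum 1$), otherwise the inductive hypothesis does not apply to $(a',b)$. And the final ``passage to suprema'' suffers from the same continuity defect when $P$ or $Q$ has no greatest element: with $P=\omega^2$ and $Q=\omega\ordsum 1$ one gets $\sup_{(a,b)}(\hdmod Pa\ssum\hdmod Qb)=\omega^2$ while $\len P\ssum\len Q=\omega^2\ordsum\omega$. In the paper's applications both factors are \sch{}s of modules and hence have greatest elements, so the pointwise identity at $(\top_P,\top_Q)$ already gives the conclusion; you should either add that hypothesis or restrict the final step accordingly.
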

\begin{proof}
We prove the more general fact that   
\begin{equation}\label{eq:odimprod}
\hd{a,b}=\hd a\ssum\hd b 
\end{equation}
 for all
$a\in P$ and $b\in Q$, from which the assertion follows by taking suprema over
all elements in $P$ and $Q$. Note that we have not written superscripts   to denote on which ordered set
the \hdim\ is calculated since this is clear
from the context. To prove \eqref{eq:odimprod}, we may assume by
transfinite induction that it holds for all pairs $(a',b')$ strictly below $(a,b)$.
Put ${\opord a}:=\hd a$, ${\opord b}:=\hd b$ and ${\opord c}:=\hd{a,b}$. Since
$\hd{a,b}=\hd{b,a}$, via the isomorphism $P\times Q\iso Q\times P$, we may
assume that $\low{\opord a}\leq\low{\opord b}$ whenever this assumption is required
(namely, when dealing with limit ordinals). 
We
start with proving the
inequality ${\opord a}\ssum{\opord b}\leq{\opord c}$. If   ${\opord a}={\opord b}=0$ then  
$a$ and $b$ are minimal elements in respectively $P$ and $Q$, whence so is $(a,b)$ in $P\times Q$, that is to say,  ${\opord c}=0$. So we may
assume,
after perhaps exchanging $P$ with $Q$ that ${\opord a}>0$. Suppose ${\opord a}$ is a successor
ordinal  with predecessor ${\opord a}'$. Hence there exists $a'<a$  in $P$ 
with $\hd{a'}={\opord a}'$.
By induction, $\hd{a',b}={\opord a}'\ssum{\opord b}$ and hence 
${\opord c}=\hd{a,b}$ is at least $({\opord a}'\ssum{\opord b})\ssum 1={\opord a}\ssum{\opord b}$,
where the last equality follows from Theorem~\ref{T:ssum}.
If ${\opord a}$
is a limit ordinal, then there exists for each ${\opord d} <{\opord a}$ an element
$a_{\opord d} <a$ of \hdim\ ${\opord d} $. By
induction
$\hd{a_{\opord d} ,b}={\opord d} \ssum{\opord b}$ and hence $\hd{a,b}$ is at least
${\opord a}\ssum {\opord b}$ by  Theorem~\ref{T:ssum}.  This concludes the
proof that ${\opord a}\ssum{\opord b}\leq {\opord c}$. 

For the converse inequality, assume first that ${\opord c}$ is a
successor ordinal  with predecessor ${\opord c}'$. By definition, there exists $(a',b')<(a,b)$ in $P\times Q$
of \hdim\   ${\opord c}'$.
By induction, ${\opord c}'=\hd{a',b'}=\hd {a'}\ssum\hd{b'}$, from which we get ${\opord c}\leq {\opord a}\ssum{\opord b}$. A similar argument can be
used to treat the limit case and the details are left to the reader.
\end{proof}

%
%

%
%

\begin{thebibliography}{10}

\bibitem{AschPong}
M.~Aschenbrenner and W.Y. Pong, \emph{Orderings of monomial ideals}, Fund.
  Math. \textbf{181} (2004), no.~1, 27--74.

\bibitem{AtiMac}
M.~Atiyah and G.~Macdonald, \emph{Introduction to commutative algebra},
  Addison-Wesley Publishing Co., Reading, Mass., 1969.

\bibitem{BH}
W.~Bruns and J.~Herzog, \emph{Cohen-{M}acaulay rings}, Cambridge University
  Press, Cambridge, 1993.

\bibitem{Eis}
D.~Eisenbud, \emph{Commutative algebra with a view toward algebraic geometry},
  Graduate Texts in Mathematics, vol. 150, Springer-Verlag, New York, 1995.

\bibitem{GabCat}
Pierre Gabriel, \emph{Des cat\'egories ab\'eliennes}, Bull. Soc. Math. France
  \textbf{90} (1962), 323--448.

\bibitem{GabRen}
Pierre Gabriel and Rudolf Rentschler, \emph{Sur la dimension des anneaux et
  ensembles ordonn\'es}, C. R. Acad. Sci. Paris \textbf{265} (1967),
  A712--A715.

\bibitem{GarDec}
Steven Garavaglia, \emph{Decomposition of totally transcendental modules}, J.
  Symbolic Logic \textbf{45} (1980), no.~1, 155--164.

\bibitem{Hart}
R.~Hartshorne, \emph{Algebraic geometry}, Springer-Verlag, New York, 1977.

\bibitem{HigNag}
Graham Higman, \emph{On a conjecture of {N}agata}, Proc. Cambridge Philos. Soc.
  \textbf{52} (1956), 1--4.

\bibitem{KnuSur}
D.~E. Knuth, \emph{Surreal numbers}, Addison-Wesley Publishing Co., Reading,
  Mass.-London-Amsterdam, 1974.

\bibitem{Mats}
H.~Matsumura, \emph{Commutative ring theory}, Cambridge University Press,
  Cambridge, 1986.

\bibitem{McCRob}
J.~C. McConnell and J.~C. Robson, \emph{Noncommutative {N}oetherian rings},
  revised ed., Graduate Studies in Mathematics, vol.~30, American Mathematical
  Society, Providence, RI, 2001, With the cooperation of L. W. Small.

\bibitem{Miy}
Takehiko Miyata, \emph{Note on direct summands of modules}, J. Math. Kyoto
  Univ. \textbf{7} (1967), 65--69.

\bibitem{NagNil}
Masayoshi Nagata, \emph{On the nilpotency of nil-algebras}, J. Math. Soc. Japan
  \textbf{4} (1952), 296--301.

\bibitem{NasOys}
C.~N{\u{a}}st{\u{a}}sescu and F.~Van Oystaeyen, \emph{Dimensions of ring
  theory}, Mathematics and its Applications, vol.~36, D. Reidel Publishing Co.,
  Dordrecht, 1987.

\bibitem{ScheDimFil}
Peter Schenzel, \emph{On the dimension filtration and {C}ohen-{M}acaulay
  filtered modules}, Commutative algebra and algebraic geometry ({F}errara),
  Lecture Notes in Pure and Appl. Math., vol. 206, Dekker, New York, 1999,
  pp.~245--264.

\bibitem{SchBinEndo}
Hans Schoutens, \emph{Binary modules and their endomorphisms}, 2012.

\bibitem{SchCond}
\bysame, \emph{Condense modules and the {G}oldie dimension}, 2012.

\bibitem{SchUlBook}
\bysame, \emph{The use of ultraproducts in commutative algebra}, Lecture Notes
  in Mathematics, vol. 1999, Springer-Verlag, 2010.

\bibitem{Sim}
H.~Simmons, \emph{The {G}abriel dimension and {C}antor-{B}endixson rank of a
  ring}, Bull. London Math. Soc. \textbf{20} (1988), no.~1, 16--22.

\bibitem{StrExt}
Janet Striuli, \emph{On extensions of modules}, J. Algebra \textbf{285} (2005),
  no.~1, 383--398.

\bibitem{VasFlat}
W.~Vasconcelos, \emph{On finitely generated flat modules}, Trans. Amer. Math.
  Soc. \textbf{138} (1969), 505--512.

\end{thebibliography}
\providecommand{\bysame}{\leavevmode\hbox to3em{\hrulefill}\thinspace}
\providecommand{\MR}{\relax\ifhmode\unskip\space\fi MR }
\providecommand{\MRhref}[2]{%
  \href{http://www.ams.org/mathscinet-getitem?mr=#1}{#2}
}
\providecommand{\href}[2]{#2}

\end{document}